\newtheorem{proposition}{Proposition}[section]
\newtheorem{theorem}[proposition]{Theorem}
\newtheorem{corollary}[proposition]{Corollary}
\newtheorem{lemma}[proposition]{Lemma}
\theoremstyle{definition}
\newtheorem{definition}[proposition]{Definition}
\newtheorem{remark}[proposition]{Remark}
\numberwithin{equation}{section}
\newcommand{\dive}{\mathrm{div}}
\newcommand{\dd}{\mathrm{d}}
\newcommand{\e}{\mathrm{e}}
\newcommand{\R}{\mathbb{R}}
\newcommand{\V}{\mathbf{V}}
\def\H{\mathbf{H}}
\newcommand{\Q}{\mathcal{Q}}
\newcommand{\A}{\mathcal{A}}
\newcommand{\M}{\mathcal{M}}
\newcommand{\la}{\langle}
\newcommand{\ra}{\rangle}
\newcommand{\de}{\partial}
\newcommand{\eps}{\varepsilon}
\newcommand{\vv}{\boldsymbol{v}}
\newcommand{\uu}{\boldsymbol{u}}
\newcommand{\kk}{\boldsymbol{k}}
\newcommand{\tF}{\widetilde{F}}
\newcommand{\T}{t_1}
\newcommand{\hv}{\hat{\boldsymbol{v}}}
\newcommand{\he}{\hat{e}}
\newcommand{\hq}{\hat{q}}
\newcommand{\hT}{\widehat{T}}
\newcommand{\ho}{\hat{\omega}}
\def\bs{\boldsymbol}
\def\ddt{\frac{\mathrm{d}}{\mathrm{d}t}}
\newcommand{\abs}[1]{\lvert #1 \rvert }
\newcommand{\norm}[1]{\| #1 \| }
\title[Primitive equations of the atmosphere in presence of vapor saturation]
{The primitive equations of the atmosphere in presence of vapor saturation}
\author[M. Coti Zelati, A. Huang, I. Kukavica, R. Temam, M. Ziane]
{Michele Coti Zelati, Aimin Huang, Igor Kukavica, \\ Roger Temam, and Mohammed Ziane}
\date{\today}
\address{Indiana University - Institute for Scientific Computing and Applied Mathematics 
\newline\indent
Rawles Hall, Bloomington, IN 47405, USA}
\email{micotize@indiana.edu {\rm (M.\ Coti Zelati)}}
\email{aimhuang@indiana.edu {\rm (A.\ Huang)}}
\email{temam@indiana.edu {\rm (R.\ Temam)}}
\address{University of Southern California - Department of Mathematics 
\newline\indent
3620 S. Vermont Ave., KAP 108, Los Angeles, California 90089, USA}
\email{kukavica@usc.edu {\rm (I.\ Kukavica)}}
\email{ziane@math.usc.edu {\rm (M.\ Ziane)}}
\subjclass[2010]{35Q35, 35B65, 35Q86, 34A12}
\keywords{Primitive equations, phase-change, saturation, global strong solutions}
\begin{document}

\begin{abstract}
A modification of the classical primitive equations of the atmosphere is considered
in order to take into account important phase transition phenomena due to air saturation and condensation. We 
provide a mathematical formulation of the problem that appears to be new in this setting, by making use of
differential inclusions and variational inequalities, and which allows to develop a rather complete theory for the
solutions to what turns out to be a nonlinearly coupled system of non-smooth partial differential equations. Specifically we prove global
existence of quasi-strong and strong solutions, along with uniqueness results and maximum principles of physical interest.
\end{abstract}

\maketitle

\tableofcontents

\section{Introduction}
\noindent The primitive equations (PEs) represent the classical model for the study of climate
and weather prediction,  describing the motion of the atmosphere when the hydrostatic
assumption is enforced \cites{G,Hal71,HW80, TM05, Ric88}. To the best of our knowledge, their mathematical 
study was initiated by Lions, Temam and Wang in \cites{LTW92a, LTW92b, LTW93}. This research field has 
quickly developed, now attracting a large number of researchers over the last two decades. 

According to  the classical theory of atmospheric dynamics \cites{G,Hal71,HW80}, for dry adiabatic
motions a complete system of equations consists of the vector equation of motion
\begin{equation}\label{eq:con1}
\rho \frac{\dd \mathbf{V_3}}{\dd t}+\rho \Omega \times \mathbf{V_3}+\nabla_3 p+\rho\mathbf{g}=D,
\end{equation}
the equation of state
\begin{equation}\label{eq:con2}
p=R\rho T,
\end{equation}
the equation of continuity
\begin{equation}\label{eq:con3}
\frac{\dd \rho}{\dd t}+\rho\dive_3 \mathbf{V_3}=0,
\end{equation}
and the first law of thermodynamics
\begin{equation}\label{eq:con4}
\frac{\dd T}{\dd t}-\frac{RT}{c_p p}\frac{\dd p}{\dd t}=Q_T.
\end{equation}
Above, the following quantities play an important role:
\begin{itemize}
	\item $\mathbf{V_3}=(\vv,w)=$ velocity of the wind, where $\vv=(u,v)$ is the horizontal velocity;
	\item $\rho=$ density, $p=$ pressure, $T=$ temperature;
	\item $\mathbf{g}=(0,0,-g)=$ the gravity, $0<c_p=$ specific heat, $0<R=$ specific gas constant;
	\item $\Omega=$ angular velocity of the earth;
	\item $D,Q_T=$ dissipation terms.
\end{itemize}
In their general three dimensional form, these equations are too complicated to be treatable
both from the theoretical and the computational side. The most common physical simplification is due
to the observation that the vertical dimension is usually much smaller than the horizontal
one. 
Based
on the hydrostatic approximation \cite{AG},
this leads to the derivation of the \emph{primitive equations}
\cites{G,Ped87,LTW92a,LTW92b,LTW93,PTZ08}.
The hydrostatic assumption introduces the equation
\begin{equation}\label{eq:hydro}
\frac{\de p}{\de z}=-\rho g,
\end{equation}
which corresponds to the simplified form of the equation of conservation of 
momentum in the vertical direction. In its turn, \eqref{eq:hydro} shows that $p$
is a decreasing function of $z$, which allows the use of $p$ as the vertical coordinate. In the $(x,y,p)$ system, 
the equation \eqref{eq:con3} now becomes
\begin{equation}\label{eq:divfree}
\frac{\de u}{\de x}+\frac{\de v}{\de y}+\frac{\de \omega}{\de p}=0,
\end{equation}
where $\omega$ (different from $w$) is now the vertical velocity of the wind in the $(x,y,p)$ system, defined by
\begin{equation}\label{eq:ommm}
\omega=\frac{\dd p}{\dd t},
\end{equation}
where the total derivative in the $(x,y,p)$-coordinates reads
\begin{equation}\label{eq1.e1}
\frac{\dd}{\dd t}= \frac{\de}{\de t}+\vv\cdot \nabla +\omega \frac{\de}{\de p}.
\end{equation}
The mathematical 
form of the equations \eqref{eq:con1}, \eqref{eq:con4}, and \eqref{eq:divfree} in the $(x,y,p)$ system makes these
equations very similar to the Navier-Stokes equations of incompressible fluids \cites{S01,T95,T01} and make their mathematical
theory feasible; see e.g.~\cite{LTW92a} and the review article \cite{PTZ08} which contains a large list
of mathematical references.

When moisture is included, an equation for the conservation of water must be added, which
is the case in e.g.~\cites{GH1,GH2,LTW92a,PTZ08}. However, in these works, the equation of conservation
of moisture, corresponding to the variable $q$, $0\leq q\leq 1$, is simply an equation of transport which does not account 
for the changes of phase, concentration/evaporation, and rain. In this work we will mainly be 
concerned with
modifications coming from this change of model, with the aim of providing a rigorous mathematical 
framework
for the study of such systems of equations. In earlier works \cites{CFTT, CT12}, two of the authors have studied the equations of the
humid atmosphere with saturation by making the simplifying assumption that the velocity
of the air $\uu=(\vv,\omega)$ is given. In this article we address the whole problem, by coupling 
the methods developed in \cites{CFTT, CT12} with the tools developed to study the
three dimensional primitive equations \cite{CT06, Kob1, Kob2, KPRZ, KZ07,KZ08}.

\subsection{The introduction of moisture}

The equation of water vapor may be obtained in a similar manner to the derivation of the continuity
equation. Calling $q$ the specific humidity, namely, the mass of water vapor in a unit mass of moist air,
we can write its conservation equation \cites{Hal71,HW80} as
\begin{equation}\label{eq:qqqq}
\frac{\dd q}{\dd t}= \frac{S}{\rho}+D_q,
\end{equation}
where $D_q$ is a suitable form of dissipation accounting for horizontal and vertical diffusion and 
$S$ are sources or sinks of water vapor per unit volume per unit time. This equation extends the 
classical conservation equation from e.g.~\cites{HW80,Hal71} by adding the dissipation term $D_q$.
In general, a sink of water vapor arises from condensation or evaporation from saturated air, in which case
\begin{equation}\label{eq:sssss}
\frac{S}{\rho}=\frac{\dd q_s}{\dd t},
\end{equation}
where $q_s$ is the saturation humidity, otherwise called the saturation concentration. In general,
$q_s$ will be either constant or a nonlinear function of the temperature $T$, and will satisfy 
\begin{equation}\label{eq:qssss}
\frac{\dd q_s}{\dd t}= \frac{\omega}{p}F(T),
\end{equation}
for some function $F$ whose expression will be discussed in detail in Section \ref{sub:nonlinsec}.
The common coordinate system to study the primitive equations of the atmosphere 
is, as we said,  the $(x,y,p)$ coordinate system, where the pressure $p$ replaces the vertical
coordinate $z$. 
Expanding equation \eqref{eq:qqqq}, in view of \eqref{eq:sssss}-\eqref{eq:qssss} 
and \eqref{eq:ommm}, we obtain that $q$ undergoes the evolution equation
\begin{equation}\label{eq:qqqq2}
\frac{\de q}{\de t}+ \vv \cdot \nabla q + \omega \frac{\de q}{\de p}= \delta\, \frac{\omega}{p}\, F(T) + D_q,
\end{equation}
where $\delta$ is defined as
$$
\delta=\begin{cases}
1,\quad \text{if } \omega<0 \text{ and } q> q_s,\\
0,\quad \text{otherwise}.
\end{cases}
$$
Roughly speaking, the contribution by $F(T)$ is assumed to apply only during condensation ($q> q_s$) and
requires a negative $\omega$ (upward motion). Viewing $q_s$ as a threshold, this precisely describes the change of phase
which the specific humidity $q$ obeys. The classical terminology is the following:
\begin{itemize}
\item $q<q_s$: under-saturated regime;
\item $q=q_s$: saturation/condensation;
\item $q>q_s$: over-saturated regime.
\end{itemize}
From the point of view of partial differential equations, \eqref{eq:qqqq2} introduces the mathematical 
difficulty of dealing with a nonlinear and discontinuous right hand side, making more challenging the proof of suitable
well-posedness results. 

\subsection{A modified law of thermodynamics}
The expression for $\dd q_s/\dd t$ in  \eqref{eq:qssss} may also be used in the thermodynamic equation
$$
-L \frac{\dd q_s}{\dd t}=c_p \frac{\dd T}{\dd t}- \frac{RT}{p}\frac{\dd p}{\dd t},
$$
in order to properly modify the conservation equation \eqref{eq:con4}. We then have
\begin{equation}\label{eq:newthermo}
\frac{\dd T}{\dd t}-\frac{RT}{c_pp}\frac{\dd p}{\dd t}=-\delta L \frac{\omega}{c_pp} F(T)+ D_T,
\end{equation}
where $\delta$ is the same as above. The first term in the right hand side of \eqref{eq:newthermo} corresponds
to the energy effect of condensation or evaporation.
Here, $D_T$ is another form of dissipation accounting, e.g.~for
conduction or turbulence. Written in the $(x,y,p)$ coordinate system, the above equation reads
\begin{equation}\label{eq:qqqq3}
\frac{\de T}{\de t}+ \vv \cdot \nabla T + \omega \frac{\de T}{\de p}-\frac{RT}{c_pp}\omega=-\delta L \frac{\omega}{c_pp} F(T)+ D_T.
\end{equation}

\subsection{Structure of the article}
In the next section we rephrase, following \cites{CFTT,CT12},  the system as a set of differential inclusions, for which we provide suitable definitions 
of quasi-strong and strong solutions in Section \ref{sec:mathset}. Section \ref{sec:approx111}  is devoted to the construction 
of a regularized approximated problem which is useful to prove the local-in-time existence of quasi-strong solutions in Section \ref{sec:localquasi}.
Local and global existence of strong solutions is proved in Section \ref{sec5}. The question of uniqueness of quasi-strong and 
strong solutions is addressed in Section \ref{sec:uniq}, and it is very much related to the maximum principle results
derived in Section \ref{sec:maxprinc}. Finally, we conclude the article with the Appendix \ref{sec-kz-idea}, in which a modification
of a result of \cites{KZ07,KZ08} is considered.

\section{The primitive equations as differential inclusions}

\noindent We start by defining our problem in a physical bounded domain, imposing boundary
conditions  and specifying the precise form of the dissipation terms appearing in \eqref{eq:con1}, \eqref{eq:qqqq2} and
\eqref{eq:qqqq3}. The equations for temperature and specific humidity are rephrased as differential inclusions,
in order to make the discontinuities in the right-hand-sides of  \eqref{eq:qqqq2} and
\eqref{eq:qqqq3} treatable from the mathematical viewpoint.

\subsection{The basic model}
Let $\M$ be a cylinder in $\mathbb{R}^3$ of the form
$$
\M = \{(x,y,p)\,:\,(x,y)\in \M',\, p\in (p_0,p_1)\},
$$
where $\M'$ is a smooth bounded domain in $\mathbb{R}^2$, and $p_0<p_1$ are positive constants.
We denote by $\nabla, \Delta$ and $\dive$ the two-dimensional gradient, Laplacian and divergence operators, respectively, that is
$$
\nabla = (\de_x,\de_y),\quad \Delta= \de_x^2 + \de_y^2,\quad \dive=\de_x + \de_y.
$$
Analogously, the symbols $\nabla_3,\Delta_3$  and $\dive_3$  indicate their three-dimensional versions.
The (viscous) primitive equations of the atmosphere read
\begin{align}
\label{eq1.1.1}&\frac{\de \vv}{\de t} + (\vv\cdot\nabla) \vv + 
\omega \frac{\de \vv}{\de p} + f \kk\times \vv + \nabla \Phi  + \A_{\vv} \vv = \boldsymbol S_{\vv},\\
\label{eq1.1.2}&\frac{\de\Phi}{\de p} + \frac{RT}{p}=0,\\
\label{eq1.1.3}&\text{div}\, \vv + \frac{\de\omega}{\de p}=0,\\
\label{eq1.1.4}&\frac{\de T}{\de t} + \vv\cdot\nabla T + \omega \frac{\de T}{\de p} - \frac{R T}{c_p p}\omega +\A_T T =S_T,\\
\label{eq1.1.5}&\frac{\de q}{\de t} + \vv\cdot\nabla q + \omega \frac{\de q}{\de p} + \A_q q = S_q,\\
\label{eq1.1.6}&p=R\rho T.
\end{align}
Here, $\uu=(\vv,\omega)$ is the three-dimensional velocity vector, $\rho, p, T$ 
are the density, pressure and the temperature distribution, and $q$ is the specific humidity, measuring 
the amount of vapor in the air. 
In \eqref{eq1.1.1}, $f$ is the Coriolis force parameter and $\kk$ 
is the unit vector in the direction of the poles (from south to north). As the equations we consider here are the viscous PEs of the atmosphere, the symbols $\A_{\vv}$, $\A_T$ and $\A_q$ 
denote diffusion operators, with suitable eddy viscosity coefficients:
$$
\A_{\star} = - \mu_{\star}\Delta  - \nu_{\star}\frac{\de}{\de p}\left( \left(\frac{gp}{R\overline T}\right)^2 \frac{\de }{\de p}\right),
$$
where $\star$ can either be $\vv$, $T$, or $q$, and $\overline T=\overline T(p)$ is a given average 
temperature over the isobar with pressure $p$, for which we assume the existence
of two positive constants $\overline T_*$ and $\overline T^*$ such that
\begin{equation}\label{eq:tstar1}
\overline T_*\leq \overline T(p)\leq \overline T^*.
\end{equation}
Concerning the right hand sides, $S_T$ corresponds to the sum of the heating of the sun and the heat added or removed by condensation or evaporation; $S_q$ represents the amount of water added or removed by condensation or evaporation. Finally, $\boldsymbol S_{\vv}$, which vanishes in reality, is a forcing term usually added for mathematical generality  and to possibly handle nonhomogenous boundary conditions.

\subsection{The primitive equations with saturation}

When studying the climate dynamics around the equator, the humidity equation,
describing the ratio of vapor in the air, becomes very important and it is necessary 
to account for the possible saturation of vapor leading to condensation (clouds) and 
rain. In the recent works \cites{CT12,CFTT}, the authors considered 
the coupling of the humidity equation and the temperature equation 
with a \emph{given} velocity vector field $\uu=(\vv,\omega)$. These two differential inclusions replacing \eqref{eq1.1.4} and \eqref{eq1.1.5} read (see \cites{Hal71,HW80}):
\begin{align}
\label{eq1.1.7}&\frac{\de T}{\de t} + \vv\cdot\nabla T + \omega\frac{\de T}{\de p}-\frac{R}{c_p p}\omega T+ 
\A_T T  \in \frac{L}{c_p p}\omega^{-}H(q-q_s)F(T) +S_T,\\
\label{eq1.1.8}&\frac{\de q}{\de t} + \vv\cdot\nabla q + \omega \frac{\de q}{\de p} + \A_q q \in -\frac{1}{p}\omega^{-}H(q-q_s)F(T)+S_q.
\end{align}
Here $\omega^{-}=\max\{-\omega,0\}$ is the negative part of $\omega$, 
and $H(q-q_s)$ is the Heaviside multivalued function, i.e.,
\begin{equation}
H(r)=\begin{cases}
0, &r<0,\\
[0,1],\hspace{6pt}&r=0,\\
1,&r>0.
\end{cases}
\end{equation}
The papers \cites{CT12,CFTT} provide the existence, 
uniqueness, and maximum principles of  weak solutions 
to the equations \eqref{eq1.1.7}--\eqref{eq1.1.8}, with the velocity
 $\uu$ given in some suitable Sobolev spaces.

In this article, we consider the full nonlinear PEs in the presence of vapor saturation.
Specifically, we replace  equations \eqref{eq1.1.4}--\eqref{eq1.1.5} 
with the differential inclusions  \eqref{eq1.1.7}--\eqref{eq1.1.8} in order to take the saturation phenomenon into account.
More precisely, the full system under study reads now
\begin{align}
&\label{eq1.1.1b}\frac{\de \vv}{\de t} + (\vv\cdot\nabla)\vv + \omega \frac{\de \vv}{\de p} + f \kk\times \vv + \nabla \Phi  + \A_{\vv} \vv = \boldsymbol S_{\vv},\\
&\label{eq1.1.2b}\frac{\de\Phi}{\de p} + \frac{RT}{p}=0,\\
&\label{eq1.1.3b}\text{div}\, \vv + \frac{\de\omega}{\de p}=0,\\
&\label{eq1.1.4b}\frac{\de T}{\de t} + \vv\cdot\nabla T  + \omega\frac{\de T}{\de p}-\frac{R}{c_p p}\omega T+ \A_T T  \in \frac{L}{c_p p}\omega^{-}H(q-q_s)F(T)+S_T,\\
&\label{eq1.1.5b}\frac{\de q}{\de t} + \vv\cdot\nabla q + \omega \frac{\de q}{\de p} + \A_q q \in -\frac{1}{p}\omega^{-}H(q-q_s)F(T)+S_q,\\
&\label{eq1.1.6b}p=R\rho T.
\end{align}
Our aim here is to study the coupled system \eqref{eq1.1.1b}--\eqref{eq1.1.6b} and prove the existence and uniqueness of the global (quasi)-strong solutions defined in Sections \ref{def:quasi-strong} and \ref{def:strong}, for a \emph{constant} saturation concentration $q_s\in(0,1)$.

\subsection{Nonlinear terms}\label{sub:nonlinsec}
An important difference between our system and the one considered in the classical references 
\cites{HW80,Ped87} (see also \cites{LTW92a, LTW95}) are the nonlinear terms. Firstly, the temperature equation  \eqref{eq1.1.4b}
involves the nonlinear (and possibly anti-dissipative) term 
$$
-\frac{R}{c_p p}\omega T
$$
on the left hand side. This requires some care from the very beginning, as shown in \cite{ET01}.

Secondly, in \eqref{eq1.1.4b} and \eqref{eq1.1.5b}, the nonlinearity $F\colon\mathbb{R}\to\mathbb{R}$ 
is obtained by setting \cites{Hal71,RY89}
\begin{equation}\label{eq:nonlinF}
F(\xi)= q_s \xi\left(\frac{LR-c_pR_v\xi}{c_pR_v\xi^2+q_s L^2}\right),
\end{equation}
where $c_p$, $L$ and $R$ are the positive constant described above, and $R_v$ is equal to the gas constant for water vapor. 
By a direct calculation, we see that $F$ is a globally Lipschitz bounded function, namely
\begin{equation}\label{N1}
|F(\xi_1)-F(\xi_2)|\leq c_F|\xi_1-\xi_2|, \qquad \forall \xi_1,\xi_2\in \mathbb{R},
\end{equation}
and
\begin{equation}\label{N2}
|F(\xi)|\leq C_F, \qquad \forall \xi\in \mathbb{R}.
\end{equation}
Since $F(0)=0$, we also obtain from \eqref{N1} that
\begin{equation}\label{N3}
|F(\xi)|\leq c_F|\xi|, \qquad \forall \xi\in \mathbb{R}.
\end{equation}
Moreover,
\begin{equation}\label{N4}
F(\xi_0)=0\quad\text{for}\quad \xi_0=\frac{L R}{c_pR_v}.
\end{equation}
Therefore,
\begin{equation}\label{eq:nonbound}
F(\xi)\geq 0 \quad \Leftrightarrow \quad \xi\in [0,\xi_0].
\end{equation}

\subsection{Boundary and initial conditions}
The boundary of $\M$ is partitioned into three parts as $\de\M = \Gamma_i\cup\Gamma_b\cup\Gamma_\ell$, where
\begin{equation}
\begin{aligned}
&\Gamma_i = \{(x,y,p)\in\overline\M\,:\, p=p_1\},\\
&\Gamma_u= \{(x,y,p)\in\overline\M\,:\, p=p_0\},\\
&\Gamma_\ell = \{(x,y,p)\in\overline\M\,:\, (x,y)\in \de \M',\,p_0\leq p\leq p_1\}.
\end{aligned}
\end{equation}
We supplement system \eqref{eq1.1.1b}--\eqref{eq1.1.6b}
with the following physically relevant (homogeneous) boundary conditions: 
wind-driven on the top surface and free-slip and non-heat flux on the 
side walls and the bottom (see e.g.~\cites{CT06,CT12}):
\begin{alignat}{4}
\label{eq1.7}\text{on }\Gamma_i:\quad& \frac{\de \vv}{\de p} = -\frac{\alpha_{\vv}}{\nu_{\vv}}\vv,\quad & &\omega=0, 
\quad & & \frac{\de T}{\de p} = -\frac{\alpha_T}{\nu_T}T,\qquad & &\frac{\de q}{\de p}=-\frac{\alpha_q}{\nu_q} q;\\
\label{eq1.8}\text{on }\Gamma_u:\quad &\frac{\de \vv}{\de p} = 0,\quad & &\omega=0,\quad & &\frac{\de T}{\de p} =0,\quad & &\frac{\de q}{\de p}=0;\\
\label{eq1.9}\text{on }\Gamma_\ell:\quad &\vv\cdot \boldsymbol n=0,\qquad & &\frac{\de(\vv\cdot\boldsymbol\tau)}{\de \boldsymbol n}=0,\qquad & &\frac{\de T}{\de \boldsymbol n}=0,\quad & &\frac{\de q}{\de \boldsymbol n} =0,
\end{alignat}
where $\boldsymbol n$ and $\boldsymbol \tau$ are the unit normal and tangent vectors to $\Gamma_\ell$ respectively and $\alpha_T,\alpha_q>0$ are given positive constants.
In addition, we supplement system \eqref{eq1.1.1b}--\eqref{eq1.1.6b} with the initial conditions
\begin{equation}\label{eq1.10}
\begin{aligned}
&\vv(x,y,p,0) = \vv_0(x,y,p),\\
&T(x,y,p,0) = T_0(x,y,p),\\
&q(x,y,p,0)=q_0(x,y,p).
\end{aligned}
\end{equation}

\begin{remark}
In limited area atmospheric models, the free-slip boundary conditions for the lateral boundary in \eqref{eq1.9} are more appropriate to avoid artificial boundary layer (see 
e.g.~\cite{TS82}). We could also consider the no-slip boundary conditions or periodic boundary conditions for the lateral boundary and the main results in this article still hold for these boundary conditions.
\end{remark}

\begin{remark}
A relevant non-homogeneous version of the above boundary conditions \eqref{eq1.7}--\eqref{eq1.9} can be be written as follows:
\begin{alignat}{4}
\label{eq1.7b}\text{on }\Gamma_i:\ & \frac{\de \vv}{\de p} = \frac{\alpha_{\vv}}{\nu_{\vv}}((p_1-p_0)\vv_*-\vv),\quad & &\omega=0, 
 & & \frac{\de T}{\de p} = \frac{\alpha_T}{\nu_T}(T_*-T),\quad & &\frac{\de q}{\de p}=\frac{\alpha_q}{\nu_q}(q_*-q);\\
\label{eq1.8b}\text{on }\Gamma_u:\ &\frac{\de \vv}{\de p} = 0, & &\omega=0,\quad& &\frac{\de T}{\de p} =0,\quad & &\frac{\de q}{\de p}=0;\\
\label{eq1.9b}\text{on }\Gamma_\ell:\ &\vv\cdot \boldsymbol n=0,  \qquad\qquad\frac{\de(\vv\cdot\boldsymbol\tau)}{\de \boldsymbol n}=0, & &\quad & &\frac{\de T}{\de \boldsymbol n}=0,\quad & &\frac{\de q}{\de \boldsymbol n} =0.
\end{alignat}
In this setting, $\vv_*(x,y)$ is the wind stress on the ocean surface and 
$T_*(x,y)$ and $q_*(x,y)$ are typical temperature and specific humidity 
distributions at the bottom surface of the atmosphere, respectively. Due to the boundary conditions \eqref{eq1.7b}--\eqref{eq1.9b}, it is natural to assume that $\vv_*$, $T_*$ and $q_*$ satisfy the boundary compatibility conditions
\begin{equation}\label{eq1.8.1}
\begin{split}
	\vv_*\cdot \boldsymbol n&=0,\quad \frac{\de (\vv_*\cdot\boldsymbol\tau)}{\de \boldsymbol n}=0,\qquad \text{ on }\de\M',\\
	\frac{\de T_*}{\de \boldsymbol n}&=0,\quad \frac{\de q_*}{\de \boldsymbol n}=0,\qquad\quad \text{ on }\de\M'.
\end{split}
\end{equation}
As observed in \cite[Remark~1]{CT06}, if we make the following variables change
\begin{equation}
	\begin{cases}
		\tilde{\boldsymbol v} =\boldsymbol v- (p_1-p_0)\vv_*,\\
		\widetilde T = T - T_*,\\
		\tilde q = q- q_*,
	\end{cases}
\end{equation}
then $(\tilde{\boldsymbol v},\widetilde T,\tilde q)$ satisfies the homogeneous boundary conditions \eqref{eq1.7}--\eqref{eq1.9} thanks to the compatibility boundary conditions \eqref{eq1.8.1}, and the extra terms involving $(\vv_*,T_*,q_*)$ appearing in the new set of equations similar to those \eqref{eq1.1.1b}--\eqref{eq1.1.6b} are lower order terms and easy to handle. Hence for simplicity and without loss of generality we assume that $\vv_* =T_* = q_*=0$, 
corresponding to our choice \eqref{eq1.7}--\eqref{eq1.9}. Therefore, the results presented here are still valid for general $(\vv_*,T_*,q_*)$ provided these quantities  are smooth enough.
\end{remark}

\section{Mathematical setting}\label{sec:mathset}
\noindent The weak formulation of the system \eqref{eq1.1.1b}-\eqref{eq1.1.6b} along with its boundary condition
requires the introduction of a rather large set of functional analytic tools, including a variational inequality
to represent the Heaviside graph as the subdifferential of a convex functional. This section is therefore devoted to
making more precise the mathematical formulation of the equations under study.

\subsection{The potential temperature}\label{sub:pottemp}
In order to eliminate the demanding nonlinear term 
$$
\frac{R}{p}\omega T
$$ 
in the 
temperature equation \eqref{eq1.1.4b}, we introduce the 
so-called potential temperature
\begin{equation}\label{eq2.1.1}
\theta=T\left(\frac{p_0}{p}\right)^{R/c_p}.
\end{equation}
Using $\omega=\dd p/\dd t$ where $\dd/\dd t$ is defined in \eqref{eq1.e1}, a direct computation shows that
\begin{equation}\label{eq2.1.3}
\frac{\dd T}{\dd t} -\frac{R\omega}{c_pp}T=\left(\frac{p}{p_0}\right)^{R/c_p}\frac{\dd \theta}{\dd t}.
\end{equation}
In this way, the equation \eqref{eq1.1.4b} becomes
\begin{equation}\label{eq2.1.3b}
\frac{\de \theta}{\de t}+\vv \cdot \nabla \theta +\omega \frac{\de \theta}{\de p}+\A_\theta \theta
 \in \frac{L}{c_p p} \left(\frac{p_0}{p}\right)^{R/c_p}\omega^- H(q-q_s) \widetilde F (p,\theta) + S_\theta,
\end{equation}
where the operator $\A_T$ is replaced by
$$
\A_\theta=-\mu_T\Delta-\nu_T\left(\frac{p_0}{p}\right)^{R/c_p}\frac{\de}{\de p}\left(\frac{gp}{R\overline T}\right)^2\frac{\de}{\de p}\left(\frac{p}{p_0}\right)^{R/c_p},
$$
and the nonlinear term $F$ takes the form
\begin{align*}
\tF(p,\theta)= F\left[ \left(\frac{p}{p_0}\right)^{R/c_p}\theta\right] .
\end{align*}
Note that, thanks to the properties \eqref{N1}--\eqref{N3} of $F$ and the fact that $p\in[p_0,p_1]$, we have
\begin{equation}\label{eq:tF}
|\tF(p,\xi)|\leq C_{\tF},\qquad |\tF(p,\xi_1)-\tF(p,\xi_2)|\leq c_{\tF}|\xi_1-\xi_2|, \qquad  |\tF(p,\xi)|\leq c_{\tF}|\xi|,
\end{equation}
where the last inequality follows from the fact that $\tF(p,0)=F(0)=0$ for every $p$. 
In the same way, the boundary conditions become
\begin{equation}
\label{eq2.1.12} \left(\nu_T\frac{\de \theta}{\de p}+ \alpha_\theta \theta\right)\big|_{\Gamma_i} =0,\qquad  
\left(\frac{\de \theta}{\de p}+\frac{R}{c_pp_0}\theta\right)\big|_{\Gamma_u} =0,\qquad  \frac{\de \theta}{\de \boldsymbol n}\big|_{\Gamma_\ell}=0,
\end{equation}
where 
$$
\alpha_\theta=\alpha_T + \nu_T\frac{R}{c_pp_1},
$$
and the initial condition turns into
\begin{equation}
\label{eq2.1.14}\theta(x,y,p,0) = \theta_0(x,y,p)=T_0(x,y,p)\left(\frac{p_0}{p}\right)^{R/c_p}.
\end{equation}

\begin{remark}
It is clear that, since $0<p_0\leq p\leq p_1$, the properties that we will derive for $\theta$ will be translated into analogous properties for the temperature $T$.
\end{remark}

\subsection{New formulation}
Integrating \eqref{eq1.1.2b} in the $p$-direction gives
\begin{equation}\label{eq2.1.4}
\begin{split}
\Phi(x,y,p,t)&=\Phi_s(x,y,t) + \int_p^{p_1} \frac{R}{p'}T(x,y,p',t)\dd p'\\	
&=\Phi_s(x,y,t) + \int_p^{p_1} \frac{R}{p'}\left(\frac{p'}{p_0}\right)^{R/c_p}\theta(x,y,p',t)\dd p',
\end{split}
\end{equation}
where $\Phi_s=\Phi_s(x,y,t)$ is the pressure at the bottom of the atmosphere when $p=p_1$.
In the same manner, using the boundary conditions \eqref{eq1.8}, we infer from \eqref{eq1.1.3b} that $\omega=\omega(\vv)$ satisfies
\begin{equation}\label{eq2.1.5}
\omega(x,y,p,t) = \int_{p}^{p_1} \nabla\cdot \vv(x,y,p',t)\dd p',
\end{equation}
and the following constraint must be satisfied:
\begin{equation}\label{eq2.1.6}
\int_{p_0}^{p_1} \nabla\cdot \vv(x,y,p,t)\dd p =  \nabla\cdot\int_{p_0}^{p_1} \vv(x,y,p,t)\dd p =0.
\end{equation}
We aim to write the fully nonlinear PEs in the \emph{prognostic} variables, namely in
$(\vv, \theta, q)$. 
The other variables $T,\rho,\Phi,\omega$ can be determined by 
\eqref{eq1.1.6b}, \eqref{eq2.1.1}, \eqref{eq2.1.4}, and \eqref{eq2.1.5}, and  they 
are called the \emph{diagnostic} variables. Defining
\begin{equation}\label{eq:DDDDD}
D(\omega, \theta,q)=\frac{1}{p}\omega^{-}H(q-q_s)\tF (p,\theta),
\end{equation}
the PEs in the \emph{prognostic} variables $(\vv, \theta, q)$ read
\begin{align}
\label{eq2.1.7b}&\frac{\de \vv}{\de t} + \vv\cdot \nabla\vv + \omega \frac{\de \vv}{\de p} + f \kk\times \vv + \nabla \Phi_s 
+\nabla \int_p^{p_1} \frac{R}{p'}\left(\frac{p'}{p_0}\right)^{R/c_p}\theta \dd p'  + \A_{\vv} \vv = \boldsymbol S_{\vv},\\
\label{eq2.1.8b}&\frac{\de \theta}{\de t}+\vv \cdot \nabla \theta +\omega \frac{\de \theta}{\de p}+\A_\theta \theta
 \in \frac{L}{c_p } \left(\frac{p_0}{p}\right)^{R/c_p}D(\omega, \theta,q)+S_\theta,\\
\label{eq2.1.9b}&\frac{\de q}{\de t} + \vv\cdot\nabla q + \omega \frac{\de q}{\de p} + \A_q q \in 
-D(\omega, \theta,q)+S_q,
\end{align}
with the boundary conditions 
\begin{alignat}{3}
\label{eq2.1.10b}&\left(\nu_{\vv}\frac{\de \vv}{\de p} + \alpha_{\vv}\vv\right)\big|_{\Gamma_i} = 0,\quad & & \frac{\de \vv}{\de p}\big|_{\Gamma_u} = 0,\quad & & \vv\cdot \boldsymbol n\big|_{\Gamma_\ell}=\frac{\de (\vv\cdot\boldsymbol\tau)}{\de \boldsymbol n}\big|_{\Gamma_\ell}=0,\\
\label{eq2.1.11b} &\left(\nu_T\frac{\de \theta}{\de p}+ \alpha_\theta \theta\right)\big|_{\Gamma_i} =0,\qquad & & \left(\frac{\de \theta}{\de p}+\frac{R}{c_pp_0}\theta\right)\big|_{\Gamma_u} =0,\qquad & & \frac{\de \theta}{\de \boldsymbol n}\big|_{\Gamma_\ell}=0,\\
\label{eq2.1.12b}&\left(\nu_q\frac{\de q}{\de p}+\alpha_q q\right)\big|_{\Gamma_i}=0,\quad & & \frac{\de q}{\de p}\big|_{\Gamma_u}=0,\quad  & & \frac{\de q}{\de \boldsymbol n}\big|_{\Gamma_\ell} =0,
\end{alignat}
and the initial conditions
\begin{align}
\label{eq2.1.13b}\vv(x,y,p,0) &= \vv_0(x,y,p),\\
\label{eq2.1.14b}\theta(x,y,p,0) &= \theta_0(x,y,p),\\
\label{eq2.1.15b}q(x,y,p,0)&=q_0(x,y,p).
\end{align}

\subsection{Function spaces}\label{subsec2.3fs}
Here and throughout this article, we will not 
distinguish the notations for vector and scalar function spaces 
whenever they are self-evident from the context.
Denote by $H^s=H^s(\M)$ the classical Sobolev 
spaces of order $s$ on $\M$, and by $L^p=L^p(\M)$  ($1\leq p\leq\infty$) 
the classical $L^p$-Lebesgue space with norm $\|\cdot\|_{L^p}$. The only
exceptions will be made for the space $L^2$, whose norm will be written
with the single bar $|\cdot|$ and the scalar product as $(\cdot,\cdot)$, and for the space $H^1$,
whose scalar product is defined as
$$
((\varphi,\tilde\varphi))=(\nabla \varphi,\nabla \tilde\varphi)+ \int_\M \left(\frac{gp}{R\overline T}\right)^2\de_p\varphi\,\de_p\tilde\varphi\,\dd\M 
+\int_{\Gamma_i}\left(\frac{gp_1}{R\overline T}\right)^2 \varphi\,\tilde\varphi\,\dd\Gamma_i.
$$
and its norm denoted by 
$$
\|\varphi\|=((\varphi,\varphi))^{1/2}.
$$
Using the generalized Poincar\'e inequality (see e.g.~\cite[pp.~49-50]{Tem97}), the norm $\|\cdot\|$
is equivalent to the $H^1$-norm.
Regarding the velocity field $\vv$, we introduce the space
$$
\mathcal{V} = 
\big\{ \vv\in C^\infty(\M;\R^2)\,:\, \nabla\cdot\int_{p_0}^{p_1} \vv(x,y,p^\prime)\dd p^\prime=0,\,\vv\text{ satisfies }\eqref{eq2.1.10b}\big\},
$$
along with the $L^2$ and $H^1$-like spaces
\begin{align*}
&\H=\text{ The closure of }\mathcal{V}\text{ with respect to the norm of } (L^2)^2,\\
&\V=\text{ The closure of }\mathcal{V}\text{ with respect to the norm of } (H^1)^2.
\end{align*}
Due to the boundary conditions we consider, the space $\V$ is endowed with the scalar product
$$
((\vv,\tilde\vv))=(\nabla \vv,\nabla \tilde\vv)+ \int_\M \left(\frac{gp}{R\overline T}\right)^2\de_p\vv\cdot\de_p\tilde\vv\,\dd\M 
+\int_{\Gamma_i}\left(\frac{gp_1}{R\overline T}\right)^2 \vv\cdot\tilde\vv\,\dd\Gamma_i.
$$
Again, we will not differentiate the notation of norms and scalar products 
between spaces of vector-valued functions and spaces of scalar functions. Therefore, $((\cdot,\cdot))$ and $\|\cdot\|$
will denote the scalar product and the norm in $\V$ as well. The spaces of interest for the triplet
$(\vv,T,q)$ are therefore
$$
H=\H\times L^2\times L^2 \qquad\text{and}\qquad V=\V\times H^1\times H^1.
$$
Also, we shall make use of the space 
$$
W=V\cap (H^2)^4,
$$
when dealing with regularity of solutions.  We then have the following Gelfand-Lions inclusions
$$
W\subset V\subset H\subset V'\subset W',
$$
with continuous injections and each space being dense in the next one. 

\subsection{Bilinear and trilinear forms}
Having in mind the boundary conditions \eqref{eq2.1.10b}--\eqref{eq2.1.12b}, we observe the following:
if $(\vv,\theta,q), (\tilde\vv,\tilde\theta,\tilde q)\in V$, then integration by parts yields 
$$
\la\A_{\vv}\vv,\tilde{\vv}\ra=\mu_{\vv} (\nabla \vv,\nabla \tilde\vv)+ 
\nu_{\vv}\int_\M \left(\frac{gp}{R\overline T}\right)^2\de_p\vv\cdot\de_p\tilde\vv\dd\M 
+\alpha_{\vv}\int_{\Gamma_i}\left(\frac{gp_1}{R\overline T}\right)^2 \vv\cdot\tilde\vv\dd\Gamma_i,
$$
and
$$
\int_\M\nabla\Phi_s(x,y)\cdot\vv\,\dd\M=0.
$$
Similarly,
\begin{align*}
\la \A_\theta \theta,\tilde\theta\ra&= \int_\M \mu_T \nabla\theta\cdot\nabla\tilde\theta+\nu_T \left(\frac{gp}{R\overline T}\right)^2\de_p \theta\de_p\tilde\theta\,\dd\M + \alpha_T\int_{\Gamma_i} \left(\frac{gp_1}{R\overline T}\right)^2\theta\tilde\theta\,\dd\Gamma_i \\
&\quad- \nu_T\int_\M \left(\frac{g}{c_p\overline T}\right)^2\theta\tilde\theta\,\dd\M
 +\frac{\nu_T}{c_p}\int_\M \frac{g^2p}{R\overline{T}^2}\Big(\theta\de_p\tilde\theta-\tilde\theta\de_p\theta\Big)\, \dd\M,
\end{align*}
 and
 $$
\la \A_q q,\tilde q\ra=\mu_q (\nabla q,\nabla \tilde q)
+\nu_q \int_\M \left(\frac{gp}{R\overline T}\right)^2 \de_p q\,\de_p\tilde q\dd\M+\alpha_q\int_{\Gamma_i}\left(\frac{gp_1}{R\overline T}\right)^2q\tilde q\dd\Gamma_i.
$$
For $(\vv,\theta,q),(\tilde\vv,\tilde\theta,\tilde q), (\vv^\#,\theta^\#,q^\#)\in V$, we define the bilinear and trilinear forms as follows:
\begin{align*}
&a_{\vv}(\vv,\tilde{\vv}) =\mu_{\vv} (\nabla \vv,\nabla \tilde\vv)+ 
\nu_{\vv}\int_\M \left(\frac{gp}{R\overline T}\right)^2\de_p\vv\cdot\de_p\tilde\vv\dd\M 
+\alpha_{\vv}\int_{\Gamma_i}\left(\frac{gp_1}{R\overline T}\right)^2 \vv\cdot\tilde\vv\dd\Gamma_i,\\
&a_T(\theta,\tilde\theta) =  \mu_T (\nabla\theta,\nabla\tilde\theta)+\nu_T \int_\M\left(\frac{gp}{R\overline T}\right)^2\de_p \theta\de_p\tilde\theta\,\dd\M + \alpha_T\int_{\Gamma_i} \left(\frac{gp_1}{R\overline T}\right)^2\theta\tilde\theta\,\dd\Gamma_i, \\
&a_q(q,\tilde q) =\mu_q (\nabla q,\nabla \tilde q)
+\nu_q \int_\M \left(\frac{gp}{R\overline T}\right)^2 \de_p q\,\de_p\tilde q\dd\M+\alpha_q\int_{\Gamma_i}\left(\frac{gp_1}{R\overline T}\right)^2q\tilde q\dd\Gamma_i,\\
&m_{\theta}(\theta,\tilde\theta) = - \nu_T\int_\M \left(\frac{g}{c_p\overline T}\right)^2\theta\tilde\theta\,\dd\M+ \frac{\nu_T}{c_p}\int_\M \frac{g^2p}{R\overline{T}^2}\big(\theta\de_p\tilde\theta-\tilde\theta\de_p\theta\big)\, \dd\M, \\
&b(\vv,\tilde{\vv}, \vv^\#)=\int_\M\bigg( (\vv\cdot \nabla)\tilde{\vv} + \omega \frac{\de \tilde{\vv}}{\de p} \bigg)\cdot\vv^\#\,\dd\M,\\
&e_p(\theta,\tilde\vv)=\int_\M \nabla \int_p^{p_1} \frac{R}{p'}\left(\frac{p'}{p_0}\right)^{R/c_p}\theta \dd p' \cdot\tilde\vv\,\dd\M,\\
&e_c(\vv,\tilde\vv) = \int_\M (f\kk \times \vv)\cdot \tilde{\vv}\, \dd\M.
\end{align*}
Again, in order to keep the notation as simple as possible, the trilinear form $b$ could also have
scalar functions in the last two arguments, meaning, for example, that
$$
b(\vv,\tilde{\theta}, \theta^\#)=\int_\M\bigg( \vv\cdot \nabla\tilde{\theta} + \omega \frac{\de \tilde{\theta}}{\de p} \bigg)\theta^\#\,\dd\M.
$$
It is clear that the $a_i$'s ($i=\vv,\theta,q$) are  bilinear continuous symmetric forms on $H^1$ that satisfy
\begin{equation}\label{eq2.3.3}
|a_{\vv}(\vv,\tilde\vv)\leq C \|\vv\|\|\tilde\vv\|,\quad a_{T}(\theta,\tilde\theta)\leq C \|\theta\|\|\tilde\theta\|, 
\quad a_{q}(q, \tilde q)\leq C\|q\|\|\tilde q\|,\quad(C>0),
\end{equation}
for every $(\vv,\theta,q), (\tilde\vv,\tilde\theta,\tilde q)\in V$.
Furthermore,  they are coercive, for every $(\vv,\theta,q)\in V$:
\begin{equation}\label{eq2.3.4}
a_{\vv}(\vv,\vv)\geq \kappa_{\vv} \|\vv\|^2,\quad a_{T}(\theta,\theta)\geq \kappa_{T} \|\theta\|^2, 
\quad a_{q}(q,q)\geq \kappa_{q} \|q\|^2,
\end{equation}
where $\kappa_{\star}$ for $\star=\vv, T, q$ is defined by
\[
\kappa_{\star}=\min(\mu_{\star}, \nu_{\star}, \alpha_{\star}) >0.
\]
We also have that $m_\theta$ is bilinear continuous on $H^1$, with 
\begin{equation}\label{eq2.3.5}
m_\theta (\theta,\tilde \theta) \leq C\|\theta\|\|\tilde \theta\|, \quad (C>0),
\end{equation}
and satisfies
\begin{equation}\label{eq2.3.5b}
m_\theta (\theta,\theta) \geq -C|\theta|^2,  \quad (C>0).
\end{equation}
Finally, $e_{p}$ is bilinear continuous on  either 
$H^1\times \H$ or $L^2\times \V$, and  $e_c$ is bilinear continuous on $L^2\times L^2$. Hence, we have,
for some $C>0$,
\begin{equation}\label{eq2.3.6}
|e_p(\theta,\vv)|\leq C\|\theta\||\vv|,\quad |e_p(\theta,\vv)|\leq C|\theta|\|\vv\|,\qquad |e_c(\vv,\tilde\vv)|\leq C|\vv||\tilde\vv|.
\end{equation}
Also, we have the following result concerning the trilinear form $b$ (see \cite[Lemmas~2.1 and~3.1]{PTZ08}).

\begin{lemma}\label{lem2.1}
The trilinear form $b$ is continuous on $\V\times\V\times (\V\cap \H^2)$, and
\begin{equation}\label{eq2.3.7}
|b(\vv,\tilde\vv,\vv^\#)|\leq C\|\vv\||\tilde\vv|^{1/2}\|\tilde\vv\|^{1/2}\|\vv^\#\|_{H^2},
\end{equation}
for some constant $C>0$. Furthermore, 
\begin{equation}\label{eq2.3.9}
|b(\vv,\tilde\vv,\vv^\#)|\leq C\|\vv\|^{1/2}\|\vv\|^{1/2}_{H^2}\|\tilde\vv\|^{1/2}\|\tilde\vv\|_{H^2}^{1/2}|\vv^\#|,
\end{equation}
and
\begin{equation}\label{eq2.3.9b}
|b(\vv,\tilde\vv,\vv^\#)|\leq C\|\vv\|^{1/2}\|\vv\|^{1/2}_{H^2}\|\tilde\vv\||\vv^\#|^{1/2}\|\vv^\#\|^{1/2}.
\end{equation}
Also, using the incompressibility condition \eqref{eq1.1.3} 
and the boundary conditions $\uu\cdot  \boldsymbol n=0$ on $\partial\M$, we have
\begin{equation}\label{eq2.3.8}
b(\vv,\tilde\vv,\tilde\vv)=0.
\end{equation}
\end{lemma}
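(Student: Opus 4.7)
The plan is to decompose $b$ into its horizontal and vertical advection pieces,
$$
b_h(\vv,\tilde\vv,\vv^\#)=\int_\M (\vv\cdot\nabla)\tilde\vv\cdot\vv^\#\,\dd\M,\qquad b_v(\vv,\tilde\vv,\vv^\#)=\int_\M \omega(\vv)\,\de_p\tilde\vv\cdot\vv^\#\,\dd\M,
$$
and to exploit the explicit representation $\omega(\vv)=\int_p^{p_1}\dive\vv\,\dd p'$ from \eqref{eq2.1.5}. I would first establish the antisymmetry identity \eqref{eq2.3.8}, since it pins down which integrations by parts are admissible under our boundary conditions. Recombining $b_h+b_v$ into the three-dimensional form
$$
b(\vv,\tilde\vv,\tilde\vv)=\tfrac{1}{2}\int_\M \uu\cdot\nabla_3|\tilde\vv|^2\,\dd\M,\qquad \uu=(\vv,\omega(\vv)),
$$
I would integrate by parts and observe that the bulk term $-\tfrac12\int_\M(\dive_3\uu)|\tilde\vv|^2$ vanishes by the three-dimensional incompressibility \eqref{eq1.1.3b}, while the boundary contributions vanish thanks to $\vv\cdot\boldsymbol n=0$ on $\Gamma_\ell$ and $\omega=0$ on $\Gamma_i\cup\Gamma_u$.

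For the three estimates \eqref{eq2.3.7}--\eqref{eq2.3.9b}, the strategy is to bound each of $b_h$ and $b_v$ by H\"older's inequality with different Lebesgue triples, and then to invoke the three-dimensional Sobolev embeddings $H^2\hookrightarrow L^\infty$, $H^2\hookrightarrow W^{1,6}$, $H^1\hookrightarrow L^6$, along with the interpolation inequalities $\|f\|_{L^3}\leq C|f|^{1/2}\|f\|^{1/2}$ and $\|f\|_{H^1}\leq C|f|^{1/2}\|f\|_{H^2}^{1/2}$. The non-local vertical velocity is controlled via Minkowski's inequality applied to \eqref{eq2.1.5}, yielding both $|\omega(\vv)|\leq C\|\vv\|$ and $\|\omega(\vv)\|_{H^1}\leq C\|\vv\|_{H^2}$, which are the two quantitative facts we need. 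When the natural Hölder split places a gradient on $\tilde\vv$ that is too expensive, I would integrate by parts either in the horizontal variables for $b_h$ (using $\vv\cdot\boldsymbol n=0$ on $\Gamma_\ell$) or in $p$ for $b_v$ (using $\omega=0$ on $\Gamma_i\cup\Gamma_u$), thereby transferring the derivative onto $\vv^\#$, picking up only the harmless extra term $\int(\dive\vv)\tilde\vv\cdot\vv^\#$ along the way.

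Concretely, for \eqref{eq2.3.7} I would use the triple $L^2\cdot L^6\cdot L^3$ after shifting derivatives onto $\vv^\#$, and absorb the $L^3$-norm of $\tilde\vv$ by the interpolation inequality above; for \eqref{eq2.3.9} I would use $L^6\cdot L^3\cdot L^2$, keeping $\vv^\#$ in $L^2$ and splitting the $H^1$-norms of $\vv$ and $\tilde\vv$ symmetrically through $\|\cdot\|\leq C|\cdot|^{1/2}\|\cdot\|_{H^2}^{1/2}$; and for \eqref{eq2.3.9b} I would play the analogous game with the roles of $\tilde\vv$ and $\vv^\#$ interchanged.

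The main obstacle is bookkeeping: matching the precise exponents in each of \eqref{eq2.3.7}--\eqref{eq2.3.9b} requires choosing the H\"older partition and the integration-by-parts scheme carefully, and separately verifying that the resulting boundary terms vanish under the homogeneous wind-driven/free-slip conditions \eqref{eq2.1.10b}. Since the bulk of the estimates is already available in \cite[Lemmas~2.1 and~3.1]{PTZ08}, the essential content of the proof reduces to checking these compatibilities and to ensuring that the non-local estimates on $\omega(\vv)$ supply the regularity needed on each of the three right-hand sides.
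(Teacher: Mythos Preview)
The paper does not give its own proof of this lemma; it simply cites \cite[Lemmas~2.1 and~3.1]{PTZ08}. So your proposal is really a reconstruction, and the question is whether it would go through.

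Your treatment of the antisymmetry \eqref{eq2.3.8} and of the horizontal piece $b_h$ is fine, and the integration-by-parts scheme you describe is correct. The gap is in the vertical advection term $b_v=\int_\M\omega\,\de_p\tilde\vv\cdot\vv^\#\,\dd\M$ for the estimates \eqref{eq2.3.9} and \eqref{eq2.3.9b}. Purely isotropic three-dimensional Sobolev embeddings and interpolation cannot deliver the stated exponents there. For instance, for \eqref{eq2.3.9} you need $\vv^\#$ in $L^2$, which forces $\omega$ and $\de_p\tilde\vv$ to share the H\"older weight $1/2$; the best isotropic placements (say $L^3\times L^6$ or $L^6\times L^3$) then produce either a full $\|\vv\|_{H^2}$ or a full $\|\tilde\vv\|_{H^2}$ factor, never the balanced $\|\vv\|^{1/2}\|\vv\|_{H^2}^{1/2}\,\|\tilde\vv\|^{1/2}\|\tilde\vv\|_{H^2}^{1/2}$. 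A scaling count confirms this: at the target regularity both $\omega$ and $\de_p\tilde\vv$ sit at $H^{1/2}\hookrightarrow L^3$, and $L^3\cdot L^3\cdot L^2$ does not close.

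What is actually needed is the anisotropic mechanism used repeatedly later in the paper (see the uniqueness proof, e.g.\ the bound on $(\ho\,\de_p\vv_2,A_{\vv}\hv)$): from \eqref{eq2.1.5} one has $\|\omega\|_{L^\infty_p}\leq C\|\dive\vv\|_{L^2_p}$, and together with the mixed-norm Ladyzhenskaya inequality $\|f\|_{L^4_{x,y}L^2_p}\leq C|f|^{1/2}\|f\|_{H^1}^{1/2}$ this yields
\[
|b_v|\leq \|\omega\|_{L^4_{x,y}L^\infty_p}\|\de_p\tilde\vv\|_{L^4_{x,y}L^2_p}|\vv^\#|\leq C\|\vv\|^{1/2}\|\vv\|_{H^2}^{1/2}\|\tilde\vv\|^{1/2}\|\tilde\vv\|_{H^2}^{1/2}|\vv^\#|,
\]
and similarly for \eqref{eq2.3.9b}. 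Your proposal should replace the isotropic toolbox by this anisotropic estimate for the $\omega$-term; once that is done, the rest of your outline is correct.
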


Using Lemma~\ref{lem2.1}, we then define the linear and bilinear continuous operators as follows:
\begin{alignat*}{3}
&A_{\vv}\colon\V\to \V', \qquad & &\la A_{\vv} \vv,\tilde\vv \ra=a_{\vv}(\vv,\tilde\vv),\qquad & &\forall  \vv,\tilde \vv\in \V,\\
&A_T\colon H^1\to (H^1)', \quad & &\la A_T \theta,\tilde\theta \ra=a_T(\theta,\tilde\theta),\quad & &\forall\, \theta,\tilde \theta\in H^1,\\
&A_q\colon H^1\to (H^1)', \quad & &\la A_q q,\tilde q \ra=a_q(q,\tilde q),\quad & &\forall\, q,\tilde q\in H^1,\\
&M_\theta\colon H^1\to (H^1)', \quad & &\la M_\theta \theta,\tilde\theta \ra=m_\theta(\theta,\tilde\theta),\quad & & \forall\, \theta,\tilde \theta\in H^1,\\
&E_p\colon L^2\to \V',\quad & &\la E_p\theta, \tilde\vv\ra=e_p(\theta,\tilde\vv),\quad & &\forall\, \theta \in L^2,\tilde\vv\in \V,\\
&E_c\colon\H\to \H,\quad  & &\la E_c\vv, \tilde\vv\ra=e_c(\vv,\tilde\vv),\quad & &\forall\, \vv,\tilde\vv\in \H.
\end{alignat*}
Also, from the properties of the trilinear form $b$, we can define a bilinear operator
$$
B\colon\V\times \V\to (\V\cap (H^2)^2)'
$$
acting as
$$
\la B(\vv,\tilde\vv),\vv^\#\ra=b(\vv,\tilde\vv,\vv^\#),\qquad \forall\,\vv,\tilde\vv\in \V, \vv^\#\in \V\cap (H^2)^2.
$$

\subsection{Quasi-strong solutions}\label{def:quasi-strong}
Let  $\vv_0\in \V$ and $\theta_0,q_0\in L^2$. 
Assume $(\boldsymbol S_{\vv},S_\theta,S_q)\in L^2(0,\T;H)$, $q_s\in(0,1)$ 
and $\T >0$. A vector $(\vv,\theta, q)$ is a \emph{quasi-strong} solution to \eqref{eq2.1.7b}--\eqref{eq2.1.15b}  if 
\begin{alignat*}{2}
&\vv\in C([0,\T];\V)\cap L^2(0,\T;H^2), \qquad & &\theta,q\in C([0,\T];L^2)\cap L^2(0,\T;H^1),\\
&\de_t\vv\in  L^2(0,\T;\H), \qquad & &  \de_t\theta,\de_t q\in  L^{2}(0,\T;(H^1)'),
\end{alignat*}
and, for almost every $t\in [0,t_1]$ and every $(\tilde\vv, \tilde\theta,\tilde q)\in \H\times H^1\times H^1$,
\begin{align}
&\la\de_t\vv, \tilde\vv\ra +a_{\vv}(\vv,\tilde\vv)+ b(\vv,\vv,\tilde\vv)+e_p(\theta,\tilde\vv)+e_c(\vv,\tilde\vv)=(\boldsymbol S_{\vv},\tilde\vv),\\
&\la\de_t\theta, \tilde\theta\ra +a_{\theta}(\theta,\tilde\theta)+ b(\vv,\theta,\tilde\theta)+m_\theta(\theta,\tilde\theta)
=\left( \frac{L}{c_p} \left(\frac{p_0}{p}\right)^{\frac{R}{c_p}} D(\omega, \theta,h_q),\tilde\theta\right)+(S_{\theta},\tilde\theta),\\
&\la\de_t q, \tilde q\ra +a_{q}(q,\tilde q)+ b(\vv,q,\tilde q)
=-(D(\omega, \theta,h_q),\tilde q)+(S_{q},\tilde q),
\end{align}
for some $h_q\in L^\infty(\M\times (0,\T))$ which satisfies the variational inequality
\begin{equation}\label{eq2.3.12}
([\tilde q-q_s]^+,1)- ([q-q_s]^+,1)\geq  ( h_q, \tilde q-q), \qquad \text{a.e. } t\in [0,\T], \quad \forall\, \tilde q\in H^1.
\end{equation}
Some remarks are in order.

\begin{remark}
With a little abuse of notation, we wrote
$$
D(\omega, \theta,h_q)=\frac{1}{p}\omega^{-}h_q\tF (p,\theta).
$$
Compared to \eqref{eq:DDDDD}, we now have that $D(\omega, \theta,h_q)$ is a single-valued map, denoting 
by $h_q$ an (arbitrary) element of the set $H(q-q_s)$.
\end{remark}

\begin{remark}
The variational inequality \eqref{eq2.3.12} expresses the fact that $h_q$ is an element
of the sub-differential of the positive part function $q\mapsto ([q-q_s]^+,1)$. Since
$$
\de ( [q-q_s]^+,1 )=H(q-q_s),
$$
it is easy to see that if $h_q\in H(q-q_s)$, then
\begin{equation}\label{amorino1}
h_q(x,y,p)=\begin{cases}
1, \qquad \text{if } q>q_s,\\
0, \qquad \text{if } q<q_s,
\end{cases}
\end{equation}
while if $q(x,y,p)=q_s$, we have
\begin{equation}\label{amorino2}
h_q(x,y,p)\in [0,1].
\end{equation}
\end{remark}

\section{An approximated problem}\label{sec:approx111}
\noindent In this section, we construct a family of regularized problems which approximate Problem \eqref{eq2.1.7b}--\eqref{eq2.1.15b}
in a suitable sense. In this way, the limit of such approximated solutions
will be shown to be a solution to our problem, in the sense made precise in Section \ref{def:quasi-strong}.
The proofs are based on \emph{a priori} estimates and compactness arguments, and the
variational inequality \eqref{eq2.3.12} plays an essential role.

\subsection{Problem \eqref{eq3.1.3}}
In order to introduce the approximated problems, we first define the real functions $H_\eps$ and $K_\eps$ approximating $H$ and $r^{+}$ (the positive part of $r$). Namely, for $\eps\in(0,1]$, let
\begin{equation}\label{eq:HepsK}
H_\eps(r)=
\begin{cases}
0, \quad &r\leq 0,\\
r/\eps, \quad &r\in (0,\eps],\\
1,\quad &r>\eps, 
\end{cases}
\qquad K_\eps(r)=
\begin{cases}
0, \quad &r\leq 0,\\
r^2/2\eps, \quad &r\in (0,\eps],\\
r-\eps/2,\quad &r>\eps.
\end{cases}
\end{equation}
It is straightforward to check that $K_\eps'=H_\eps$, 
\begin{equation}\label{eq:Hlip}
|H_\eps(r_1)|\leq 1,\quad
|H_\eps(r_1)-H_\eps(r_2)|\leq \frac{1}{\eps}|r_1-r_2|, \quad\quad \forall\, r_1,r_2\in \R,
\end{equation}
and
\begin{equation}\label{eq:Klip}
|K_\eps(r_1)-K_\eps(r_2)|\leq |r_1-r_2|, \quad\quad  \forall\, r_1,r_2\in \R.
\end{equation}
Moreover, 
\begin{equation}\label{eq:Kclos}
|K_\eps(r)-r|\leq \frac{\eps}{2}, \quad\quad  \forall\, r\geq 0.
\end{equation}
We then consider the following family of problems, depending on the parameter $\eps$, and for which we seek local quasi-strong solutions for every fixed $\eps>0$:
\begin{equation}\label{eq3.1.3}\tag{\textbf{P}$_\eps$}
\begin{aligned}
&\la\de_t\vv, \tilde\vv\ra +a_{\vv}(\vv,\tilde\vv)+ b(\vv,\vv,\tilde\vv)+e_p(\theta,\tilde\vv)+e_c(\vv,\tilde\vv)=(\boldsymbol S_{\vv},\tilde\vv),\\
&\la\de_t\theta, \tilde\theta\ra +a_{T}(\theta,\tilde\theta)+ b(\vv,\theta,\tilde\theta)+m_\theta(\theta,\tilde\theta)
=\left( \frac{L}{c_p} \left(\frac{p_0}{p}\right)^{\frac{R}{c_p}} D_\eps(\omega, \theta,q),\tilde\theta\right)+(S_{\theta},\tilde\theta),\\
&\la\de_tq, \tilde q\ra +a_{q}(q,\tilde q)+ b(\vv,q,\tilde q)
=-(D_\eps(\omega, \theta,q),\tilde q)+(S_{q},\tilde q).
\end{aligned}
\end{equation}
Here,
$$
D_\eps(\omega, \theta,q)=\frac{1}{p}\omega^{-}H_\eps(q-q_s)\tF (p,\theta),
$$
is now a well-defined map.

Now, we state the main result of this section, 
with the proof presented in the subsequent paragraphs.

\begin{theorem}\label{thm3.1}
Assume that $\vv_0\in \V$ and $\theta_0, q_0\in L^2$ are given and the forcing term 
$(\boldsymbol S_{\vv},S_\theta,S_q)\in L^2(0,\T;H)$. 
Let $q_s\in(0,1)$, $\T>0$ and $\eps>0$ be fixed. 
Then there exists $t_*>0$ ($t_*\leq\T$, \emph{independent of $\eps$}) and  
a  quasi-strong solution $(\vv^\eps,\theta^\eps, q^\eps)$ to \eqref{eq3.1.3} such that
$$
\vv^\eps\in C([0,t_*);\V)\cap L^2(0,t_*;H^2), \qquad \de_t \vv^\eps\in L^2(0,t_*;\H),
$$
and
$$ 
\theta^\eps,q^\eps\in C([0,t_*);L^2)\cap L^2(0,t_*;H^1), \qquad
\de_t\theta^\eps,\de_t q^\eps\in L^{2}(0,t_*;(H^1)').
$$
Moreover, the following estimates hold true:
\begin{equation}\label{eq:unifesteps}
	\begin{cases}
	\|\vv^\eps\|_{L^\infty(0,t_*; \V)}+\|\vv^\eps\|_{L^2(0,t_*; H^2)}+\|(\theta^\eps,q^\eps)\|_{L^\infty(0,t_*; L^2\times L^2)}\\
	\hspace{170pt}+\|(\theta^\eps,q^\eps)\|_{L^2(0,t_*;H^1\times H^1)} \leq \Q, \\
	\|\de_t \vv^\eps\|_{L^2(0,t_*;\H)}+\|(\de_t\theta^\eps,\de_t q^\eps)\|_{L^{2}(0,t_*;(H^1)'\times (H^1)')} \leq \Q,		
	\end{cases}
\end{equation}
where $\Q$ is a positive function independent of $\eps$ defined by
$$
\Q:=\Q (t_*,\|\vv_0\|,|\theta_0|,|q_0|,\|(\boldsymbol S_{\vv},S_\theta,S_q)\|_{L^2(0,\T;H)}),
$$
which is increasing in all its arguments.
\end{theorem}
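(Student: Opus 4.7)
The plan is to construct the quasi-strong solution to \eqref{eq3.1.3} by a standard Faedo--Galerkin procedure combined with local-in-time \emph{a priori} estimates whose constants do not depend on $\eps$. Let $\{\bs e_j^{\vv}\}$, $\{e_j^\theta\}$, $\{e_j^q\}$ be spectral bases of $\V$, $H^1$, $H^1$ (say the eigenfunctions of $A_{\vv}, A_T, A_q$), let $P_n$ denote the corresponding orthogonal projections, and seek $(\vv_n,\theta_n,q_n)$ in their span satisfying the projection of \eqref{eq3.1.3}. Since $H_\eps$ and $\tF$ are globally Lipschitz and bounded (see \eqref{eq:tF} and \eqref{eq:Hlip}), the resulting ODE system has locally Lipschitz right-hand side, hence admits a maximal solution on some $[0, t_n)$.

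Next I would derive $L^2$-type estimates for $\theta_n, q_n$ by testing the second and third equations of \eqref{eq3.1.3} with $\theta_n$ and $q_n$ respectively, using $b(\vv_n,\theta_n,\theta_n)=b(\vv_n,q_n,q_n)=0$ (by \eqref{eq2.3.8}), the coercivity \eqref{eq2.3.4} of $a_T, a_q$, and the lower bound \eqref{eq2.3.5b} on $m_\theta$. The key observation is that $|H_\eps|\leq 1$ and $|\tF|\leq C_{\tF}$, so
\begin{equation*}
|D_\eps(\omega_n,\theta_n,q_n)| \leq \frac{C_{\tF}}{p_0}\,|\omega_n|,
\end{equation*}
and \eqref{eq2.1.5} together with Cauchy--Schwarz in $p$ yields $|\omega_n|\leq C\|\vv_n\|$. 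Thus $|(D_\eps,\theta_n)|\leq C\|\vv_n\||\theta_n|$, and similarly for the $q$-equation, giving a Gronwall inequality in which all $\eps$-dependence has disappeared.

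The heart of the proof is the $\V$-level estimate on $\vv_n$, obtained by testing the momentum equation with $A_{\vv}\vv_n$. Here $(\de_t\vv_n, A_{\vv}\vv_n)=\tfrac{1}{2}\ddt a_{\vv}(\vv_n,\vv_n)$, $(A_\vv\vv_n,A_\vv\vv_n)\gtrsim |A_\vv\vv_n|^2$, and the Coriolis and pressure gradient terms are controlled via \eqref{eq2.3.6} by $C\|\vv_n\||A_\vv\vv_n| + C|\theta_n|\,|A_\vv\vv_n|$. The main obstacle is the trilinear term $b(\vv_n,\vv_n,A_\vv\vv_n)$; applying \eqref{eq2.3.9} together with the Cattabriga-type regularity estimate $\|\vv_n\|_{H^2}\leq C|A_\vv\vv_n|$ (whose adaptation to the present boundary conditions is discussed in Appendix \ref{sec-kz-idea}) produces
\begin{equation*}
|b(\vv_n,\vv_n,A_\vv\vv_n)| \leq C\|\vv_n\|\,\|\vv_n\|_{H^2}\,|A_\vv\vv_n| \leq \tfrac{1}{4}|A_\vv\vv_n|^2 + C\|\vv_n\|^2\,|A_\vv\vv_n|^2.
\end{equation*}
After absorbing the first term into the LHS, one arrives at a differential inequality of the form $\ddt \|\vv_n\|^2 + (1-C\|\vv_n\|^2)|A_\vv\vv_n|^2 \leq C(1+\|\vv_n\|^2+|\theta_n|^2+|\bs S_\vv|^2)$. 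A standard comparison argument with the ODE $y'=C(1+y)^2$ then yields a time $t_*>0$, depending only on $\|\vv_0\|$, $|\theta_0|$, $|q_0|$ and the norm of $(\bs S_\vv,S_\theta,S_q)$ (and in particular not on $n$ nor on $\eps$), on which $\|\vv_n(t)\|$ stays bounded and $|A_\vv\vv_n|^2$ is integrable in time. Together with the earlier $L^2$-bounds for $\theta_n,q_n$ this yields the first line of \eqref{eq:unifesteps}; the bounds on the time derivatives in the second line follow by using each Galerkin equation to express $\de_t\vv_n, \de_t\theta_n, \de_tq_n$ as duality pairings and estimating the resulting nonlinear terms via the continuity properties of $a_i$, $b$, $e_p$, $e_c$, $m_\theta$ and the boundedness of $D_\eps$.

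Finally I would pass to the limit $n\to\infty$ by extracting weak-$*$ limits $\vv^\eps,\theta^\eps,q^\eps$ in the spaces given by \eqref{eq:unifesteps} and invoking the Aubin--Lions compactness lemma to obtain strong convergence of $\vv_n$ in $L^2(0,t_*;\V)$ and of $\theta_n,q_n$ in $L^2(0,t_*;L^2)$. Strong convergence together with the \emph{Lipschitz} continuity of $H_\eps$ (at $\eps$ fixed) makes $D_\eps(\omega_n,\theta_n,q_n)\to D_\eps(\omega^\eps,\theta^\eps,q^\eps)$ in $L^2$, so the nonlinear terms pass to the limit; the bilinear/trilinear terms pass to the limit by standard arguments. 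The initial conditions are recovered from the $C([0,t_*);\V)$- and $C([0,t_*);L^2)$-regularities which follow from the bounds on the time derivatives, completing the construction.
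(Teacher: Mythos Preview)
There is a genuine gap in your treatment of the $\V$-level estimate for the velocity. The primitive-equations trilinear form, because of the vertical velocity $\omega=\int_p^{p_1}\dive\vv$, only admits the estimate $|b(\vv_n,\vv_n,A_\vv\vv_n)|\le C\|\vv_n\|\,|A_\vv\vv_n|^2$ (this is exactly what \eqref{eq2.3.9} gives after identifying $\|\vv_n\|_{H^2}$ with $|A_\vv\vv_n|$); unlike the three-dimensional Navier--Stokes case there is \emph{no} estimate of the form $C\|\vv_n\|^{3/2}|A_\vv\vv_n|^{3/2}$ available here. Consequently the differential inequality you actually obtain is
\[
\ddt\|\vv_n\|^2+\bigl(1-C\|\vv_n\|\bigr)\,|A_\vv\vv_n|^2\le g(t),
\]
which is not of Riccati type and cannot be compared with $y'=C(1+y)^2$: when $\|\vv_0\|$ is large the coefficient $1-C\|\vv_n(0)\|$ is negative from the start, and since $|A_\vv\vv_n|^2$ is not controlled by $\|\vv_n\|^2$ no uniform-in-$n$ (let alone uniform-in-$\eps$) local existence time can be extracted this way. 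Your splitting $C\|\vv_n\|\,|A_\vv\vv_n|^2\le \tfrac14|A_\vv\vv_n|^2+C\|\vv_n\|^2|A_\vv\vv_n|^2$ is algebraically correct but does not help, for the same reason.

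The paper resolves this by writing $\vv=\vv^*+\vv'$, where $\vv^*$ solves the \emph{linear} Stokes-type problem \eqref{eq3.4.1} (global $L^\infty(0,\T;\V)\cap L^2(0,\T;H^2)$ bounds regardless of the size of $\vv_0$; Lemma~\ref{lem3.5.1}) and $\vv'$ carries the nonlinearity but has $\vv'(0)=0$ (Lemma~\ref{lem3.5.2}). Since $\|\vv'(0)\|=0$, the coefficient $1-c_\star\|\vv'\|$ in the corresponding inequality for $\vv'$ equals $1$ initially, and $t_*$ is defined as the first time it drops to $1/2$; this time depends on the data only through the bounds on $\vv^*$, hence is independent of $n$ and of $\eps$. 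A smaller point: your $L^2$ estimates for $\theta_n,q_n$ alone do not close, since $|(D_\eps,\theta_n)|\le C\|\vv_n\|\,|\theta_n|$ injects $\|\vv_n\|^2$ on the right-hand side; one must also test the $\vv$-equation with $\vv_n$ and add the three inequalities so that $\|\vv_n\|^2$ is absorbed by the coercivity of $a_\vv$, as in Lemma~\ref{lem3.3.1}.
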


\subsection{The Galerkin approximation}
In order to establish an existence result for this problem, we implement 
the Galerkin method using the eigenvectors $e_j$ of $A=A_{\vv}\oplus A_T \oplus A_q$:
\begin{equation}\label{eq:eigenvalues}
Ae_j=\lambda_je_j,\qquad j\geq 1,\quad 0<\lambda_1\leq\lambda_2\leq\cdots.
\end{equation}
The results of \cite[Section 4.1]{PTZ08} guarantee 
the following result.

\begin{lemma}\label{lem3.2}
The eigenvectors $e_j$ of $A$ belong to $(H^2)^4$. 
\end{lemma}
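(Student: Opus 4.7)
Since $A = A_{\vv}\oplus A_T\oplus A_q$ acts diagonally on $V = \V\times H^1 \times H^1$, any eigenvector of $A$ splits into an eigenvector of one of the three constituent operators (with the other components equal to zero), so it suffices to prove $H^2$ regularity separately for eigenvectors of $A_\vv$, $A_T$, and $A_q$. In each case, the operator is defined through a continuous, coercive, symmetric bilinear form on the appropriate Hilbert space (see \eqref{eq2.3.3}--\eqref{eq2.3.4}); hence the Lax--Milgram / Hilbert--Schmidt theory gives a discrete spectrum of positive eigenvalues $\lambda_j\to\infty$ with eigenvectors in $V$. The real content of the lemma is to upgrade this $H^1$ membership to $H^2$.

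I would begin with the scalar cases $A_T$ and $A_q$, which are essentially identical. The eigenvalue problem $A_T \theta = \lambda \theta$ is an elliptic boundary-value problem on the cylinder $\M = \M'\times(p_0,p_1)$ with coefficients that are smooth in $p$ (since $\overline T(p)$ is bounded above and below away from $0$ by \eqref{eq:tstar1}, and the weights $(gp/R\overline T)^2$ are smooth and uniformly elliptic in the $p$-direction) and with the mixed boundary conditions \eqref{eq2.1.11b} and \eqref{eq2.1.12b}. These Robin/Neumann-type conditions are oblique but coercive, so classical elliptic regularity up to the boundary (e.g.\ the Agmon--Douglis--Nirenberg theory, or, in the cylindrical geometry at hand, a straightforward difference-quotient argument in the horizontal variables coupled with the one-dimensional ODE regularity in $p$) yields $\theta\in H^2(\M)$ provided $\partial\M' \in C^{1,1}$, which follows from the smoothness of $\M'$. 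The corners $\partial\M'\times\{p_0,p_1\}$ are right-angle dihedral edges across which the boundary conditions are compatible (Neumann-type on $\Gamma_\ell$ and Robin/Neumann in $p$ on $\Gamma_i,\Gamma_u$), so they cause no loss of regularity; this is the standard setup in \cite{PTZ08}. The same argument applies verbatim to $A_q$.

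For the velocity component, the eigenvalue problem $A_\vv \vv = \lambda \vv$ must be interpreted in the weak sense $a_\vv(\vv,\tilde\vv) = \lambda(\vv,\tilde\vv)$ for every $\tilde\vv\in\V$, and the constraint encoded in $\V$, namely $\nabla\cdot\int_{p_0}^{p_1}\vv\,\dd p = 0$, means the pointwise PDE reads
\begin{equation*}
-\mu_\vv \Delta\vv - \nu_\vv\partial_p\left(\left(\tfrac{gp}{R\overline T}\right)^2 \partial_p\vv\right) + \nabla\Phi_s = \lambda\vv,
\end{equation*}
with a Lagrange multiplier $\Phi_s(x,y)$ (independent of $p$) and the boundary conditions \eqref{eq2.1.10b}. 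This is precisely the ``hydrostatic Stokes'' problem. The approach is to first integrate the equation in $p$ over $(p_0,p_1)$ to obtain, using the boundary conditions on $\Gamma_u,\Gamma_i$, a two-dimensional elliptic equation for the barotropic mean $\bar\vv:=\int_{p_0}^{p_1}\vv\,\dd p$ coupled to $\Phi_s$ with $\dive\bar\vv=0$; this is a genuine 2D Stokes problem in $\M'$ with the tangential/normal boundary conditions of \eqref{eq2.1.10b}, and Cattabriga-type regularity yields $\Phi_s\in H^1(\M')$ and $\bar\vv\in H^2(\M')$. With $\nabla\Phi_s$ now identified as an $L^2$ source, the full three-dimensional equation for $\vv$ becomes an elliptic problem of the same mixed-boundary type as in the scalar case, and the same regularity argument gives $\vv\in (H^2(\M))^2$.

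The main obstacle is the hydrostatic Stokes step: the constraint is nonlocal in $p$, the pressure $\Phi_s$ depends only on $(x,y)$, and one must avoid a loss of regularity at the dihedral corners $\partial\M'\times\{p_0,p_1\}$ when handling the combined boundary conditions. As the statement cites \cite[Section 4.1]{PTZ08}, I would invoke the regularity of the hydrostatic Stokes operator established there rather than reconstruct it; alternatively, a direct argument decoupling the barotropic and baroclinic components, as sketched above, achieves the same result.
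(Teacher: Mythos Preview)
Your proposal is correct and in fact considerably more detailed than the paper's own treatment: the paper does not give a proof at all but simply cites \cite[Section~4.1]{PTZ08} for the $H^2$ regularity of the eigenvectors. Your sketch---elliptic regularity with mixed Robin/Neumann conditions for the scalar operators, and the hydrostatic Stokes regularity (via the barotropic/baroclinic decomposition) for $A_\vv$---is precisely the content of that reference, so the approaches coincide.

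One small point: your opening claim that ``any eigenvector of $A$ splits into an eigenvector of one of the three constituent operators (with the other components equal to zero)'' is not literally true if an eigenvalue happens to be shared between two blocks. What is true is that one may \emph{choose} an orthonormal basis of eigenvectors respecting the block structure, or equivalently that any eigenvector decomposes as a sum of eigenvectors of the component operators with the same eigenvalue; either formulation suffices for the regularity conclusion, so this is a cosmetic issue rather than a gap.
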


We denote by $A_{\vv}^{1/2}$ the square root of $A_{\vv}$; in particular,
\[
(A_{\vv}^{1/2} \vv, A_{\vv}^{1/2} \tilde\vv) = a_{\vv}(\vv, \tilde\vv),\qquad\forall\,\vv,\tilde\vv\in\V.
\]
Also denote by $A_{T}^{1/2}$ and $A_{q}^{1/2}$ the square roots of $A_{T}$ and $A_{q}$.

Note that the norm $|A_{\vv}^{1/2}\vv|$ is equivalent to the norm $\|\vv\|$ for $\vv\in \V$; 
also the norm $|A_{\vv}\vv|$ 
is equivalent to the norm $\|\vv\|_{H^2}$ for $\vv\in \V\cap (H^2)^2$. 
Similar results also hold for the operators $A_{\star}^{1/2}$ and $A_{\star}$ for $\star=T,q$ (cf.~\cite[Section 4]{PTZ08}).

\subsection{A priori $L^2$-estimates (I)}
Here we prove a
basic estimate on a Galerkin solution to \eqref{eq3.1.3}, which is contained in the following lemma.

\begin{lemma}\label{lem3.3.1}
Fix $\eps>0$ and let $(\vv^\eps,\theta^\eps,q^\eps)$ be a 
solution to \eqref{eq3.1.3} with initial datum  $(\vv_0,\theta_0,q_0)$. Then
\begin{equation}\label{eq3.3.10}
\begin{aligned}
\sup_{t\in [0,\T]}|(\vv^\eps,\theta^\eps,q^\eps)(t)|^2+ \int_0^{\T}\|(\vv^\eps,\theta^\eps,q^\eps)(t)\|^2\dd t 
&\leq C\e^{C\T}\big(|(\vv_0,\theta_0,q_0)|^2\\
&\quad+\norm{(\bs S_{\vv}, S_{T}, S_{q})}_{L^2(0,\T; H)}^2\big).
\end{aligned}
\end{equation}
\end{lemma}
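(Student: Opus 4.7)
The plan is a standard energy estimate carried out at the Galerkin level, relying on the cancellation properties of the trilinear and Coriolis forms together with the coercivity of the diffusion operators. Since all eigenfunctions $e_j$ lie in $(H^2)^4$ and the Galerkin truncation $(\vv^\eps_m,\theta^\eps_m,q^\eps_m)$ is itself an admissible test function, I will test \eqref{eq3.1.3} with $\vv^\eps_m$, $\theta^\eps_m$, $q^\eps_m$ respectively, add the three identities, and then pass to the limit $m\to\infty$ after obtaining uniform bounds.

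First I would exploit the cancellations. Lemma~\ref{lem2.1} together with its scalar extensions gives
\[
b(\vv^\eps,\vv^\eps,\vv^\eps)=b(\vv^\eps,\theta^\eps,\theta^\eps)=b(\vv^\eps,q^\eps,q^\eps)=0,
\]
and $e_c(\vv^\eps,\vv^\eps)=0$ since $\kk\times\vv^\eps$ is pointwise orthogonal to $\vv^\eps$. The coercivity bounds \eqref{eq2.3.4} turn $a_{\vv},a_T,a_q$ into positive contributions proportional to $\|\cdot\|^2$. The $m_\theta$-term is harmless thanks to \eqref{eq2.3.5b}, producing only a $-C|\theta^\eps|^2$ remainder that Grönwall will absorb. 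The coupling $e_p(\theta^\eps,\vv^\eps)$ is estimated by $|e_p(\theta^\eps,\vv^\eps)|\leq C|\theta^\eps|\|\vv^\eps\|$ from \eqref{eq2.3.6} and split via Young's inequality into $\frac{\kappa_{\vv}}{4}\|\vv^\eps\|^2 + C|\theta^\eps|^2$.

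The key step is controlling the non-smooth source terms coming from $D_\eps$. Since $|H_\eps|\leq 1$ by \eqref{eq:Hlip}, $|\tF|\leq C_{\tF}$ by \eqref{eq:tF}, and $p\ge p_0>0$, one has pointwise
\[
|D_\eps(\omega^\eps,\theta^\eps,q^\eps)|\leq \frac{C_{\tF}}{p_0}\,|\omega^\eps|.
\]
The diagnostic relation \eqref{eq2.1.5} together with the Poincaré-type trace estimate for vertical integrals yields $|\omega^\eps|\leq C\|\vv^\eps\|$ in $L^2$. Hence the saturation contributions are controlled by $C\|\vv^\eps\|(|\theta^\eps|+|q^\eps|)$, which after Young's inequality is absorbed into a small fraction of $\kappa_{\vv}\|\vv^\eps\|^2$ plus $C(|\theta^\eps|^2+|q^\eps|^2)$. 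The forcing terms $(\bs S_{\vv},S_\theta,S_q)$ are handled by Cauchy--Schwarz and Young's inequality in the standard way.

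Summing the three tested equations and rearranging gives a differential inequality of the form
\[
\ddt|(\vv^\eps,\theta^\eps,q^\eps)|^2 + \kappa\,\|(\vv^\eps,\theta^\eps,q^\eps)\|^2 \leq C\,|(\vv^\eps,\theta^\eps,q^\eps)|^2 + C\,|(\bs S_{\vv},S_\theta,S_q)|^2,
\]
uniformly in $\eps$ and in the Galerkin index $m$. Grönwall's lemma then yields \eqref{eq3.3.10}, and integrating the differential inequality produces the $L^2(0,\T;V)$ piece as well. The one subtle point to watch is the $L^2$-bound $|\omega^\eps|\le C\|\vv^\eps\|$ needed to close the $D_\eps$-estimate without any dependence on $\eps$; everything else is routine energy bookkeeping.
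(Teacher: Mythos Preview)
Your proposal is correct and follows essentially the same approach as the paper: test each equation with its own unknown, exploit the cancellations $b(\vv^\eps,\cdot,\cdot)=0$ and $e_c(\vv^\eps,\vv^\eps)=0$, use coercivity \eqref{eq2.3.4} and \eqref{eq2.3.5b}, bound $|D_\eps|\le C|\omega^\eps|\le C\|\vv^\eps\|$ via \eqref{eq2.1.5}, and close with Gr\"onwall. The only cosmetic difference is that the paper estimates $e_p(\theta^\eps,\vv^\eps)$ via the first bound in \eqref{eq2.3.6} (placing $\|\theta^\eps\|^2$ on the right, to be absorbed by the $\theta$-coercivity) whereas you use the second bound (placing $\|\vv^\eps\|^2$ on the right, absorbed by the $\vv$-coercivity); both routes close identically.
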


\begin{proof}
In what follows, $C$ is an absolute constant \emph{independent} of $\eps$ and also \emph{independent} of the initial data $(\vv_0,\theta_0,q_0)$ and the forcing terms $(\boldsymbol S_{\vv},S_\theta,S_q)$. 
We start from the estimate on the velocity. Taking the $L^2$-scalar of the $\vv$-equation in 
\eqref{eq3.1.3} with $\vv^\eps$, we obtain the energy equation
$$
\frac12\frac{\dd}{\dd t} |\vv^\eps|^2 +a_{\vv}(\vv^\eps,\vv^\eps)=-e_p(\theta^\eps,\vv^\eps)-e_c(\vv^\eps,\vv^\eps)+(\boldsymbol S_{\vv},\vv^\eps).
$$
Each term of the right hand side above can be respectively estimated as
\begin{equation}\begin{split}\label{eq3.3.11}
&|e_p(\theta^\eps,\vv^\eps)|\leq C\|\theta^\eps\||\vv^\eps|\leq \frac{\kappa_{T}}{2}\|\theta^\eps\|^2+C|\vv^\eps|^2,\\
&e_c(\vv^\eps,\vv^\eps)=0,\\
&|(\boldsymbol S_{\vv},\vv^\eps)|\leq |\boldsymbol S_{\vv}||\vv^\eps|\leq C|\boldsymbol S_{\vv}|^2+\frac{\kappa_{\vv}}{2}|\vv^\eps|^2.
\end{split}\end{equation}
Therefore, we find
\begin{equation}\label{eq:firstest1}
\frac{\dd}{\dd t} |\vv^\eps|^2 +\kappa_{\vv}\|\vv^\eps\|^2\leq  C|\vv^\eps|^2+\kappa_{T}\|\theta^\eps\|^2+C|\boldsymbol S_{\vv}|^2 .
\end{equation}
Turning to the $\theta$-equation and applying the same reasoning, we find that
$$
\frac12 \frac{\dd}{\dd t}|\theta^\eps|^2 + a_T(\theta^\eps,\theta^\eps)+m_\theta(\theta^\eps,\theta^\eps)
=\left( \frac{L}{c_p} \left(\frac{p_0}{p}\right)^{\frac{R}{c_p}} D_\eps(\omega^\eps, \theta^\eps,q^\eps),\theta^\eps\right)+(S_{\theta},\theta^\eps),
$$
As noted in \eqref{eq2.3.5b}, 
we have $m_\theta(\theta^\eps,\theta^\eps)\geq -C|\theta^\eps|^2$. Moreover, 
\begin{align*}
\left( \frac{L}{c_p} \left(\frac{p_0}{p}\right)^{\frac{R}{c_p}} D_\eps(\omega^\eps, \theta^\eps,q^\eps),\theta^\eps\right)
&\leq |D_\eps(\omega^\eps,\theta^\eps,q^\eps)||\theta^\eps|\leq C|\omega^\eps||\theta^\eps|\\
&\leq C\|\vv^\eps\||\theta^\eps|\leq \frac{\kappa_{\vv}}{8}\|\vv^\eps\|^2+C|\theta^\eps|^2.
\end{align*}
From the trivial estimate
$$
|(S_{\theta},\theta^\eps)|\leq C|S_\theta|^2+\frac{\kappa_T}{2}|\theta^\eps|^2,
$$
we learn that
\begin{equation}\label{eq:firstest2}
\frac{\dd}{\dd t}|\theta^\eps|^2 + \kappa_{T}\|\theta^\eps\|^2\leq \frac{\kappa_{\vv}}{4}\|\vv^\eps\|^2+C|\theta^\eps|^2+C|S_\theta|^2.
\end{equation}
Finally, a similar estimate can be deduced for $q^\eps$. Indeed, from  \eqref{eq3.1.3} we find
$$
\frac12\frac{\dd}{\dd t}|q^\eps|^2+a_q(q^\eps,q^\eps)=-(D_\eps(\omega^\eps, \theta^\eps,q^\eps),q^\eps)+(S_{q},q^\eps).
$$
As before,
$$
|(D_\eps(\omega^\eps, \theta^\eps,q^\eps),q^\eps)|\leq  \frac{\kappa_{\vv}}{8}\|\vv^\eps\|^2+C|q^\eps|^2
$$
and
$$
|(S_{q},q^\eps)|\leq C|S_q|^2+\frac{\kappa_{q}}{2}|q^\eps|^2,
$$
so that
\begin{equation}\label{eq:firstest3}
\frac{\dd}{\dd t}|q^\eps|^2 + \kappa_{q}\|q^\eps\|^2\leq \frac{\kappa_{\vv}}{4}\|\vv^\eps\|^2+C|q^\eps|^2+C|S_q|^2.
\end{equation}
Adding together \eqref{eq:firstest1}, \eqref{eq:firstest2}, and \eqref{eq:firstest3} we obtain
\begin{equation}\label{eq:firstest4}
\begin{aligned}
\ddt\big[|\vv^\eps|^2+|\theta^\eps|^2+|q^\eps|^2\big] 
&+\kappa\bigl(\|\vv^\eps\|^2+\|\theta^\eps\|^2+\|q^\eps\|^2\bigr) \\
&\leq C\bigl(|\vv^\eps|^2+|\theta^\eps|^2+|q^\eps|^2\bigr)+C\bigl(|\boldsymbol S_{\vv}|^2+|S_\theta|^2+|S_q|^2\bigr),
\end{aligned}
\end{equation}
with $\kappa=\min\{\kappa_{\vv}/2,\kappa_{T},\kappa_{q}\}>0$.
The conclusion \eqref{eq3.3.10} follows from a standard application of the Gronwall lemma.
\end{proof}

\subsection{Change of equations for $\vv$}
In order to prove the $H^1$-regularity on the velocity, we first study the linear problem for the velocity in \eqref{eq3.1.3}$_1$ and show that the solution of the linear problem enjoys the $H^1$-regularity; then we prove the $H^1$ regularity of the solution for the nonlinear problem of \eqref{eq3.1.3}$_1$ in a short time. We state the problems in this subsection and establish the desired a priori estimates in the next subsection.

We write the equation \eqref{eq3.1.3}$_1$ in the functional form:
\begin{equation}\begin{cases}\label{eq3.4.0}
\displaystyle \frac{\dd \vv}{\dd t} + A_{\vv}\vv + B_{\vv}(\vv,\vv) + E_c\vv = \boldsymbol S_{\vv}-E_p\theta ,\\
\vv(0)= \vv_0,
\end{cases}\end{equation}
where the first equation is understood in $\big(\V\cap (H^2)^2\big)'$, the dual space of the $H^2$-like space for the velocity field, 
and $\theta$ is given in the space $L^2(0,\T;H^1)$ and hence $E_p\theta$ belongs to $L^2(0,\T;L^2)$.
The linear equation that we consider reads
\begin{equation}\begin{cases}\label{eq3.4.1}
\displaystyle \frac{\dd \vv^*}{\dd t} + A_{\vv}\vv^*  + E_c\vv^* = \boldsymbol S_{\vv} - E_p\theta,\\
\vv^*(0)= \vv_0.
\end{cases}\end{equation}
We then set $\vv'=\vv - \vv^*$, and by subtracting \eqref{eq3.4.1} from \eqref{eq3.4.0}, we see that $\vv'$ satisfies
\begin{equation}\begin{cases}\label{eq3.4.2}
\displaystyle\frac{\dd \vv'}{\dd t} + A_{\vv}\vv'  +B_{\vv}(\vv',\vv') + B_{\vv}(\vv',\vv^*)+B_{\vv}(\vv^*,\vv') + E_c\vv' =-B_{\vv}(\vv^*,\vv^*),\\
\vv'(0)=0.
\end{cases}\end{equation}
Our goal in the following subsection is to prove the a priori estimates for $\vv^*$ in \eqref{eq3.4.1} and $\vv'$ in \eqref{eq3.4.2}, and the existence and uniqueness of solutions $\vv$ for \eqref{eq3.4.0}.
\subsection{A priori $H^1$-estimates for the velocity (II)}
In this subsection, we are aiming to derive the $L^\infty(H^1)$ a priori estimates for the equations \eqref{eq3.4.0}--\eqref{eq3.4.2}. We start with the a priori estimate for $\vv^*$ in \eqref{eq3.4.1}.

\begin{lemma}\label{lem3.5.1}
Assume that $\vv_0\in \V,\theta\in L^2(0,\T;H^1)$, and $\boldsymbol S_{\vv}\in L^2(0,\T;L^2)$, and let $\vv^*$ be the solution to \eqref{eq3.4.1}. Then $\vv^*$ belongs to $L^\infty(0,\T;\V)\cap L^2(0,\T;H^2)$ and satisfies
\begin{equation}\label{eq3.5.2}
  \sup_{t\in [0,\T]}|A_{\vv}^{1/2}\vv^*(t)|^2+ \int_0^{\T}|A_{\vv}\vv^*(t)|^2\dd t 
  \leq
  C\e^{Ct_1}\left( \|\vv_0\|^2+\int_0^{\T}(|\boldsymbol S_{\vv}(t)|^2 +\|\theta(t)\|^2 )\dd t\right).
\end{equation}
\end{lemma}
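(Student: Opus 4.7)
Since \eqref{eq3.4.1} is a \emph{linear} Cauchy problem with datum in $\V$ and forcing $\bs S_{\vv}-E_p\theta\in L^2(0,\T;\H)$ (using $|E_p\theta|\leq C\|\theta\|$ from \eqref{eq2.3.6}), standard Faedo--Galerkin theory applied to the eigenbasis $\{e_j\}$ of $A$ from \eqref{eq:eigenvalues} yields a unique solution $\vv^*\in L^2(0,\T;\V)\cap C([0,\T];\H)$. The content of the lemma lies in the a priori estimate \eqref{eq3.5.2}, which I would derive on the Galerkin approximations $\vv^*_N\in\mathrm{span}(e_1,\ldots,e_N)$ uniformly in $N$ and then pass to the limit.

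I propose a two-step testing strategy. \emph{Step 1 (basic energy estimate).} Take the $L^2$ scalar product of \eqref{eq3.4.1} with $\vv^*$. Since $e_c(\vv^*,\vv^*)=0$ and $a_{\vv}(\vv^*,\vv^*)\geq\kappa_{\vv}\|\vv^*\|^2$ by \eqref{eq2.3.4}, and using $|e_p(\theta,\vv^*)|\leq C\|\theta\||\vv^*|$ from \eqref{eq2.3.6} together with Cauchy--Schwarz and Young's inequality, one obtains
\begin{equation*}
\ddt|\vv^*|^2+\kappa_{\vv}\|\vv^*\|^2\leq C|\vv^*|^2+C|\bs S_{\vv}|^2+C\|\theta\|^2,
\end{equation*}
and Gronwall's lemma controls $\sup_{t\in[0,\T]}|\vv^*(t)|^2$ and $\int_0^{\T}\|\vv^*\|^2\,\dd s$ by the right-hand side of \eqref{eq3.5.2}. \emph{Step 2 (higher regularity).} Take the scalar product with $A_{\vv}\vv^*$, which is a legitimate test function on each Galerkin subspace because $A_{\vv}$ commutes with the corresponding spectral projections. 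Using $(\de_t\vv^*,A_{\vv}\vv^*)=\tfrac12\ddt|A_{\vv}^{1/2}\vv^*|^2$ together with $|E_c\vv^*|\leq C|\vv^*|$ and the stronger of the two pressure bounds, $|E_p\theta|\leq C\|\theta\|$, Cauchy--Schwarz and Young's inequality absorb a small fraction of $|A_{\vv}\vv^*|^2$ from the right into the left, yielding after rescaling constants
\begin{equation*}
\ddt|A_{\vv}^{1/2}\vv^*|^2+|A_{\vv}\vv^*|^2\leq C|\vv^*|^2+C|\bs S_{\vv}|^2+C\|\theta\|^2.
\end{equation*}
Integrating in time, inserting the $L^2$ bound from Step 1 to control $\int_0^t|\vv^*|^2\,\dd s$, and invoking the equivalences $|A_{\vv}^{1/2}\vv_0|^2\lesssim\|\vv_0\|^2$ and $|A_{\vv}\vv^*|^2\gtrsim\|\vv^*\|_{H^2}^2$ stated just before \eqref{eq3.4.0}, one arrives at \eqref{eq3.5.2}.

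Passage to the limit $N\to\infty$ is routine: the estimates are $N$-independent, so up to a subsequence $\vv^*_N\rightharpoonup\vv^*$ weakly-$*$ in $L^\infty(0,\T;\V)$ and weakly in $L^2(0,\T;H^2)$, and the limit solves \eqref{eq3.4.1} by linearity; uniqueness follows by testing the homogeneous difference equation against the difference itself. The only delicate point is Step 2, namely verifying that the Coriolis and pressure terms can be controlled in the $\H$-norm rather than merely in $\V'$; this works because $E_c$ is a pointwise rotation so $|E_c\vv|\leq C|\vv|$, and because the first inequality in \eqref{eq2.3.6} upgrades $E_p$ to an $\H$-valued operator. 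Everything else is the standard linear parabolic machinery.
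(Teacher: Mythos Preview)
Your proof is correct and follows essentially the same two-step approach as the paper: first test with $\vv^*$ to get the basic energy bound via Gronwall, then test with $A_{\vv}\vv^*$ and use Young's inequality together with the bounds $|E_c\vv^*|\leq C|\vv^*|$ and $|E_p\theta|\leq C\|\theta\|$ to obtain the $H^1$--$H^2$ estimate, inserting Step~1 at the end. Your explicit discussion of the Galerkin justification and the passage to the limit is a helpful addition that the paper leaves implicit, and your observation that Step~2 only requires direct time integration (rather than a genuine Gronwall argument) is in fact more precise than the paper's phrasing.
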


\begin{proof}
Taking the $L^2$-scalar of \eqref{eq3.4.1}$_1$ with $\vv^*$ and using the estimates in \eqref{eq3.3.11}, we obtain
\begin{equation*}
  \frac{\dd}{\dd t} |\vv^*|^2 +\kappa_{\vv}\|\vv^*\|^2\leq  C\bigl(|\vv^*|^2+\|\theta\|^2\bigr)+|\boldsymbol S_{\vv}|^2,
\end{equation*}
which, by the Gronwall lemma, implies that
\begin{equation}\label{eq3.5.4}
  \sup_{t\in [0,\T]}|\vv^*(t)|^2+ \kappa_{\vv}\int_0^{\T}\|\vv^*(t)\|^2\dd t \leq C\e^{Ct_1}\left( |\vv_0|^2+\int_0^{\T}(|\boldsymbol S_{\vv}(t)|^2 +\|\theta(t)\|^2 )\dd t\right).
\end{equation}
We now multiply \eqref{eq3.4.1}$_1$ by $A_{\vv}\vv^*$ in $L^2$, and we find
\begin{equation}\begin{split}\label{eq3.5.5}
\frac12 \frac{\dd}{\dd t}|A_{\vv}^{1/2}\vv^*|^2 + |A_{\vv}\vv^*|^2 &= -e_p(\theta,{A_{\vv}\vv^*})-e_c(\vv^*,A_{\vv}\vv^*) - (\boldsymbol S_{\vv},A_{\vv}\vv^*)\\
&\leq C\norm{\theta}|A_{\vv}\vv^*| + C|\vv^*||A_{\vv}\vv^*| + |\boldsymbol S_{\vv}||A_{\vv}\vv^*|.
\end{split}\end{equation}
Using Young's inequality for the right-hand side of \eqref{eq3.5.5}, we arrive at
\begin{equation}
  \frac{\dd}{\dd t}|A_{\vv}^{1/2}\vv^*|^2 + |A_{\vv}\vv^*|^2 \leq C\big(\norm{\theta}^2 + |\vv^*|^2 +  |\boldsymbol S_{\vv}|^2\big),
\end{equation}
which, by the Gronwall lemma again, shows that
\begin{align*}
  \sup_{t\in [0,\T]}|A_{\vv}^{1/2}\vv^*(t)|^2+ \int_0^{\T}|A_{\vv}\vv^*(t)|^2\dd t &\leq C\e^{Ct_1}\|\vv_0\|^2\\
  &\quad+C\e^{Ct_1}\left( \int_0^{\T}(|\boldsymbol S_{\vv}(t)|^2 +|\vv^*(t)|^2+\|\theta(t)\|^2 )\dd t\right).
\end{align*}
The conclusion \eqref{eq3.5.2} then follows from \eqref{eq3.5.4}. 
\end{proof}

We now turn to the a priori estimate for $\vv'$ in \eqref{eq3.4.2}.

\begin{lemma}\label{lem3.5.2}
Assume that $\vv^*$ belongs to $L^\infty(0, \T; \V)\cap L^2(0, \T; H^2)$, 
and $\vv'$ is a solution to \eqref{eq3.4.2}. Then there exists $t_*>0$ ($t_*\leq \T$) such 
that $\vv'$ belongs to  $L^\infty(0,t_*;\V)\cap L^2(0,t_*;H^2)$ and satisfies
\begin{equation}\label{eq3.5.8}
  \sup_{t\in [0,t_*)}|A_{\vv}^{1/2}\vv'(t)|^2 + \int_0^{t_*}|A_{\vv}\vv'(t)|^2\dd t 
  \leq \mathcal{Q}_1\big(\norm{\vv^*}_{L^\infty(0,\T;\V) }, \norm{\vv^*}_{L^2(0,\T;H^2)}\big),
\end{equation}
where $\mathcal{Q}_1$ is a positive function.
\end{lemma}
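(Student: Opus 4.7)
The plan is to derive an $H^1$-level energy estimate for $\vv'$ by testing \eqref{eq3.4.2} against $A_\vv\vv'$ in $L^2$ and invoking the trilinear estimates of Lemma~\ref{lem2.1}. A critical contribution from $b(\vv',\vv',A_\vv\vv')$ will appear that can only be absorbed when $\|\vv'\|$ is small; since $\vv'(0)=0$, a continuity/bootstrap argument then furnishes a short time $t_*>0$ on which this smallness persists. The forcing will come from the already-controlled quantities $\|\vv^*\|_{L^\infty(0,\T;\V)}$ and $\|\vv^*\|_{L^2(0,\T;H^2)}$ supplied by Lemma~\ref{lem3.5.1}.

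Concretely, taking the $L^2$ scalar product of \eqref{eq3.4.2} with $A_\vv\vv'$, and using $\bigl(\de_t\vv',A_\vv\vv'\bigr)=\tfrac{1}{2}\ddt|A_\vv^{1/2}\vv'|^2$ together with $(A_\vv\vv',A_\vv\vv')=|A_\vv\vv'|^2$, I obtain
\begin{equation*}
\tfrac{1}{2}\ddt|A_\vv^{1/2}\vv'|^2+|A_\vv\vv'|^2=-b(\vv',\vv',A_\vv\vv')-b(\vv',\vv^*,A_\vv\vv')-b(\vv^*,\vv',A_\vv\vv')-b(\vv^*,\vv^*,A_\vv\vv')-e_c(\vv',A_\vv\vv').
\end{equation*}
Applying \eqref{eq2.3.9} with $\vv^\#=A_\vv\vv'$ (so $|\vv^\#|=|A_\vv\vv'|$) and the equivalence $\|\cdot\|_{H^2}\sim|A_\vv\cdot|$ gives
\begin{align*}
|b(\vv',\vv',A_\vv\vv')|&\leq C\|\vv'\|\,|A_\vv\vv'|^2,\\
|b(\vv',\vv^*,A_\vv\vv')|+|b(\vv^*,\vv',A_\vv\vv')|&\leq C\|\vv'\|^{1/2}|A_\vv\vv'|^{3/2}\|\vv^*\|^{1/2}\|\vv^*\|_{H^2}^{1/2},\\
|b(\vv^*,\vv^*,A_\vv\vv')|&\leq C\|\vv^*\|\,\|\vv^*\|_{H^2}\,|A_\vv\vv'|,
\end{align*}
while $|e_c(\vv',A_\vv\vv')|\leq C|\vv'||A_\vv\vv'|$ is trivial. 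All terms except the first are subcritical and can be absorbed into $\tfrac{1}{2}|A_\vv\vv'|^2$ via Young's inequality, producing a remainder of the form $g(t)\bigl(1+\|\vv'\|^2\bigr)$ with $g(t)=C\bigl(1+\|\vv^*(t)\|^2\|\vv^*(t)\|_{H^2}^2\bigr)$; by Lemma~\ref{lem3.5.1}, $g\in L^1(0,\T)$ with norm bounded by a function of $\|\vv^*\|_{L^\infty(0,\T;\V)}$ and $\|\vv^*\|_{L^2(0,\T;H^2)}$.

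Putting everything together yields the differential inequality
\begin{equation*}
\ddt\|\vv'\|^2+\bigl(1-C\|\vv'\|\bigr)|A_\vv\vv'|^2\leq g(t)\bigl(1+\|\vv'\|^2\bigr).
\end{equation*}
Define $t_*:=\sup\{t\in(0,\T]:C\|\vv'(s)\|\leq 1/2\ \text{for all}\ s\in[0,t]\}$; since $\vv'(0)=0$, continuity gives $t_*>0$. On $[0,t_*]$ the critical term is absorbed as well, and the Gronwall lemma produces the bound \eqref{eq3.5.8} with $\mathcal{Q}_1$ depending only on $\|\vv^*\|_{L^\infty(0,\T;\V)}$ and $\|\vv^*\|_{L^2(0,\T;H^2)}$. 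If necessary $t_*$ is shrunk further so that the resulting $L^\infty$ bound on $\|\vv'\|$ stays consistent with the defining smallness condition $C\|\vv'\|\leq 1/2$; the quantitative smallness of $g$ on $[0,t_*]$ (which is controlled by the $L^1$-norm of $g$ over the interval) makes this always possible.

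The main obstacle is precisely the critical trilinear term $b(\vv',\vv',A_\vv\vv')$: its best bound $C\|\vv'\||A_\vv\vv'|^2$ sits exactly at the borderline of absorption into $|A_\vv\vv'|^2$, so no unconditional $H^1$ estimate is available from the tools of Lemma~\ref{lem2.1}. This is a manifestation of the three-dimensional, Navier--Stokes-like character of the equation and is the essential reason for introducing the splitting $\vv=\vv^*+\vv'$ with zero initial condition on $\vv'$: the bootstrap from $\vv'(0)=0$ is what makes the critical absorption possible on a short interval $[0,t_*]$.
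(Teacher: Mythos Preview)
Your proof is correct and follows essentially the same approach as the paper: test \eqref{eq3.4.2} against $A_\vv\vv'$, use the trilinear estimate \eqref{eq2.3.9} to bound each $b$-term (isolating the borderline contribution $c_\star\|\vv'\|\,|A_\vv\vv'|^2$), and exploit $\vv'(0)=0$ via a smallness/continuity argument to absorb that critical term on a short interval. The only cosmetic difference is that the paper makes the choice of $t_*$ explicit---it takes $t_*=\min(\T,t_2)$ with $t_2$ the first time at which $\int_0^{t_2}\eta_2(s)\,\dd s$ reaches a fixed threshold depending on $\int_0^{\T}\eta_1$---whereas you phrase the same step as a bootstrap followed by ``shrink $t_*$ if necessary''; both lead to a $t_*$ depending only on $\|\vv^*\|_{L^\infty(0,\T;\V)}$ and $\|\vv^*\|_{L^2(0,\T;H^2)}$.
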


\begin{proof}
Multiplying \eqref{eq3.4.2}$_1$ by $A_{\vv}\vv'$ in $L^2$, we obtain that
\begin{equation}\label{eq3.5.9}
\begin{split}
\frac{1}{2}\frac{\dd}{\dd t}|A_{\vv}^{1/2}\vv'|^2 + \abs{A_{\vv}\vv'}^2=&-e_c(\vv',A_{\vv}\vv') - b(\vv',\vv^*,A_{\vv}\vv') - b(\vv^*,\vv',A_{\vv}\vv')\\
&-b(\vv^*,\vv^*,A_{\vv}\vv')-b(\vv',\vv',A_{\vv}\vv').
\end{split}
\end{equation}
Using Young's inequality and \eqref{eq2.3.9} in Lemma~\ref{lem2.1} for the $b$-terms, 
we bound each term in the right-hand side of \eqref{eq3.5.9} as follows: using 
the fact that the norm $\abs{A_{\vv}\vv'}$ is equivalent to the norm $\norm{\vv'}_{H^2}$, and that 
the norm $|A_{\vv}^{1/2}\vv'|$ is equivalent to the norm $\norm{\vv'}$,
we obtain
\begin{equation}\label{eq3.5.9a}
\begin{split}
&|e_c(\vv',A_{\vv}\vv')| \leq \frac{1}{12}\abs{A_{\vv}\vv'}^2 + C\abs{\vv'}^2,\\
&|b(\vv',\vv^*,A_{\vv}\vv')| \leq C\norm{\vv'}^{1/2}\norm{\vv^*}^{1/2}\norm{\vv^*}_{H^2}^{1/2}\abs{A_{\vv}\vv'}^{3/2}\leq \frac{1}{12}\abs{A_{\vv}\vv'}^2 + C\norm{\vv'}^{2}\norm{\vv^*}^{2}\norm{\vv^*}_{H^2}^2,\\
&|b(\vv^*,\vv',A_{\vv}\vv')|\leq  \frac{1}{12}\abs{A_{\vv}\vv'}^2 + C\norm{\vv'}^{2}\norm{\vv^*}^{2}\norm{\vv^*}_{H^2}^2,\\
&|b(\vv^*,\vv^*,A_{\vv}\vv')|\leq C\norm{\vv^*}\norm{\vv^*}_{H^2}\abs{A_{\vv}\vv'}\leq\frac{1}{12}\abs{A_{\vv}\vv'}^2 + C\norm{\vv^*}^2\norm{\vv^*}_{H^2}^2,\\
&|b(\vv',\vv',A_{\vv}\vv')|\leq c_\star\norm{\vv'} \abs{A_{\vv}\vv'}^2.
\end{split}
\end{equation}
Taking all these bounds into account, we infer from \eqref{eq3.5.9} that
\begin{equation}\label{eq3.5.10}
\frac{\dd}{\dd t}|A_{\vv}^{1/2}\vv'|^2 +(1-c_\star|A_{\vv}^{1/2}\vv'|) \abs{A_{\vv}\vv'}^2 \leq \eta_1(t)|A_{\vv}^{1/2}\vv'|^2 + \eta_2(t),
\end{equation}
with
$$
\eta_1(t)=c+\eta_2(t),\quad\quad \eta_2(t) = c\norm{\vv^*}^{2}\norm{\vv^*}_{H^2}^2.
$$
As long as $1-c_\star|A_{\vv}^{1/2}\vv'|\geq 1/2$, that is
$$ 
|A_{\vv}^{1/2}\vv'|\leq \frac{1}{2c_\star},
$$
we then have, by Gronwall lemma and since $\vv'(0)=0$, on some interval of time $(0,t_*)$:
$$ 
|A_{\vv}^{1/2}\vv'|^2 \leq \int_0^t\eta_2(t) \e^{\int_\tau^t \eta_1(s)\dd s}\dd\tau,\quad \forall\,t\in(0,t_*).
$$
In fact, $t_*>0$ can be chosen as the minimum between $\T$ and $t_2$, where $t_2$ is either $+\infty$ or the first time at which
$$
\int_0^{t_2} \eta_2(s)\dd s = \frac{1}{4c_\star^2}\e^{-\int_0^{\T} \eta_1(s)\dd s}.
$$
In this way, we will then  have
$$
|A_{\vv}^{1/2}\vv'(t)|^2\leq \frac{1}{4c_\star^2},\quad\quad \forall\,t\in(0,t_*),
$$
and returning to \eqref{eq3.5.10} we find the estimate \eqref{eq3.5.8}.
\end{proof}

We now study the nonlinear equations \eqref{eq3.4.0} for $\vv$ and we have the following.

\begin{lemma}\label{lem3.5.3}
Assume that $\vv_0\in \V,\theta\in L^2(0,\T;H^1)$ and $\boldsymbol S_{\vv}\in L^2(0,\T;\H)$.
Then there exist $t_*>0$ ($t_*\leq \T$) and a unique solution $\vv$ to \eqref{eq3.4.0} 
which belongs to $L^\infty(0,t_*;\V)\cap L^2(0,t_*;H^2)$ and satisfies
\begin{equation}\label{eq3.5.11}
  \sup_{t\in [0,t_*)}|A_{\vv}^{1/2}\vv(t)|^2 + \int_0^{t_*}|A_{\vv}\vv(t)|^2\dd t 
  \leq \mathcal{Q}_2\big(t_*,\norm{\vv_0},\norm{\theta}_{L^2(0,\T;H^1)},\norm{\boldsymbol S_{\vv}}_{L^2(0,\T;L^2)}\big),
\end{equation}
where $\mathcal{Q}_2$ is a positive function, increasing in all arguments.
\end{lemma}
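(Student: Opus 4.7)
The plan is to build $\vv$ via the decomposition $\vv = \vv^* + \vv'$ introduced with \eqref{eq3.4.1}--\eqref{eq3.4.2}, leveraging the a priori estimates already established in Lemmas~\ref{lem3.5.1} and \ref{lem3.5.2}. First, solve the linear problem \eqref{eq3.4.1} for $\vv^*$ by Galerkin approximation on the eigenbasis $\{e_j\}$ from Lemma~\ref{lem3.2}: the finite-dimensional truncations are globally well-posed since the system is linear with source $\boldsymbol S_{\vv}-E_p\theta \in L^2(0,\T;L^2)$, and Lemma~\ref{lem3.5.1} supplies the uniform bounds needed to extract a weak-$\ast$ limit in $L^\infty(0,\T;\V)\cap L^2(0,\T;H^2)$ satisfying \eqref{eq3.5.2}. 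Uniqueness at this linear step is immediate by subtracting two solutions and applying Gronwall.

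Next, construct $\vv'$ as a solution of \eqref{eq3.4.2} with $\vv'(0)=0$ on $(0,t_*)$, with $t_*>0$ chosen exactly as in the proof of Lemma~\ref{lem3.5.2}. Galerkin approximations $\vv'_n$ in the subspace spanned by $\{e_1,\ldots,e_n\}$ are locally well-posed as ODEs; the estimate \eqref{eq3.5.8} carries over uniformly in $n$ because the bound $|A_{\vv}^{1/2}\vv'_n|\leq 1/(2c_\star)$ is propagated by the same nonlinear absorption argument, and an estimate on $\partial_t\vv'_n$ in $L^2(0,t_*;\H)$ follows from the equation using the trilinear bounds of Lemma~\ref{lem2.1} together with the $L^2(0,t_*;H^2)$ control on $\vv'_n$ and $\vv^*$. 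An Aubin--Lions compactness argument then yields strong convergence of $\vv'_n$ in $L^2(0,t_*;\V)$, which is enough to pass to the limit in all nonlinear terms (each of which is at most quadratic in $\vv'_n$, linear in $\vv^*$). Setting $\vv = \vv^* + \vv'$ yields a solution of \eqref{eq3.4.0} in the desired class, and assembling \eqref{eq3.5.2} with \eqref{eq3.5.8} furnishes \eqref{eq3.5.11} with $\mathcal{Q}_2$ obtained by composing $\mathcal{Q}_1$ with the right-hand side of \eqref{eq3.5.2}.

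For uniqueness, let $\vv_1,\vv_2$ be two solutions in $L^\infty(0,t_*;\V)\cap L^2(0,t_*;H^2)$ with identical data, and set $\mathbf{w}=\vv_1-\vv_2$. Then $\mathbf{w}(0)=0$ and
\begin{equation*}
\partial_t \mathbf{w} + A_{\vv}\mathbf{w} + B_{\vv}(\mathbf{w},\vv_1) + B_{\vv}(\vv_2,\mathbf{w}) + E_c\mathbf{w} = 0.
\end{equation*}
Pairing with $\mathbf{w}$ and using $b(\vv_2,\mathbf{w},\mathbf{w})=0$ from \eqref{eq2.3.8}, it remains to estimate $b(\mathbf{w},\vv_1,\mathbf{w})$; applying \eqref{eq2.3.9} with $\vv=\mathbf{w}$, $\tilde\vv=\vv_1$, $\vv^\#=\mathbf{w}$ and Young's inequality gives
\begin{equation*}
|b(\mathbf{w},\vv_1,\mathbf{w})| \leq C\|\mathbf{w}\|^{1/2}\|\mathbf{w}\|_{H^2}^{1/2}\|\vv_1\|^{1/2}\|\vv_1\|_{H^2}^{1/2}|\mathbf{w}|,
\end{equation*}
after which the standard primitive-equations trick of pairing also with $A_{\vv}\mathbf{w}$ (or equivalently controlling $\|\mathbf{w}\|_{H^2}$ on $L^2$ in time via the equation itself, as in \cite{CT06}) closes an inequality of the form $\ddt|\mathbf{w}|^2 \leq \phi(t)|\mathbf{w}|^2$ with $\phi\in L^1(0,t_*)$. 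Gronwall then forces $\mathbf{w}\equiv 0$.

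The main obstacle will be carrying out the Galerkin limit for $\vv'$ cleanly: one must show that the quasilinear absorption argument behind \eqref{eq3.5.10} survives at the discrete level on $(0,t_*)$, and that the time derivative bound is strong enough to produce compactness for the trilinear term $b(\vv'_n,\vv'_n,\cdot)$. The smallness of $|A_{\vv}^{1/2}\vv'_n|$ built into the choice of $t_*$ is precisely what makes this possible and ensures that the estimates are independent of $n$.
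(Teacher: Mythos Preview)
Your existence argument is the same as the paper's: decompose $\vv=\vv^*+\vv'$, invoke Lemmas~\ref{lem3.5.1}--\ref{lem3.5.2} at the Galerkin level, and assemble \eqref{eq3.5.2} and \eqref{eq3.5.8} into \eqref{eq3.5.11}.

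For uniqueness the paper is more direct than you are, and your write-up contains a small inconsistency. The paper pairs the difference equation with $A_{\vv}\tilde\vv$ from the outset and reuses the bounds \eqref{eq3.5.9a} to get
\[
\ddt|A_{\vv}^{1/2}\tilde\vv|^2 + |A_{\vv}\tilde\vv|^2 \le \eta(t)\,|A_{\vv}^{1/2}\tilde\vv|^2,\qquad
\eta(t)=c\bigl(1+\|\vv_1\|^2\|\vv_1\|_{H^2}^2+\|\vv_2\|^2\|\vv_2\|_{H^2}^2\bigr)\in L^1(0,t_*),
\]
and concludes by Gronwall. Your detour through the $L^2$ pairing does not close on its own: the $\omega(\mathbf w)\,\de_p\vv_1$ part of $b(\mathbf w,\vv_1,\mathbf w)$ forces an $H^2$ norm on $\mathbf w$ (via $\|\nabla\cdot\mathbf w\|_{L^4_{x,y}L^2_p}$), so you correctly fall back to testing with $A_{\vv}\mathbf w$. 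But then the resulting inequality is for $|A_{\vv}^{1/2}\mathbf w|^2$, not for $|\mathbf w|^2$ as you wrote, and the cancellation $b(\vv_2,\mathbf w,\mathbf w)=0$ from \eqref{eq2.3.8} is no longer available; instead both $b(\mathbf w,\vv_1,A_{\vv}\mathbf w)$ and $b(\vv_2,\mathbf w,A_{\vv}\mathbf w)$ are handled exactly as in \eqref{eq3.5.9a}. With that correction your argument coincides with the paper's.
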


\begin{proof}
The existence of $\vv$ of \eqref{eq3.4.0} follows from the existence of $\vv^*$, solution 
to \eqref{eq3.4.1}, and $\vv'$ solution to \eqref{eq3.4.2}, based on the standard Galerkin approximation procedure.
Moreover estimate \eqref{eq3.5.11}  follows from the estimates \eqref{eq3.5.2} and \eqref{eq3.5.8}. 
We are left to prove the uniqueness.

Consider two solutions $\vv_1,\vv_2$ of \eqref{eq3.4.0} belonging to $L^\infty(0,t_*;\V)\cap L^2(0,t_*;H^2)$, and let $\tilde\vv=\vv_1-\vv_2$. Then $\tilde\vv$ satisfies
\begin{equation}\begin{cases}\label{eq3.5.12}
\displaystyle \frac{\dd \tilde\vv}{\dd t} + A_{\vv}\tilde\vv + B_{\vv}(\tilde\vv,\vv_1)+B(\vv_2,\tilde\vv) + E_c(\tilde\vv) =  0,\\
\tilde\vv(0)=0.
\end{cases}\end{equation}
Taking the $L^2$-scalar product between \eqref{eq3.5.12}$_1$ and $A_{\vv}\tilde\vv$ and using similar estimates 
in \eqref{eq3.5.9a} for the $b,e_c$ terms, we obtain
\begin{equation}
  \frac{\dd }{\dd t}|A_{\vv}^{1/2}\tilde\vv|^2 + |A_{\vv}\tilde\vv|^2 \leq \eta(t)|A_{\vv}^{1/2}\tilde\vv|^2,
\end{equation}
where 
$$
\eta(t)=c\big(1+\|\vv_1\|^2\|\vv_1\|_{H^2}^2 + \|\vv_2\|^2\|\vv_2\|_{H^2}^2\big)\in L^1(0,t_*),
$$
for some $c>0$. The uniqueness follows by the Gronwall lemma.
\end{proof}

\begin{remark}
We remark that the equations \eqref{eq3.4.0}--\eqref{eq3.4.2} are independent of $\eps$, and thus the estimates \eqref{eq3.5.2}, \eqref{eq3.5.8}, and \eqref{eq3.5.11} are independent of $\eps$ and so is the choice of $t_*$ in Lemmas \ref{lem3.5.2}--\ref{lem3.5.3}.
\end{remark}

\subsection{Local well-posedness of \eqref{eq3.1.3}}
With the a priori estimates in Lemmas~\ref{lem3.3.1}--\ref{lem3.5.3} at hand, we are now ready to prove Theorem~\ref{thm3.1}.
\begin{proof}[Proof of Theorem~\ref{thm3.1}]
Instead of solving \eqref{eq3.1.3}, we solve \eqref{eq3.1.3}$_{2,3}$ coupled (through $\theta$) with 
\eqref{eq3.4.1}--\eqref{eq3.4.2}. We apply the standard Galerkin approximation procedure for the 
unknowns $(\vv^*,\vv',\theta,q)$ using the eigenvectors of $A$ introduced in \eqref{eq:eigenvalues}. 
The a prori estimates \eqref{eq3.3.10}, \eqref{eq3.5.2}, and \eqref{eq3.5.8} show that we have uniform 
bounds independent of $\eps$. It is then straightforward to pass to the limit, and we obtain the existence 
of $(\vv^\eps{^*},\vv^\eps{'},\theta^\eps,q^\eps)$ for \eqref{eq3.4.1}--\eqref{eq3.4.2}, \eqref{eq3.1.3}$_{2,3}$ 
and hence the existence of $(\vv^\eps,\theta^\eps,q^\eps)$ for \eqref{eq3.1.3} by letting $\vv^\eps=\vv^\eps{^*}+\vv^\eps{'}$. 

The first estimate \eqref{eq:unifesteps}$_1$ then follows from the a priori estimates \eqref{eq3.3.10} and \eqref{eq3.5.11}.
We are left to prove the second estimate \eqref{eq:unifesteps}$_2$ on the time-derivates. Given $\tilde\vv\in L^2(0,t_*;\H)$ 
with $\|\tilde\vv\|_{L^2(0,t_*;\H)}\leq 1$, from
\eqref{eq3.1.3}$_1$ we infer that
$$
|(\de_t\vv^\eps,\tilde\vv)|\leq C\big[ \|\vv^\eps\|_{H^2}|\tilde\vv|+\|\vv^\eps\|\|\vv^\eps\|_{H^2}|\tilde\vv|+\|\theta^\eps\||\tilde\vv|
+|\vv^\eps||\tilde\vv|+|\boldsymbol S_{\vv}||\tilde\vv|\big].
$$
Integrating in time on $(0,t_*)$ and using Young's inequality then gives
$$
\de_t\vv^\eps\in L^2(0,t_*;\H),
$$
thanks to the estimate  \eqref{eq:unifesteps}$_1$. Regarding $\de_t\theta^\eps$, we take a test function 
$\tilde\theta\in L^2(0,t_*;H^1)$ with norm at most 1. Thanks to \eqref{eq2.3.9b} and \eqref{eq2.3.8},
we obtain from \eqref{eq3.1.3}$_2$ that
\begin{align*}
|\la\de_t\theta^\eps,\tilde\theta\ra|&\leq C\big[ \|\theta^\eps\|\|\tilde\theta\|+\|\vv^\eps\|^{1/2}\|\vv^\eps\|^{1/2}_{H^2}|\theta^\eps|^{1/2}\|\theta^\eps\|^{1/2}\|\tilde\theta\|
+\|\vv^\eps\||\tilde\theta|+|S_{\theta}||\tilde\theta|\big]\\
&\leq C\big[ \|\theta^\eps\|^2(1+|\theta^\eps|^2)+\|\vv^\eps\|^2(1+\|\vv^\eps\|^2_{H^2})+|S_{\theta}|^2+\|\tilde\theta\|^2\big].
\end{align*}
Similarly as for $\de_t\vv^\eps$, we integrate in time on $(0,t_*)$ and use Young's inequality; we arrive at
$$
\de_t\theta^\eps\in L^{2}(0,t_*;(H^1)').
$$
The argument for $\de_t q^\eps$ can be repeated word for word, allowing us to conclude the 
estimates \eqref{eq:unifesteps}$_2$. As we have already observed, all these estimates are actually implemented 
using  a Galerkin method based on the eigenvectors \eqref{eq:eigenvalues} of $A$, the proof of 
Theorem~\ref{thm3.1} is thus concluded.
\end{proof}

\section{Local existence of quasi-strong solutions}\label{sec:localquasi}

\noindent We provide here a proof of the local  existence of quasi-strong solutions of the $(\vv,\theta,q)$ system by passing
to the limit in the approximated problem \eqref{eq3.1.3} as $\eps\to 0$. The treatment of the potential temperature and the specific 
humidity equations resembles that of \cite{CT12}. As a straightforward consequence, we deduce a local existence 
result for the $(\vv,T,q)$ system as well.

\subsection{Passage to the limit as $\eps \to 0$}
Thanks to the fact that the estimates \eqref{eq:unifesteps} in Theorem \ref{thm3.1} are uniform in $\eps$ (the bounds are independent of $\eps$),
we infer the existence of a triplet $(\vv,\theta, q)$ such that 
$$
	\vv\in C([0,t_*);\V)\cap L^2(0,t_*;H^2), \qquad \de_t \vv\in L^2(0,t_*;\H),
$$	
and
$$ 
\theta,q\in C([0,t_*);L^2)\cap L^2(0,t_*;H^1), \qquad
\de_t\theta,\de_t q\in L^{2}(0,t_*;(H^1)'),
$$
for which the following convergences up to not relabeled subsequences  are true. As customary, 
$\to$, $\rightharpoonup$, and $\stackrel{*}{\rightharpoonup}$ indicate strong, weak, and weak-$*$ convergence as $\eps\to 0$, respectively:

\begin{itemize}
	\item $\vv^\eps\rightharpoonup \vv$  in $L^2(0,t_*;H^2)$ and $\de_t \vv^\eps \rightharpoonup \de_t \vv$ in $L^2(0,t_*;\H)$.
	As a consequence (see \cite{Lio69}), $\vv^\eps\to \vv$ in $L^2(0,t_*;H^{3/2})$.
	\item $\omega^\eps\to \omega$ in $L^2(0,t_*;H^{1/2})$, which follows from the expression \eqref{eq2.1.5} for $\omega$.
	\item $ (\theta^\eps,q^\eps)\rightharpoonup  (\theta,q)$ in $L^2(0,t_* ;H^1\times H^1)$ and
	$ \de_t(\theta^\eps,q^\eps)\rightharpoonup  \de_t (\theta,q)$ in $L^{2}(0,t_* ;(H^1)'\times (H^1)')$. Therefore,
	$(\theta^\eps,q^\eps)\to  (\theta,q)$ in $L^2(0,t_* ;L^2\times L^2)$.
	
	\item $H_\eps(q^\eps-q_s)\stackrel{*}{\rightharpoonup} h_q$ in $L^\infty(\M\times[0,t_*])$.
\end{itemize}
By interpolation, we also have  $\vv\in C([0,t_*);\V)$.
We now consider test functions of the form $(\tilde\vv,\tilde\theta,\tilde q)\varphi(t)$, where 
$(\tilde\vv,\tilde\theta,\tilde q)\in \H\times H^1\times H^1$
and $\varphi$ in $C^1([0,t_*])$ is a scalar function such that $\varphi(t_*)=0$. 
We take the $L^2$-scalar product for \eqref{eq3.1.3} 
with $(\tilde\vv,\tilde\theta,\tilde q)\varphi(t)$, integrate in time from $0$ to $t_*$ and integrate by parts for the first term, to arrive at
$$
\begin{aligned}
&\int_0^{t_*}-\la\vv^\eps, \tilde\vv\ra \de_t\varphi + \bigl(a_{\vv}(\vv^\eps,\tilde\vv) + b(\vv^\eps,\vv^\eps,\tilde\vv)+e_p(\theta^\eps,\tilde\vv)+e_c(\vv^\eps,\tilde\vv)\bigr)\varphi \,\dd t\\
&\qquad=\la \vv_0,\tilde\vv\ra\varphi(0)+\int_0^{t_*}(\boldsymbol S_{\vv},\tilde\vv)\varphi\,\dd t,\\
&\int_0^{t_*}-\la\theta^\eps, \tilde\theta\ra \de_t\varphi + \bigl(a_{\theta}(\theta^\eps,\tilde\theta)+ b(\vv^\eps,\theta^\eps,\tilde\theta)+m_\theta(\theta^\eps,\tilde\theta)\bigr)\varphi\, \dd t \\
&\qquad=\la \theta_0,\tilde\theta\ra\varphi(0)+
\int_0^{t_*}\left(\left( \frac{L}{c_p} \left(\frac{p_0}{p}\right)^{\frac{R}{c_p}} D_\eps(\omega^\eps, \theta^\eps,q^\eps),\tilde\theta\right)+(S_{\theta},\tilde\theta)\right)\varphi \,\dd t,\\
&\int_0^{t_*}-\la q^\eps, \tilde q\ra \de_t\varphi +\bigl(a_{q}(q^\eps,\tilde q)+ b(\vv^\eps,q^\eps,\tilde q)\bigr)\varphi\,\dd t\\
&\qquad =\la q_0,\tilde q\ra\varphi(0) +\int_0^{t_*}\bigl(-(D_\eps(\omega^\eps, \theta^\eps,q^\eps),\tilde q)+(S_{q},\tilde q)\bigr)\varphi \,\dd t,
\end{aligned}
$$
The only problematic terms are the nonlinear ones, as the linear terms converge to their corresponding limits in a straightforward manner due to the above convergences. We start with the term
$$
\int_0^{t_*}\left( \frac{L}{c_p} \left(\frac{p_0}{p}\right)^{\frac{R}{c_p}} D_\eps(\omega^\eps, \theta^\eps,q^\eps),\tilde\theta\right)\varphi \,\dd t.
$$
We have
\begin{align*}
&\int_0^{t_*}\left( \frac{L}{c_p} \left(\frac{p_0}{p}\right)^{\frac{R}{c_p}} D_\eps(\omega^\eps, \theta^\eps,q^\eps),\tilde\theta\right)\varphi \,\dd t
-\int_0^{t_*}\left( \frac{L}{c_p} \left(\frac{p_0}{p}\right)^{\frac{R}{c_p}} D(\omega, \theta,h_q),\tilde\theta\right)\varphi \,\dd t\\
&\quad=\int_0^{t_*}\left( \frac{L}{c_p} \left(\frac{p_0}{p}\right)^{\frac{R}{c_p}} \omega^\eps{^-} H_\eps(q^\eps-q_s) (\tF (p,\theta^\eps) -\tF (p,\theta) ),\tilde\theta\right)\varphi \,\dd t\\
&\qquad+\int_0^{t_*}\left( \frac{L}{c_p} \left(\frac{p_0}{p}\right)^{\frac{R}{c_p}} (\omega^\eps{^-}-\omega^-) H_\eps(q^\eps-q_s) \tF (p,\theta),\tilde\theta\right)\varphi \,\dd t\\
&\qquad+\int_0^{t_*}\left( \frac{L}{c_p} \left(\frac{p_0}{p}\right)^{\frac{R}{c_p}} \omega^- (H_\eps(q^\eps-q_s) - h_q)\tF (p,\theta),\tilde\theta\right)\varphi \,\dd t\\
&\qquad=J_1+J_2+J_3.
\end{align*}
Using the boundedness of $H_\eps$ (see \eqref{eq:Hlip}) and 
the Lipschitz condition on $\tF(p,\cdot)$ (see \eqref{eq:tF}), we bound the term $J_1$ as
\begin{equation}
\begin{split}
\abs{J_1} &\leq c\int_0^{t_*}\int_\M |\omega^\eps| |\theta^\eps-\theta| |\tilde\theta| \,\dd\M \dd t \\
&\leq c\int_0^{t_*}\|\omega^\eps(t)\|_{L^3} |\theta^\eps(t)-\theta(t)|\|\tilde\theta\|_{L^6}\, \dd t\\
&\leq c\|\tilde\theta\| \int_0^{t_*} \|\vv^\eps\|_{H^2} | \theta^\eps(t)-\theta(t)|  \,\dd t\\
&\leq c \norm{\vv^\eps}_{L^2(0,t_*;H^2)}\norm{\theta^\eps-\theta}_{L^2(0,t_*;L^2)} \|\tilde\theta\|,
\end{split}
\end{equation}
which converges to $0$ as $\eps\to 0$ thanks to the boundedness of $\vv^\eps$ and the strong convergence of $\theta^\eps$. 
As a preliminary, using the Sobolev embedding $H^{1/2}\subset L^3$, we obtain 
\begin{equation}\label{eq4.4.e1}
	\omega^\eps\rightarrow \omega,\text{ strongly in }L^2(0,t_*;L^3),
\end{equation}
and thus also in $L^2(0,t_*;L^2)$.
Now, for the second term $J_2$, using the boundedness of $H_\eps$ and $\tF$ (see \eqref{eq:Hlip} and \eqref{eq:tF}), we obtain
\begin{equation}
|J_2| \leq c\int_0^{t_*}\int_\M | \omega^\eps{^-}-\omega^- ||\tilde\theta|  \,\dd \M \dd t\leq c\norm{\omega^\eps{^-}-\omega^-  }_{L^2(0,t_*;H)}|\tilde\theta|,
\end{equation}
which converges to $0$ as $\eps\to 0$ by using \eqref{eq4.4.e1}. 
For the last term $J_3$, we observe that $\omega^-\tF(p,\theta)\tilde\theta\varphi$ belongs to $L^1(\M\times(0,t_*))$. Hence, the weak-$*$ convergence of $H_\eps(q_\eps-q_s)$ is enough to pass to the limit.  
Therefore, as $\eps\to 0$,
$$
\int_0^{t_*}\left( \frac{L}{c_p} \left(\frac{p_0}{p}\right)^{\frac{R}{c_p}} D_\eps(\omega^\eps, \theta^\eps,q^\eps),\tilde\theta\right)\varphi \,\dd t\to
\int_0^{t_*}\left( \frac{L}{c_p} \left(\frac{p_0}{p}\right)^{\frac{R}{c_p}} D(\omega, \theta,h_q),\tilde\theta\right)\varphi \,\dd t.
$$
The analogous term in the $q$-equation converges in the same exact way.

We now turn to the trilinear term $b$. Considering the typical most problematic term, we have that, as $\eps\to 0$,
$$
\int_0^{t_*}\int_\M \omega^\eps\de_p \vv^\eps\tilde\vv\varphi\,\dd\M \dd t\to 
\int_0^{t_*}\int_\M \omega\de_p \vv\tilde\vv\varphi\,\dd\M \dd t,
$$
where we used \eqref{eq4.4.e1} and that $\de_p\vv^\eps$ converges weakly to $\de_p\vv$ in $L^2(0,t_*; L^6)$ by the Sobolev embedding $H^1\subset L^6$.
The other terms in $b$ are similar or simpler. Therefore, we conclude that, as $\eps\to 0$,
$$
\int_0^{t_*}b(\vv^\eps,\vv^\eps,\tilde\vv)\varphi \,\dd t\to \int_0^{t_*}b(\vv,\vv,\tilde\vv)\varphi \,\dd t.
$$
We are left with the trilinear form $b$ involving $\theta$ (the one involving $q$ is exactly the same). 
Similarly, we consider the typical most problematic term, and hence, it is enough to show that
$$
\int_0^{t_*}\int_\M \omega^\eps\de_p \theta^\eps\tilde\theta\varphi\,\dd\M \dd t\to 
\int_0^{t_*}\int_\M \omega\de_p \theta\tilde\theta\varphi\,\dd\M \dd t.
$$
We have
\begin{align*}
&\int_0^{t_*}\int_\M \bigl(\omega^\eps\de_p \theta^\eps-\omega\de_p \theta\bigr)\tilde\theta\varphi\,\dd\M \dd t \\
&\quad=\int_0^{t_*}\int_\M \bigl(\omega^\eps-\omega\bigr)\de_p \theta^\eps \tilde\theta\varphi\,\dd\M \dd t+
\int_0^{t_*}\int_\M \omega\bigl(\de_p \theta^\eps-\de_p \theta\bigr)\tilde\theta\varphi\,\dd\M \dd t,
\end{align*}
with 
\begin{align*}
\left|\int_0^{t_*}\int_\M \bigl(\omega^\eps-\omega\bigr)\de_p \theta^\eps \tilde\theta\varphi\,\dd\M \dd t\right|&\leq
c\int_0^{t_*}\|\omega^\eps-\omega\|_{L^3}|\de_p \theta^\eps|\|\tilde\theta\|\dd t\\
&\leq c   \|\omega^\eps-\omega\|_{L^2(0,t_*;L^3)} \|\theta^\eps\|_{L^2(0,t_*;H^1)}\|\tilde\theta\|\to 0\qquad (\eps\to 0),
\end{align*}
and
\begin{align*}
\left|\int_0^{t_*}\int_\M \omega\bigl(\de_p \theta^\eps-\de_p \theta\bigr)\tilde\theta\varphi\,\dd\M \dd t\right|\to 0 \qquad (\eps\to 0),
\end{align*}
as $\de_p \theta^\eps\rightharpoonup \de_p \theta$ in $L^2(0,t_*;L^2)$ and $\omega \tilde\theta \varphi\in L^2(0,t_*;L^2)$.
Indeed, $\omega\in L^2(0,t_*;L^3)$ and $\tilde\theta \varphi\in L^\infty(0,t_*;L^6)$.

It remains to show that $h_q$ belongs to $H(q-q_s)$ in the weak sense specified by the variational inequality \eqref{eq2.3.12}.
This has been already proved in \cite{CT12}, but we sketch the argument here for 
the sake of completeness.  
For every $\eps>0$, we have an approximate variational inequality 
\begin{equation}\label{eq:approxineq}
\int_0^{t_*}( K_\eps(\tilde q-q_s),1)\dd t- \int_0^{t_*}( K_\eps(q_\eps-q_s),1)\dd t \geq \int_0^{t_*} \la H_\eps(q_\eps-q_s),\tilde q-q_\eps\ra\dd t,
\end{equation}
for each $\tilde q\in L^2(0,t_*; H^1)$, since $H_\eps(q_\eps-q_s)$ is the G\^ateaux derivative of the convex function 
$$
\int_0^{t_*}(K_\eps(\cdot),1)\dd t\colon L^2(0,t_*;V)\to \R 
$$
at the point $q_\eps-q_s$ (see \eqref{eq:HepsK}). From the weak-$*$ convergence $H_\eps(q_\eps-q_s) \stackrel{*}{\rightharpoonup} h_q$
in $L^\infty(\M\times [0,t_*])$ and the strong convergence $q_\eps\to q$ in $L^2(0,t_*; H)$ we find that 
$$
\int_0^{t_*}\la H_\eps(q_\eps-q_s),q_\eps-\tilde q\ra\dd t\to \int_0^{t_*}\la h_q,q-\tilde q\ra\dd t,\qquad \forall \tilde q\in L^2(0,t_*;V),
$$
as $\eps\to 0$.
Moreover, owing to \eqref{eq:Klip} and \eqref{eq:Kclos}, we observe that
\begin{align*}
 &\Big|\int_0^{t_*}( K_\eps(q_\eps-q_s),1)\dd t- \int_0^{t_*}( [q-q_s]^+,1)\dd t  \Big|   \\
 &\qquad \leq\int_0^{t_*}( |K_\eps(q_\eps-q_s)-K_\eps(q-q_s)|,1)\dd t+ \int_0^{t_*}( |K_\eps(q-q_s)-[q-q_s]^+|,1)\dd t  \\
 &\qquad \leq |\M|^{1/2}t_*^{1/2}\|q_\eps-q\|_{L^2(0,t_*;H)}+ \frac{\eps}{2}|\M|t_*.
\end{align*}
Therefore,
$$
\lim_{\eps\to 0} \int_0^{t_*}( K_\eps(q_\eps-q_s),1)\dd t= \int_0^{t_*}( [q-q_s]^+,1)\dd t  .
$$
From the calculation above, it is also clear that 
$$
\lim_{\eps\to 0} \int_0^{t_*}( K_\eps(\tilde q-q_s),1)\dd t= \int_0^{t_*}( [\tilde q-q_s]^+,1)\dd t , \qquad  \tilde q\in L^2(0,t_*;V).
$$
Consequently, we can pass to the limit as $\eps\to 0$ in \eqref{eq:approxineq}, concluding that
$$
\int_0^{t_*}\la [\tilde q-q_s)]^+,1\ra\dd t-\int_0^{t_*}\la [q-q_s]^+,1\ra\dd t \geq \int_0^{t_*} \la h_q,\tilde q-q\ra\dd t , \qquad \forall \tilde q\in L^2(0,t_*;V).
$$
Again, this implies in particular that
$$
([\tilde q-q_s]^+,1)- ([q-q_s]^+,1)\geq  \la h_q, \tilde q-q\ra,
$$
for every $\tilde q\in V$ and a.e. $t\in (0,t_*]$, as desired.
We have proved the following statement.

\begin{theorem}\label{thm3.2}
Let $\vv_0\in \V$ and $\theta_0,q_0\in L^2$.
Assume $(\boldsymbol S_{\vv},S_\theta,S_q)\in L^2(0,\T;H)$, $q_s\in(0,1)$ 
and $\T >0$.  There exists $t_*>0$ ($t_*\leq\T$) and  
a \emph{quasi-strong} solution to \eqref{eq2.1.7b}--\eqref{eq2.1.15b}  such that 
\begin{alignat*}{2}
&\vv\in C([0,t_*);\V)\cap L^2(0,t_*;H^2), \qquad & &\theta,q\in C([0,t_*);L^2)\cap L^2(0,t_*;H^1),\\
&\de_t\vv\in  L^2(0,t_*;\H), \qquad & &  \de_t\theta,\de_t q\in  L^{2}(0,t_*;(H^1)').
\end{alignat*}
\end{theorem}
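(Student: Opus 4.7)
The plan is to start from the family of approximate quasi-strong solutions $(\vv^\eps,\theta^\eps,q^\eps)$ to \eqref{eq3.1.3} produced by Theorem~\ref{thm3.1}, together with the uniform-in-$\eps$ bounds \eqref{eq:unifesteps}. Banach--Alaoglu yields subsequences (not relabeled) with weak limits in $L^2(0,t_*;H^2\times H^1\times H^1)$ for $(\vv^\eps,\theta^\eps,q^\eps)$, weak limits in $L^2(0,t_*;\H\times (H^1)'\times (H^1)')$ for the time derivatives, and weak-$*$ limits in the corresponding $L^\infty$-$L^2$ spaces. Since $|H_\eps(q^\eps-q_s)|\le 1$ by \eqref{eq:Hlip}, I can also extract a weak-$*$ limit $h_q$ of that sequence in $L^\infty(\M\times(0,t_*))$. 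An Aubin--Lions argument then upgrades these to strong convergences $\vv^\eps\to\vv$ in $L^2(0,t_*;H^{3/2})$ and $(\theta^\eps,q^\eps)\to(\theta,q)$ in $L^2(0,t_*;L^2\times L^2)$; combining with \eqref{eq2.1.5} and the Sobolev embedding $H^{1/2}\hookrightarrow L^3$, I infer $\omega^\eps\to\omega$ strongly in $L^2(0,t_*;L^3)$.

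I would then test \eqref{eq3.1.3} against $(\tilde\vv,\tilde\theta,\tilde q)\varphi(t)$ with $\varphi\in C^1([0,t_*])$ and $\varphi(t_*)=0$, integrate on $(0,t_*)$ and integrate by parts in time. The linear terms built from $a_\star$, $m_\theta$, $e_p$, $e_c$ all pass to the limit by weak convergence. For the trilinear form $b$, the most delicate contribution is $\int_0^{t_*}\!\int_\M \omega^\eps\,\de_p\vv^\eps\cdot\tilde\vv\,\varphi\,\dd\M\,\dd t$; I split it as $\omega^\eps\de_p\vv^\eps-\omega\de_p\vv=(\omega^\eps-\omega)\de_p\vv^\eps+\omega(\de_p\vv^\eps-\de_p\vv)$ and use the strong convergence of $\omega^\eps$ in $L^2(L^3)$ together with $\de_p\vv^\eps$ bounded in $L^2(L^6)$ (via $H^1\hookrightarrow L^6$) for the first piece, and the weak convergence of $\de_p\vv^\eps$ in $L^2(L^2)$ paired with $\omega\tilde\vv\varphi\in L^2(L^2)$ for the second. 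The $b$-terms involving $\theta$ and $q$ are treated identically.

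The main obstacle will be passing to the limit in the saturation source $D_\eps(\omega^\eps,\theta^\eps,q^\eps)=p^{-1}\omega^\eps{^-}H_\eps(q^\eps-q_s)\tF(p,\theta^\eps)$, a product of three $\eps$-dependent factors for which only one converges weakly. I would use the telescoping decomposition
\begin{align*}
D_\eps(\omega^\eps,\theta^\eps,q^\eps)-D(\omega,\theta,h_q)
&=p^{-1}\omega^\eps{^-}H_\eps(q^\eps-q_s)\bigl(\tF(p,\theta^\eps)-\tF(p,\theta)\bigr)\\
&\quad+p^{-1}\bigl(\omega^\eps{^-}-\omega^-\bigr)H_\eps(q^\eps-q_s)\tF(p,\theta)\\
&\quad+p^{-1}\omega^-\bigl(H_\eps(q^\eps-q_s)-h_q\bigr)\tF(p,\theta).
\end{align*}
The first piece is controlled by combining $|H_\eps|\le 1$, the Lipschitz bound \eqref{eq:tF}, and H\"older's inequality with the strong convergence of $\theta^\eps$; the second by the uniform bounds on $H_\eps$ and $\tF$ together with the strong convergence of $\omega^\eps$; the third tends to zero by weak-$*$ convergence of $H_\eps(q^\eps-q_s)$ tested against the fixed $L^1(\M\times(0,t_*))$ function $p^{-1}\omega^-\tF(p,\theta)\tilde\theta\varphi$ (and similarly with $\tilde q\varphi$ in the $q$-equation).

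Finally I must identify $h_q$ as an element of the multivalued Heaviside graph $H(q-q_s)$ in the weak sense \eqref{eq2.3.12}. Since $K_\eps$ is convex with $K_\eps'=H_\eps$, the subdifferential inequality
\[
\int_0^{t_*}\!(K_\eps(\tilde q-q_s),1)\,\dd t-\int_0^{t_*}\!(K_\eps(q^\eps-q_s),1)\,\dd t\ \ge\ \int_0^{t_*}\!\la H_\eps(q^\eps-q_s),\tilde q-q^\eps\ra\,\dd t
\]
holds for every $\tilde q\in L^2(0,t_*;H^1)$. The right-hand side converges to $\int_0^{t_*}\la h_q,\tilde q-q\ra\,\dd t$ by weak-$*$ convergence of $H_\eps(q^\eps-q_s)$ combined with the strong $L^2$-convergence of $q^\eps$. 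The Lipschitz property \eqref{eq:Klip} handles the replacement of $q^\eps$ by $q$ in the argument of $K_\eps$, while \eqref{eq:Kclos} gives $K_\eps\to(\cdot)^+$ uniformly, so both integrals on the left pass to their natural limits $\int_0^{t_*}([q-q_s]^+,1)\,\dd t$ and $\int_0^{t_*}([\tilde q-q_s]^+,1)\,\dd t$. A standard localization (test with $\tilde q=q+\varphi\psi$, $\varphi\ge0$ scalar and $\psi\in H^1$ arbitrary) upgrades the integrated inequality to the pointwise-in-$t$ version \eqref{eq2.3.12}. Continuity $\vv\in C([0,t_*);\V)$ and $\theta,q\in C([0,t_*);L^2)$ then follows by interpolation from the weak-$*$ bounds and the regularity of the time derivatives.
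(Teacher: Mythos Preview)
Your proposal is correct and follows essentially the same route as the paper: uniform bounds from Theorem~\ref{thm3.1}, Aubin--Lions compactness, the same telescoping decomposition of $D_\eps-D$ into three pieces handled respectively by the Lipschitz property of $\tF$, the strong convergence of $\omega^\eps$, and the weak-$*$ convergence of $H_\eps(q^\eps-q_s)$, and finally the subdifferential inequality for $K_\eps$ to recover \eqref{eq2.3.12}. One small slip: in the second piece of the $b$-term you claim $\omega\tilde\vv\varphi\in L^2(0,t_*;L^2)$, but with $\tilde\vv\in\H$ this is not immediate; the paper instead pairs the weak convergence $\de_p\vv^\eps\rightharpoonup\de_p\vv$ in $L^2(0,t_*;L^6)$ (from $H^1\hookrightarrow L^6$) against $\omega\tilde\vv\varphi\in L^2(0,t_*;L^{6/5})$, which follows directly from $\omega\in L^2(0,t_*;L^3)$ and $\tilde\vv\in L^2$.
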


\begin{remark}\label{rmk3.5.1}
By Lemma~\ref{lem3.3.1}, the quasi-strong solution $(\vv,\theta, q)$ to \eqref{eq2.1.7b}--\eqref{eq2.1.9b} satisfies the estimate
$$
\norm{ (\vv,\theta,q) }_{L^\infty(0,t_*; H)}^2 + \norm{(\vv,\theta,q)}_{L^2(0,t_*; V)}^2
\leq Ce^{C\T}\big(|(\vv_0,\theta_0,q_0)| +\norm{(\bs S_{\vv}, S_{\theta}, S_{q})}_{L^2(0,\T; H)}^2\big),
$$
which shows that we have a uniform bound for $\norm{(\vv,\theta,q)}_{H}$. This estimate will be very useful for obtaining the global strong solutions.
\end{remark}

\subsection{The $(\vv,T,q)$-system}
We now revert back to our original  $(\vv,T,q)$-system. The weak formulation 
reads 
\begin{align}
&\la\de_t\vv, \tilde\vv\ra +a_{\vv}(\vv,\tilde\vv)+ b(\vv,\vv,\tilde\vv)+e_p(T,\tilde\vv)+e_c(\vv,\tilde\vv)=(\boldsymbol S_{\vv},\tilde\vv),\label{eq:vTq1}\\
&\la\de_tT, \widetilde T\ra +a_{T}(T,\widetilde T)+ b(\vv,T,\widetilde T)
=m_T(\omega,T,\widetilde T)+\frac{L}{c_p}(  D(\omega, T,h_q),\widetilde T)+(S_{T},\widetilde T),\label{eq:vTq2}\\
&\la\de_t q, \tilde q\ra +a_{q}(q,\tilde q)+ b(\vv,q,\tilde q)
=-(D(\omega, \theta,h_q),\tilde q)+(S_{q},\tilde q)\label{eq:vTq3},
\end{align}
where
\begin{align*}
m_T(\omega,T,\widetilde T)=\int_\M \frac{Rc_p}{p}\omega T\widetilde T\dd\M,
\end{align*}
In order not to complicate the notation, we redefine
$$
e_p(T,\tilde\vv)=\int_\M \nabla \int_p^{p_1} \frac{R}{p'}T\, \dd p' \cdot\tilde\vv\,d\M
$$
and
$$
D(\omega, T,h_q)=\frac{1}{p}\omega^{-}h_qF (T).
$$
Using Theorem~\ref{thm3.2} and Remark \ref{rmk3.5.1} with the relation \eqref{eq2.1.1} between $T$ and $\theta$, we obtain the following.

\begin{corollary}\label{cor3.2}
Let there be given  $\vv_0\in \V$ and $T_0,q_0\in L^2$. 
Assume $(\boldsymbol S_{\vv},S_T,S_q)\in L^2(0,\T;H)$, $q_s\in(0,1)$ 
and $\T >0$.  There exists $t_*>0$ ($t_*\leq\T$) and  
a \emph{quasi-strong} solution to \eqref{eq:vTq1}--\eqref{eq:vTq3} such that 
\begin{alignat*}{2}
&\vv\in C([0,t_*);\V)\cap L^2(0,\T;H^2), \qquad & &T,q\in C([0,t_*);L^2)\cap L^2(0,t_*;H^1),\\
&\de_t\vv\in  L^2(0,t_*;\H), \qquad & &  \de_tT,\de_t q\in  L^{2}(0,t_*;(H^1)'),
\end{alignat*}
and the following $L^2$-estimate
\begin{equation}\label{eq4.4.66}
\norm{ (\vv,T,q) }_{L^\infty(0,t_*; H)}^2 + \norm{(\vv,T,q)}_{L^2(0,t_*; V)}^2
\leq Ce^{C\T}\big(|(\vv_0,T_0,q_0)| +\norm{(\bs S_{\vv}, S_{T}, S_{q})}_{L^2(0,\T; H)}^2\big),
\end{equation}
holds for some constant $C>0$ independent of initial data and the time $\T$ and $t_*$.
\end{corollary}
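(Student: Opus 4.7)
The plan is to derive the statement as a direct corollary of Theorem~\ref{thm3.2} via the potential-temperature change of variables from Section~\ref{sub:pottemp}. Since $p\in[p_0,p_1]$ with $0<p_0<p_1$, the map
\[
T \longmapsto \theta = T\left(\frac{p_0}{p}\right)^{R/c_p}
\]
is a bi-Lipschitz bijection on each of the spaces $L^2$, $H^1$ and $H^2$, with norms equivalent up to universal constants depending only on $p_0,p_1,R,c_p$. The same is therefore true for the companion bijections on $C([0,t_*);L^2)$, $L^2(0,t_*;H^1)$, $L^2(0,t_*;H^2)$ and the dual spaces $L^2(0,t_*;(H^1)')$.

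First I would define $\theta_0=T_0(p_0/p)^{R/c_p}\in L^2$ and $S_\theta = S_T(p_0/p)^{R/c_p}\in L^2(0,\T;L^2)$, and apply Theorem~\ref{thm3.2} to obtain a quasi-strong solution $(\vv,\theta,q)$ to \eqref{eq2.1.7b}--\eqref{eq2.1.15b} on some interval $[0,t_*)$. Setting $T(x,y,p,t) = \theta(x,y,p,t)(p/p_0)^{R/c_p}$ and using the identity \eqref{eq2.1.3}, the differential inclusion \eqref{eq2.1.8b} for $\theta$ rewrites exactly as \eqref{eq:vTq2}; the relation $\tF(p,\theta) = F(T)$ built into \eqref{eq:DDDDD} guarantees that the right-hand side becomes $(L/c_p)D(\omega,T,h_q)$ with the very same multiplier $h_q\in L^\infty(\M\times(0,t_*))$ and the same variational inequality \eqref{eq2.3.12} (the threshold only involves $q$, so it is unchanged). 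The $\vv$- and $q$-equations \eqref{eq:vTq1} and \eqref{eq:vTq3} are obtained verbatim from \eqref{eq2.1.7b} and \eqref{eq2.1.9b} after replacing $(p'/p_0)^{R/c_p}\theta$ by $T$ in the pressure term. The regularity statements for $T$ follow from those for $\theta$ by the bijection above, and the initial condition is satisfied by construction.

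For the $L^2$-estimate \eqref{eq4.4.66}, I would invoke Remark~\ref{rmk3.5.1}, which provides
\[
\norm{(\vv,\theta,q)}_{L^\infty(0,t_*;H)}^2 + \norm{(\vv,\theta,q)}_{L^2(0,t_*;V)}^2
\leq C\e^{C\T}\bigl(|(\vv_0,\theta_0,q_0)|^2 + \norm{(\bs S_{\vv},S_\theta,S_q)}_{L^2(0,\T;H)}^2\bigr).
\]
Since $|\theta_0|\leq c|T_0|$, $|S_\theta(t)|\leq c|S_T(t)|$ and analogous equivalences hold for $T$ and $\theta$ in $L^\infty(H)\cap L^2(V)$, the bound \eqref{eq4.4.66} follows at once with a constant $C$ that depends only on $p_0,p_1,R,c_p$ and the constants appearing in Remark~\ref{rmk3.5.1}, and in particular is independent of the initial data and of $\T,t_*$.

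The only step requiring any care is checking that the inclusion transforms cleanly, that is, that the same $h_q$ satisfying \eqref{eq2.3.12} serves for both the $\theta$- and the $T$-formulation; this is immediate because \eqref{eq2.3.12} depends on $q$ and $q_s$ only. Beyond this the argument is pure bookkeeping.
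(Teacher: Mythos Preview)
Your proof is correct and follows exactly the approach indicated in the paper, which simply states that the result follows from Theorem~\ref{thm3.2} and Remark~\ref{rmk3.5.1} via the relation \eqref{eq2.1.1} between $T$ and $\theta$. You have merely spelled out the bookkeeping (the bi-Lipschitz nature of the change of variables, the transfer of the variational inequality, and the equivalence of norms) that the paper leaves implicit.
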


\section{Global strong solutions}\label{sec5}
\noindent The notion of quasi-strong solutions was introduced in the previous section in order
to deal with the lesser regularity of the vertical component $\omega$ of the velocity field
with respect to $\vv$. Moreover, the use of the potential temperature $\theta$ turned out to be convenient to obtain the basic $L^2$-$H^1$ estimates, circumventing the difficulty
 of dealing with the anti-dissipative term
 $$
 -\frac{R}{c_p p}\omega T,
 $$
 present in the equation for the temperature $T$. From here on, we only consider the $(\vv,T,q)$-system
 of equations, for which existence of quasi-strong solutions has been established in Corollary~\ref{cor3.2}.
 
The first goal of this section is to prove that, for more regular initial data, the local solutions derived in the previous section are in fact \emph{strong}. The second goal aims to show the existence of global \emph{strong} solutions by using Theorem~\ref{thm6.1.1} (see also \cite{KZ07}).
 
\subsection{Strong solutions}
We begin by defining the concept of strong solutions.

\begin{definition}\label{def:strong}
Let  $(\vv_0,T_0, q_0)$ be given in $V$. 
Assume $(\boldsymbol S_{\vv},S_T,S_q)$ are given in $L^2(0,\T;H)$, $q_s\in(0,1)$ 
and $\T >0$. A vector $(\vv,T, q)$ is a \emph{strong} solution to \eqref{eq2.1.7b}--\eqref{eq2.1.15b}  if 
\begin{alignat*}{2}
&(\vv,T, q)\in C([0,\T];V)\cap L^2(0,\T;W),\\
&\de_t (\vv,T, q)\in  L^2(0,\T;H),
\end{alignat*}
and, for almost every $t\in [0,t_1]$ and every $(\tilde\vv, \widetilde T,\tilde q)\in V$,
\begin{align}
&\la\de_t\vv, \tilde\vv\ra +a_{\vv}(\vv,\tilde\vv)+ b(\vv,\vv,\tilde\vv)+e_p(T,\tilde\vv)+e_c(\vv,\tilde\vv)=(\boldsymbol S_{\vv},\tilde\vv),\\
&\la\de_tT, \widetilde T\ra +a_{T}(T,\widetilde T)+ b(\vv,T,\widetilde T)=m_T(\omega,T,\widetilde T)
+\frac{L}{c_p}( D(\omega, T,h_q),\widetilde T)+(S_{T},\widetilde T),\\
&\la\de_t q, \tilde q\ra +a_{q}(q,\tilde q)+ b(\vv,q,\tilde q)
=-(D(\omega, T,h_q),\tilde q)+(S_{q},\tilde q),
\end{align}
for some $h_q\in L^\infty(\M\times (0,\T))$ which satisfies the variational inequality
\begin{equation}\label{newvar}
([\tilde q-q_s]^+,1)- ([q-q_s]^+,1)\geq  ( h_q, \tilde q-q), \qquad \text{a.e. } t\in [0,\T], \quad \forall\, \tilde q\in H^1.
\end{equation}
\end{definition}

The only difference between the definitions of quasi-strong and 
strong solutions lies in the regularity required for the triplet $(\vv,T,q)$. 

\subsection{Local existence of strong solutions}
From Corollary~\ref{cor3.2}, given initial data  $\vv_0\in \V$, $T_0\in H^1$, and $q_0\in H^1$, we deduce that a quasi-strong solution exists, at least locally in time. The velocity field $\vv$
already has the regularity required to be a strong solution. Our aim is now to improve the regularity on $T$ and $q$ on the same small time-interval of our local quasi-strong solution. We now prove the following theorem.

\begin{theorem}\label{thm5.1}
Let there be given  $\vv_0\in \V$ and $T_0,q_0\in H^1$. 
Assume $(\boldsymbol S_{\vv},S_T,S_q)\in L^2(0,\T;H)$, $q_s\in(0,1)$,
and $\T >0$.  Then there exists $t_*>0$ ($t_*\leq\T$) and  
a \emph{strong} solution to \eqref{eq:vTq1}--\eqref{eq:vTq3} such that 
\begin{alignat*}{2}
&(\vv,T,q)\in C([0,t_*);V)\cap L^2(0,t_*;W),\\
&\de_t(\vv,T,q)\in  L^2(0,t_*;H).
\end{alignat*}
\end{theorem}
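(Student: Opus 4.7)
The plan is to upgrade the $H^1$ regularity of the scalar fields $T$ and $q$ on the same time interval $t_*$ furnished by Corollary~\ref{cor3.2}; the velocity $\vv$ already satisfies the required regularity $\vv \in C([0,t_*);\V) \cap L^2(0,t_*;H^2)$ with $\de_t\vv \in L^2(0,t_*;\H)$ by Lemma~\ref{lem3.5.3}, so no further work is needed on it. I would work in the Galerkin approximation of the regularized problem \eqref{eq3.1.3} from Section~\ref{sec:approx111}, now with the initial data $\theta_0, q_0 \in H^1$ (inherited from $T_0,q_0\in H^1$ via \eqref{eq2.1.14}), approximated by projections onto the eigenbasis of $A$. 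The aim is to derive $\eps$-uniform bounds for $(\theta^\eps, q^\eps)$ in $L^\infty(0,t_*;H^1) \cap L^2(0,t_*;H^2)$, from which the passage to the limit $\eps\to 0$ proceeds as in Section~\ref{sec:localquasi} and the variational inequality for $h_q$ is preserved by the same monotonicity argument. Translating back to $T$ via the multiplicative relation \eqref{eq2.1.1} then yields the strong solution.

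The core estimate is obtained by testing the approximate $\theta$-equation in \eqref{eq3.1.3} against $A_T\theta^\eps$, producing the identity
\[
\tfrac{1}{2}\ddt |A_T^{1/2}\theta^\eps|^2 + |A_T\theta^\eps|^2 = -b(\vv^\eps,\theta^\eps,A_T\theta^\eps) - m_\theta(\theta^\eps,A_T\theta^\eps) + \left(\tfrac{L}{c_p}\bigl(\tfrac{p_0}{p}\bigr)^{R/c_p}D_\eps(\omega^\eps,\theta^\eps,q^\eps),A_T\theta^\eps\right) + (S_\theta,A_T\theta^\eps).
\]
The right-hand side must be absorbed into $\tfrac{1}{2}|A_T\theta^\eps|^2$ plus a remainder of the form $\eta(t)\|\theta^\eps\|^2 + \zeta(t)$ with $\eta,\zeta \in L^1(0,t_*)$ uniformly in $\eps$. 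The coupling term is controlled by $|D_\eps|\leq C|\omega^\eps|$ (uniformly in $\eps$, from boundedness of $H_\eps$ and $\tF$) together with Young's inequality; the $m_\theta$-term is handled via \eqref{eq2.3.5} and Sobolev embeddings; the source term is trivial. A Gronwall argument, exploiting that $\|\vv^\eps\|_{H^2}^2 \in L^1(0,t_*)$ and $\|\vv^\eps\| \in L^\infty(0,t_*)$ uniformly in $\eps$ (from Lemmas~\ref{lem3.3.1} and~\ref{lem3.5.3}), then closes the estimate. The identical argument applied to the $q$-equation tested against $A_q q^\eps$ yields $q^\eps \in L^\infty(0,t_*;H^1)\cap L^2(0,t_*;H^2)$ uniformly. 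Finally, the $L^2(0,t_*;H)$ bound on $(\de_t T^\eps, \de_t q^\eps)$ follows at once by comparison in the equations using these new spatial bounds.

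The principal obstacle is the transport trilinear term $b(\vv^\eps,\theta^\eps,A_T\theta^\eps)$, and likewise $b(\vv^\eps,q^\eps,A_q q^\eps)$: since $A_T\theta^\eps$ lives only in $L^2$, the bounds in Lemma~\ref{lem2.1} do not apply directly, and one must expand $b$ explicitly as $\int_\M (\vv^\eps\cdot\nabla\theta^\eps + \omega^\eps\de_p\theta^\eps)\,A_T\theta^\eps\,\dd\M$, integrate by parts using the boundary conditions and the incompressibility constraint, and invoke the three-dimensional anisotropic Sobolev inequalities characteristic of the primitive-equations framework (cf.~\cite{CT06,KZ07,KZ08}), crucially exploiting the structural identity $\omega^\eps = \int_p^{p_1}\nabla\cdot\vv^\eps\,\dd p'$, which trades vertical derivatives of $\omega^\eps$ for horizontal divergence regularity of $\vv^\eps$. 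Once the $\eps$-uniform $H^1$ bounds are secured, the passage to the limit follows the blueprint of Section~\ref{sec:localquasi}, the stronger convergences only facilitating the identification of the nonlinear limits; in particular, the variational inequality \eqref{newvar} is recovered from the weak-$*$ convergence $H_\eps(q^\eps-q_s)\stackrel{*}{\rightharpoonup}h_q$ by the same monotonicity device.
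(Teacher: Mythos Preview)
Your approach is correct and close in spirit to the paper's, but with two differences worth noting. First, you work with the potential temperature $\theta$ at the $\eps$-approximation level and pass to the limit, whereas the paper works directly with $T$ in the $(\vv,T,q)$-system, performing the estimates formally on the quasi-strong solution already obtained and invoking Galerkin only as a closing remark. Either route is fine; your choice via $\theta$ trades the trilinear term $m_T(\omega,T,A_TT)$ for the milder bilinear $m_\theta(\theta,A_T\theta)$, which is arguably cleaner.

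Second, and more substantively, you misidentify the principal obstacle. You claim that for $b(\vv^\eps,\theta^\eps,A_T\theta^\eps)$ the bounds of Lemma~\ref{lem2.1} do not apply because $A_T\theta^\eps$ is only in $L^2$, and that one must therefore integrate by parts and invoke anisotropic Sobolev inequalities. In fact estimate~\eqref{eq2.3.9} requires only the $L^2$ norm of its third argument, and the paper applies it directly:
\[
|b(\vv,T,A_TT)| \leq c\|\vv\|^{1/2}\|\vv\|_{H^2}^{1/2}\|T\|^{1/2}\|T\|_{H^2}^{1/2}|A_TT| \leq c\|\vv\|^{1/2}\|\vv\|_{H^2}^{1/2}\|T\|^{1/2}|A_TT|^{3/2},
\]
using the equivalence $\|T\|_{H^2}\sim|A_TT|$, and then absorbs via Young's inequality. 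The anisotropic machinery you propose would also work but is unnecessary here; the paper reserves it for the uniqueness argument in Section~\ref{sec:uniq} and for Appendix~\ref{sec-kz-idea}, where the available regularity is genuinely lower.
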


\begin{proof}
Let $t_*>0$ be the time of existence of a local quasi-strong solution. We start with improving the regularity on $T$, showing that
$$
T\in L^\infty(0,t_*;H^1)\cap L^2(0,t_*;H^2).
$$
Testing formally equation \eqref{eq:vTq2} with $A_T T$, assuming that $A_T T\in L^2(0,\T; (H^1)')$, we obtain the differential equation
\begin{align*}
\frac12\frac{\dd}{\dd t}    |A_T^{1/2}T|^2 +|A_TT|^2=&- b(\vv,T,A_TT)+m_T(\omega,T,A_TT)\\
&+\frac{L}{c_p}(  D(\omega, T,h_q),A_TT)+(S_{T},A_TT).
\end{align*}
We now estimate the terms on the right. Thanks to \eqref{eq2.3.9} written only for $b$, we have
\begin{align*}
|b(\vv,T, A_TT) |&\leq c \|\vv\|^{1/2}\|\vv\|_{H^2}^{1/2} \| T \|^{1/2} \|T\|_{H^2}^{1/2} |A_TT|\\
&\leq c \|\vv\|^{1/2}\|\vv\|_{H^2}^{1/2} \| T \|^{1/2} |A_TT|^{3/2}\\
&\leq \frac18|A_TT|^2+c \|\vv\|^{2}\|\vv\|_{H^2}^{2} \| T \|^{2} 
\end{align*}
Moreover,
\begin{align*}
m_T(\omega,T,A_T T)&\leq c\|\omega\|_{L^4}\|T\|_{L^4}|A_TT|\\
&\leq c \|\omega\|\|T\||A_TT|\\
&\leq  \frac18|A_TT|^2 +c\|\vv\|_{H^2}^2\|T\|^2.
\end{align*}
The third term is estimated as
\begin{align*}
\frac{L}{c_p}(  D(\omega, T,h_q),A_TT)&\leq c|\omega^-||A_TT|\\
&\leq  \frac18|A_TT|^2 +c\|\vv\|^2,
\end{align*}
and, lastly,
$$
|(S_{T},A_TT)|\leq \frac18|A_TT|^2 +c|S_T|^2.
$$
Therefore, by equivalence of the norms $|A_T^{1/2}T|$ and $\|T\|$, we obtain
$$
\frac{\dd}{\dd t} |A_T^{1/2}T|^2 +|A_TT|^2\leq c \big(\|\vv\|^{2}\|\vv\|_{H^2}^{2}+\|\vv\|_{H^2}^2\big)|A_T^{1/2}T|^2+c\big(\|\vv\|^2+|S_T|^2\big).
$$
Note that since 
$$
\vv\in C([0,t_*);\V)\cap L^2(0,t_*;H^2)
$$
and
$$
S_T\in L^2(0,t_*;L^2),
$$
we see that 
$$
\|\vv\|^{2}\|\vv\|_{H^2}^{2}+\|\vv\|_{H^2}^2\in L^1(0,t_*), \qquad \|\vv\|^2+|S_T|^2\in L^1(0,t_*).
$$
Therefore, the claim that
$$
T\in L^\infty(0,t_*;H^1)\cap L^2(0,t_*;H^2)
$$
may be deduced from an application of the Gronwall lemma and the implementation of a Galerkin method. Once this is settled, the fact that
$$
\de_t T\in L^2(0,t_*;L^2)
$$
is deduced directly from equation \eqref{eq:vTq2}. This implies, in particular, that 
$$
T\in C([0,t_*);H^1)\cap L^2(0,t_*;H^2),
$$
concluding the proof. The regularity of $q$ can be established in the same way, as the $q$-equation
involves the same terms except for the trilinear form $m_T$. Theorem~\ref{thm5.1} is then proven.\end{proof}

\subsection{Global existence of strong solutions}
The existence of global strong solutions for the full primitive equations with saturation \eqref{eq1.1.1b}--\eqref{eq1.1.6b} with initial and boundary conditions \eqref{eq1.7}--\eqref{eq1.10} (or the reformulated version \eqref{eq:vTq1}--\eqref{eq:vTq3}) is a direct consequence of Theorem~\ref{thm6.1.1} from the Appendix and Theorem~\ref{thm5.1}.

\begin{theorem}\label{thm6.2.1}
Let there be given  $\vv_0\in \V$ and $T_0,q_0\in H^1$. 
Assume $(\boldsymbol S_{\vv},S_T,S_q)\in L^2(0,\T;H)$, $q_s\in(0,1)$ 
and $\T >0$.  Then there exists a global \emph{strong} solution to \eqref{eq:vTq1}--\eqref{eq:vTq3} such that 
\begin{alignat*}{2}
&(\vv,T,q)\in C([0,\T]; \V)\cap L^2(0,\T; W),\\
&\de_t(\vv,T,q)\in  L^2(0,\T;H).
\end{alignat*}
\end{theorem}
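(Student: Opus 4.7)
The plan is to combine the local strong existence result of Theorem~\ref{thm5.1} with suitable global a priori estimates to rule out blow-up in finite time. More precisely, I would argue by contradiction via the standard continuation method: let $T_{\max}\in (0,\T]$ be the maximal time of existence of a strong solution $(\vv,T,q)$ with the regularity stated in Theorem~\ref{thm5.1}; if $T_{\max}<\T$, then necessarily $\|(\vv,T,q)(t)\|_V\to\infty$ as $t\to T_{\max}^-$. The task therefore reduces to establishing an a priori $L^\infty(0,\T;V)$ bound on any strong solution on its interval of existence, depending only on the data $\vv_0,T_0,q_0,\bs S_{\vv},S_T,S_q$ and $\T$.

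The first ingredient is the uniform $L^\infty(0,t_*;H)\cap L^2(0,t_*;V)$ bound already recorded in Remark~\ref{rmk3.5.1} and Corollary~\ref{cor3.2} (inequality \eqref{eq4.4.66}), which is independent of $t_*$ and holds for all $\T$. The second, decisive ingredient is the global $H^1$-estimate of the velocity for the 3D primitive equations announced as Theorem~\ref{thm6.1.1} in the Appendix (a variant of the arguments of \cites{KZ07,KZ08}): viewed as a system for $\vv$ alone, equation \eqref{eq:vTq1} is the 3D primitive system for the velocity driven by the forcing $\bs S_{\vv}-E_p T$, and the $L^2(0,\T;L^2)$ norm of the latter is already controlled by \eqref{eq4.4.66}. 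Applying Theorem~\ref{thm6.1.1} yields
\[
\vv\in L^\infty(0,\T;\V)\cap L^2(0,\T;(H^2)^2),\qquad \de_t\vv\in L^2(0,\T;\H),
\]
with a bound depending only on the data.

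Once $\vv$ is globally controlled in $L^\infty(\V)\cap L^2(H^2)$, I would repeat the $H^1$-energy estimates on $T$ and $q$ carried out in the proof of Theorem~\ref{thm5.1}, testing \eqref{eq:vTq2}--\eqref{eq:vTq3} against $A_T T$ and $A_q q$ respectively. All the right-hand side terms are handled exactly as in the local proof: the trilinear term $b(\vv,T,A_T T)$ is dominated by $\tfrac18|A_T T|^2+c\|\vv\|^2\|\vv\|_{H^2}^2\|T\|^2$, the anti-dissipative contribution $m_T(\omega,T,A_T T)$ by $\tfrac18|A_T T|^2+c\|\vv\|_{H^2}^2\|T\|^2$, and the nonlinear source $\frac{L}{c_p}(D(\omega,T,h_q),A_T T)$ by $\tfrac18|A_T T|^2+c\|\vv\|^2$ using the uniform bound $|h_q|\leq 1$ and $|F|\leq C_F$; the analogous bounds hold for $q$. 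Since $\|\vv\|^2\|\vv\|_{H^2}^2+\|\vv\|_{H^2}^2$ now belongs to $L^1(0,\T)$, Gronwall's lemma yields the global estimate $(T,q)\in L^\infty(0,\T;H^1)\cap L^2(0,\T;H^2)$, and $\de_t(T,q)\in L^2(0,\T;L^2)$ follows directly from the equations. This precludes blow-up at $T_{\max}$, so $T_{\max}=\T$, and the continuous dependence on time in $V$ follows by a standard interpolation argument. The variational inequality \eqref{newvar} for $h_q$ is preserved at each extension step, as it is a local-in-time condition. The main obstacle, already circumvented by Theorem~\ref{thm6.1.1}, is the global $H^1$ control of the 3D primitive velocity; once this is granted, the coupling to $T$ and $q$ is essentially linear at the level of the highest-order terms and causes no further difficulty.
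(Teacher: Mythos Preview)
Your proposal is correct and follows essentially the same route as the paper: use the uniform $L^2$--$H^1$ estimate \eqref{eq4.4.66} to control the effective forcing in the velocity equation, invoke Theorem~\ref{thm6.1.1} to obtain the global $L^\infty(\V)\cap L^2(H^2)$ bound on $\vv$, and then repeat verbatim the $A_T T$ and $A_q q$ energy estimates from the proof of Theorem~\ref{thm5.1} to close the bounds on $T$ and $q$. The only cosmetic difference is that the paper also places the Coriolis term $f\kk\times\vv$ into the modified forcing $\overline{\bs S}_{\vv}$ (since Theorem~\ref{thm6.1.1} is stated without Coriolis), but this is of course harmless as $|f\kk\times\vv|_{L^2}\leq C|\vv|$ is already controlled by \eqref{eq4.4.66}.
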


\begin{proof}
	From Theorem~\ref{thm5.1}, we already know the existence of a local strong solutions in some maximum time interval $[0, t_*)$ ($t_*\leq \T$). Therefore, in order to find the global strong solution, it is enough to show that the uniform bound $\norm{ (\bs v, T, q)(\cdot, t) } \leq \widetilde M$ independent of $t\in[0, t_*)$, which implies that no blow-up  can occur at the time $t=t_*$.
	
	Let us first rewrite the original velocity equation \eqref{eq2.1.7b} as
	\begin{equation}\label{eq6.2.3}
		\frac{\de \vv}{\de t} + \vv\cdot \nabla\vv + \omega \frac{\de \vv}{\de p}  + \nabla \Phi_s + \A_{\vv} \vv = \boldsymbol S_{\vv}-f\bs k\times\vv-\nabla\int_p^{p_1}\frac{R}{p'}T\dd p' =:\overline{\bs S}_{\vv}.\\
	\end{equation}
	Then by the $L^2$-uniform estimate \eqref{eq4.4.66} in Corollary~\ref{cor3.2}, we deduce 
	$$
	\norm{ \overline{\bs S}_{\vv} }_{L^2(0, t_*; L^2)} \leq \widetilde M,
	$$
	for some $\widetilde M>0$ independent of $t_*$. Therefore, applying Theorem~\ref{thm6.1.1} to \eqref{eq6.2.3} with $\bs S_{\vv}=\overline{\bs S}_{\vv}$, we obtain 
	\begin{equation}\label{eq6.2.5}
		\norm{\vv(t)}+ \norm{\vv}_{L^2(0,t_*; H^2)} \leq \widetilde M,\quad\quad\text{independent of }t_*.
	\end{equation}
	With the uniform estimate \eqref{eq6.2.5} for $\vv$ at hand, proceeding exactly as in the proof of Theorem~\ref{thm5.1} to seek the uniform $H^1$-estimate for $T$ and $q$, we are able to obtain the uniform bound for $\norm{ (T, q)(\cdot, t) }_V$. In conclusion, we have
	$$
	\sup_{t\in[0,t_*)}\norm{ (\bs v, T, q)( t) }+\norm{(\vv, T, q)}_{L^2(0,t_*; H^2)}  \leq \widetilde M, 
	$$
	for some $\widetilde M>0$ independent of $t_*$. We thus completed the proof of the existence of global strong solutions to \eqref{eq:vTq1}--\eqref{eq:vTq3}, that is, the proof of Theorem~\ref{thm6.2.1}.
\end{proof}

\section{Uniqueness of strong and quasi-strong solutions}\label{sec:uniq}
\noindent Here we prove uniqueness and continuous dependence results for the quasi-strong solutions 
to \eqref{eq:vTq1}--\eqref{eq:vTq3}. As a straightforward consequence, strong solutions turn out to be unique as well. 
The proof combines many ideas. The velocity equations are treated as in
\cites{CT06,GMR,TZ04}, while the temperature equation has to be substituted by the moist static
energy equation in order to exploit a certain cancellation property, as in \cites{CFTT}. Finally, tools from 
monotone operator theory and variational inequalities \cites{B,ET76,KS,R}  turn out to be useful to handle 
the specific humidity equation, again  following along the lines in \cite{CFTT}. 
The following is the main result 
of this section.

\begin{theorem}\label{thm:unieq}
Assume that $(\vv_1,T_1,q_1)$ and $(\vv_2,T_2,q_2)$ 
are two (strong or) quasi-strong solutions to \eqref{eq:vTq1}--\eqref{eq:vTq3} on $[0,\T]$, with the nonlinear function $F$ replaced by
its positive part $F^+$, and with initial data $\vv_i^0\in \V$, an $T_i^0,q_i^0\in L^2$, 
for $i=1,2$. Then, there exists positive
constants $c$ and $c_0=c_0(\|\vv_i^0\|,|T_i^0|,|q_i^0|)$ such that
$$
\sup_{t\in[0,\T]} \|(\vv_1,T_1,q_1)-(\vv_2,T_2,q_2)\|_{\V\times L^2\times L^2}^2(t)
\leq c\,\e^{c_0\T}\|(\vv_1^0,T_1^0,q_1^0)-(\vv_2^0,T_2^0,q_2^0)\|^2_{\V\times L^2\times L^2}.
$$
In particular, there exists a unique  strong solution to \eqref{eq:vTq1}--\eqref{eq:vTq3}.
\end{theorem}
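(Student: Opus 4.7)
Set $V=\vv_1-\vv_2$, $\Theta=T_1-T_2$, $Q=q_1-q_2$, $W=\omega_1-\omega_2$, let $h_i\in H(q_i-q_s)$ be the selections associated with the two variational inequalities \eqref{newvar}, and introduce the moist static energy difference $E=c_p\Theta+LQ$. The plan is to derive three coupled differential inequalities---an $H^1$-estimate on $V$, an $L^2$-estimate on $E$ (where the discontinuous Heaviside term cancels), and an $L^2$-estimate on $Q$ (where monotonicity of the subdifferential $\partial[\cdot-q_s]^+$ plays the decisive role)---then recover $\Theta$ in $L^2$ via $|\Theta|^2\leq c(|E|^2+|Q|^2)$ and close by Gronwall.

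For the velocity, I would subtract the two $\vv$-equations and test with $A_{\vv}V$, admissible since $\vv_i\in L^2(0,\T;H^2)$. Treating the trilinear differences $b(V,\vv_1,A_{\vv}V)+b(\vv_2,V,A_{\vv}V)+b(V,V,A_{\vv}V)$ by the anisotropic bound \eqref{eq2.3.9} in Lemma~\ref{lem2.1}---as in Lemma~\ref{lem3.5.2} and \cite{CT06}---and handling the coupling $e_p(\Theta,A_{\vv}V)$ by \eqref{eq2.3.6}, one obtains
\begin{equation*}
\ddt\|V\|^2+\kappa_{\vv}|A_{\vv}V|^2\leq \eta_1(t)\|V\|^2+c|\Theta|^2,
\end{equation*}
with $\eta_1\in L^1(0,\T)$ controlled by $\|\vv_i\|$ and $\|\vv_i\|_{H^2}$.

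For the scalar variables, I would form $c_p\cdot\eqref{eq:vTq2}+L\cdot\eqref{eq:vTq3}$ for each solution and subtract; the opposite-sign contributions $\pm\frac{L}{p}\omega^-h_qF^+(T)$ cancel identically, so that $E$ satisfies a clean linear transport--diffusion equation forced only by $V$, $W$, and lower-order couplings involving $(\vv_2,T_2,q_2)$. Testing with $E$, using $b(\vv_1,E,E)=0$ and absorbing the mixed dissipation $c_pL\bigl(a_T(\Theta,Q)+a_q(Q,\Theta)\bigr)$ into the coercive parts $c_p^2 a_T(\Theta,\Theta)+L^2 a_q(Q,Q)$ via Young, one arrives at
\begin{equation*}
\ddt|E|^2+\tilde\kappa(\|\Theta\|^2+\|Q\|^2)\leq c|E|^2+\eta_2(t)(\|V\|^2+|Q|^2),
\end{equation*}
for some $\eta_2\in L^1(0,\T)$. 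Separately I would subtract the $q$-equations and test with $Q$; splitting the source difference into three pieces, the diagonal one
\begin{equation*}
I_h=\int_{\M}\frac{\omega_2^-}{p}F^+(T_1)\,(h_1-h_2)(q_1-q_2)\,\dd\M\ \geq\ 0
\end{equation*}
has the correct sign---this is where both the variational inequality \eqref{newvar} and the replacement $F\leadsto F^+\geq 0$ are indispensable---and may be dropped. The remaining two pieces are handled by $|\omega_1^- -\omega_2^-|\leq|W|\leq c\|V\|$ and the Lipschitz bound on $F^+$, and then absorbed into $\kappa_q\|Q\|^2$ using Sobolev embeddings, yielding
\begin{equation*}
\ddt|Q|^2+\kappa_q\|Q\|^2\leq c\bigl(|Q|^2+|\Theta|^2+\|V\|^2\bigr).
\end{equation*}

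Summing the three inequalities with suitable small weights, eliminating $|\Theta|^2$ by $|\Theta|^2\leq c(|E|^2+|Q|^2)$ so the gradient terms on the right are absorbed into the dissipation on the left, and applying Gronwall with the $L^1(0,\T)$ prefactors produces the continuous-dependence estimate; uniqueness of strong solutions is then the special case of vanishing initial data. The main obstacle is the term $I_h$: because $h_1-h_2$ does not vanish even where $q_1=q_2$ (the Heaviside being multivalued at zero), no Lipschitz control in $\Theta,Q,V$ is available, and the monotonicity of $\partial[\cdot-q_s]^+$ yields a good sign only when this term is paired with $q_1-q_2$, not with $T_1-T_2$. This is precisely the reason the moist static energy $E$ must be used in place of $\Theta$: it removes the Heaviside from the $E$-equation entirely, so that temperature information is recovered cleanly through $E$ and $Q$ rather than through an energy estimate on $\Theta$ alone. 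A secondary, but comparatively routine, technical point is the $H^1$-velocity step, which crucially exploits the quasi-strong regularity $\vv_i\in L^2(0,\T;H^2)$.
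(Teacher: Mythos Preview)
Your strategy is exactly the paper's: pass to the moist static energy $e=c_pT+Lq$ so the Heaviside contribution cancels, test $(\hv,\he,\hq)$ with $(A_{\vv}\hv,\he,\hq)$, use the monotonicity of $H(\cdot-q_s)$ together with $F^+\ge 0$ to drop the diagonal piece in the $q$-estimate, and close by a weighted Gronwall. Two points in your write-up are inaccurate, however, and would need correction before the argument goes through.

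First, several right-hand sides are understated. In the velocity step, $e_p(\Theta,A_{\vv}V)$ can only be bounded by $C\|\Theta\|\,|A_{\vv}V|$ (cf.\ \eqref{eq2.3.6}), so the resulting term is $c\|\Theta\|^2$ (equivalently $\|E\|^2+\|Q\|^2$), not $c|\Theta|^2$; this must be absorbed by the scalar dissipation. Likewise, in both the $E$- and $Q$-estimates the convective terms $b(\hv,e_2,\he)$, $b(\hv,q_2,\hq)$ and the $m_T$-type contribution require the anisotropic bound $\|\varphi\|_{L^4_{x,y}L^2_p}\le C|\varphi|^{1/2}\|\varphi\|^{1/2}$, which unavoidably produces a small multiple of $|A_{\vv}V|^2$ on the right (see the paper's treatment); your inequalities should carry $\varepsilon|A_{\vv}V|^2$, to be absorbed by the velocity dissipation via the weights. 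The Lipschitz piece in the $q$-estimate similarly yields a small $\|\Theta\|$ (hence $\|E\|,\|Q\|$) contribution, not just $|\Theta|^2$.

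Second, and more substantively, the claim that testing the $E$-equation with $E$ gives coercivity $\tilde\kappa(\|\Theta\|^2+\|Q\|^2)$ after ``absorbing the mixed dissipation $c_pL(a_T(\Theta,Q)+a_q(Q,\Theta))$ via Young'' is not true in general: the cross terms are controlled only by the continuity constants of $a_T,a_q$, which need not be small relative to $\kappa_T,\kappa_q$. The paper avoids this by keeping the natural dissipation $a_T(\he,\he)\ge\kappa_T\|\he\|^2$ and treating $L(a_T-a_q)(\hq,\he)$ as a right-hand-side term bounded by $\tfrac{\kappa_T}{10}\|\he\|^2+\tfrac{\kappa_q}{8\delta}\|\hq\|^2$; the $\|\hq\|^2$ piece is then absorbed by the $q$-dissipation through the weighted functional $\Psi=\delta^2|A_{\vv}^{1/2}\hv|^2+\delta|\he|^2+|\hq|^2$ with $\delta$ small. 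With these corrections your scheme coincides with the paper's and closes.
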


\begin{remark}
The replacement of $F$ with its positive part $F^+$ plays here an essential role. It is linked with
positivity and $L^\infty$ bounds on the temperature, which will be discussed in the subsequent Section
\ref{sec:maxprinc}. Note that Theorem \ref{thm:unieq} applies as well when we keep $F$ instead of replacing it 
by $F^+$, as long as the temperature remains positive and below the bound 
$$
T\leq \frac{L R}{c_pR_v}\simeq 1548K,
$$
which is far higher than any temperature on earth.
\end{remark}

\subsection{The moist static energy}
In order to prove the uniqueness of solutions in Theorem~\ref{thm:unieq}, we introduce as in \cite{CFTT} the so-called \emph{moist static energy} function
$$
e=c_p T+ Lq,
$$
which is easily seen to satisfy the equation
\begin{equation}\label{eq:ephy}
\frac{\de e}{\de t}+\vv \cdot \nabla e +\omega \frac{\de e}{\de p}+\A_T e=L(\A_T -\A_q)q+\frac{R}{c_pp}\omega (e-Lq),
\end{equation}
along with the boundary conditions
\begin{equation}\label{BC1}
\begin{aligned}
&\text{on }\Gamma_i: \frac{\de e}{\de p}=-\frac{\alpha_T}{\nu_T}e+L\left( \frac{\alpha_T}{\nu_T}-\frac{\alpha_q}{\nu_q}\right)q,\\
&\text{on }\Gamma_u:  \frac{\de e}{\de p}=0,\\
&\text{on }\Gamma_\ell:  \frac{\de e}{\de \boldsymbol n}=0.
\end{aligned}
\end{equation}
Also, the initial condition now reads
\begin{equation}\label{eq:init3}
e(x,y,p,0)=e_0(x,y,p),
\end{equation}
where  $e_0=c_p T_0+Lq_0$. It is clear that proving uniqueness of a quasi-strong solution $(\vv,T,q)$ is equivalent to showing uniqueness of a quasi-strong solution $(\vv,e,q)$. Although consideration of the $(\vv,e,q)$-system introduces some coupling in the linear part
of \eqref{eq:ephy} and in the boundary conditions on $\Gamma_i$, these terms can be handled and  we gain the advantage of eliminating the
non-Lipschitz term induced by the multivalued function $H(q-q_s)$. 

\subsection{Proof of Theorem~\ref{thm:unieq}}
In what follows, the letter $c$ will refer to a \emph{generic} positive constant, which may be calculated in terms
of the physical parameters of the problem. Also, $\delta\in (0,1)$ will be a sufficiently small \emph{fixed} constant.
Let $(\vv_1,e_1,q_1)$ and $(\vv_2,e_2,q_2)$ be two global quasi-strong solutions with initial data
$(\vv_1^0,e_1^0,q_1^0)$ and $(\vv_2^0,e_2^0,q_2^0)$, respectively.
The differences
$$
\hv=\vv_1-\vv_2,\qquad \he=e_1-e_2,\qquad \hq=q_1-q_2,
$$
satisfy 
$$
\begin{aligned}
&\de_t \hv + (\vv_1\cdot\nabla)\hv +(\hv\cdot\nabla) \vv_2+\omega_1 \de_p\hv +\ho \de_p\vv_2+ f\kk\times \hv+\nabla \widehat\Phi_s\\
&\qquad+\nabla\int_p^{p_1} \frac{R}{c_pp^\prime} \big[\he-L\hq\big] \dd p^\prime +\A_{\vv}\hv=0,\\
&\de_t \he+ \vv_1\cdot\nabla\he +\hv\cdot\nabla e_2+\omega_1 \de_p\he +\ho \de_pe_2+\A_T\he= L(\A_T -\A_q)\hq\\
&\qquad+\frac{R}{c_pp}\omega_1 (\he-L\hq) +\frac{R}{c_pp}\ho(e_2-Lq_2) ,\\
&\de_t \hq  + \vv_1\cdot\nabla\hq +\hv\cdot\nabla q_2+\omega_1 \de_p\hq +\ho \de_pq_2+\A_q\hq 
+\frac1p\omega_1^-F^+(T_2) (h_{q_1}-h_{q_2}) \\
&\qquad=-\frac1p\omega_1^-h_{q_1} (F^+(T_1)-F^+(T_2))-\frac1ph_{q_2}F^+(T_2)(\omega_1^--\omega_2^-) .
\end{aligned}
$$
We now proceed to test each equation in $L^2$ by $A_{\vv}\hv$, $\he$ and $\hq$, respectively, taking into account the 
weak formulation of the equations given by \eqref{eq:vTq1}--\eqref{eq:vTq3}, appropriately reformulated for $e$ instead of
$T$. For $\hv$, 
we obtain the energy equation
\begin{equation}\label{vest}
\begin{aligned}
\frac12\frac{\dd }{\dd t}|A_{\vv}^{1/2}\hv|^2+|A_{\vv}\hv|^2=&-\left(\nabla\int_p^{p_1} \frac{R}{c_pp^\prime} \big[\he-L\hq\big] \dd p^\prime,A_{\vv}\hv\right)
-b(\hv, \vv_2,A_{\vv}\hv)\\
&-b(\vv_1, \hv,A_{\vv}\hv)-(f\kk\times \hv,A_{\vv}\hv).
\end{aligned}
\end{equation}
We estimate the right-hand side term by term. For the last term, we use the Poincar\'e inequality for the estimate
\begin{equation}\label{vest0}
|(f\kk\times \hv,A_{\vv}\hv)|\leq c\|\hv\|^2+ \frac{1}{24} |A_{\vv}\hv|^2.
\end{equation}
For the first one, as in \eqref{eq2.3.6} we obtain
\begin{equation}\label{vest1}
\left|\left(\nabla\int_p^{p_1} \frac{R}{c_pp^\prime} \big[\he-L\hq\big] \dd p^\prime,A_{\vv}\hv\right)\right|
\leq c(\|\he\|+\|\hq\|)|A_{\vv}\hv|\leq
\frac{1}{24} |A_{\vv}\hv|^2+\frac{\kappa_T}{4\delta}\|\he\|^2+\frac{\kappa_q}{4\delta^2}\|\hq\|^2.
\end{equation}
Now, the trilinear terms are more difficult and require some care. Note that
$$
b(\hv, \vv_2,A_{\vv}\hv)=((\hv\cdot\nabla) \vv_2, A_{\vv}\hv)+(\ho \de_p \vv_2, A_{\vv}\hv),
$$
so that the first part can be estimated in a fairly classical way as
$$
|((\hv\cdot\nabla) \vv_2, A_{\vv}\hv)|\leq \|\hv\|_{L^6}\|\nabla \vv_2\|_{L^3}||A_{\vv}\hv|\leq c\|\hv\| \|\vv_2\|_{H^2}|A_{\vv}\hv|
\leq \frac{1}{24}|A_{\vv}\hv|^2+ c\|\vv_2\|_{H^2}^2\|\hv\|^2.
$$
For the second part, we use an anisotropic estimate. We obtain
$$
|(\ho \de_p \vv_2, A_{\vv}\hv)|\leq c\int_{\M'} \|\ho\|_{L^\infty_p} \|\vv_2\|_{L_p^2}\|\|A_{\vv}\hv\|_{L^2_p}\dd \M',
$$
where the subscript $p$ in the above norms indicates that we have only integrated in the $p$-direction. From the definition
of $\ho$,  we have that
$$
\|\ho\|_{L^\infty_p}\leq c\|\nabla\cdot\hv\|_{L^2_p},
$$
so that a further use of the H\"older inequality in the $x,y$ direction entails
\begin{align*}
|(\ho \de_p \vv_2, A_{\vv}\hv)|&\leq c\|\ho\|_{L^4_{x,y}L^\infty_p}\|\de_p\vv_2\|_{L^4_{x,y}L_p^2}\|\|A_{\vv}\hv\|_{L^2_{x,y}L^2_p}\\
&\leq c \|\nabla \cdot\hv\|_{L^4_{x,y}L^2_p} \|\de_p\vv_2\|_{L^4_{x,y}L_p^2}\|\|A_{\vv}\hv\|_{L^2_{x,y}L^2_p}\\
&\leq c\|\hv\|^{1/2}\|\vv_2\|^{1/2}\|\vv_2\|_{H^2}^{1/2}|A_{\vv}\hv|^{3/2}\\
&\leq \frac{1}{24}  |A_{\vv}\hv|^2 + c \|\vv_2\|^{2}\|\vv_2\|_{H^2}^2 \|\hv\|^2,
\end{align*}
where we used that for $\varphi\in H^1$ there holds
$$
\|\varphi\|_{L^4_{x,y}L_p^2}\leq c \|\varphi\|_{L^2}^{1/2}\|\varphi\|_{H^1}^{1/2}.
$$
In conclusion, we find that
\begin{equation}\label{vest2}
|b(\hv, \vv_2,A_{\vv}\hv)|\leq \frac{1}{12}|A_{\vv}\hv|^2+ c\big(1+ \|\vv_2\|^{2}\big) \|\vv_2\|_{H^2}^2\|\hv\|^2.
\end{equation}
We argue similarly for the third term. Since 
$$
b(\vv_1, \hv,A_{\vv}\hv)=((\vv_1\cdot\nabla) \hv, A_{\vv}\hv)+(\omega_1 \de_p \hv, A_{\vv}\hv),
$$
we deduce that
$$
|((\vv_1\cdot\nabla) \hv, A_{\vv}\hv)|\leq \|\vv_1\|_{L^6} \|\nabla\hv\|_{L^3}|A_{\vv}\hv|\leq c \|\vv_1\| \|\hv\|^{1/2}|A_{\vv}\hv|^{3/2}
\leq \frac{1}{24} |A_{\vv}\hv|^2+c   \|\vv_1\|^4 \|\hv\|^2,
$$
and
\begin{align*}
|(\omega_1 \de_p \hv, A_{\vv}\hv)|&\leq  c \|\omega_1\|_{L^4_{x,y}L^\infty_p} \|\de_p\hv\|_{L^4_{x,y}L^2_p}|A_{\vv}\hv| \\
&\leq  c \|\nabla\cdot \vv_1\|_{L^4_{x,y}L^2_p} \|\de_p\hv\|_{L^4_{x,y}L^2_p}|A_{\vv}\hv| \\
&\leq c \|\vv_1\|^{1/2}\|\vv_1\|_{H^2}^{1/2}\|\hv\|^{1/2}|A_{\vv}\hv|^{3/2}\\
&\leq \frac{1}{24}|A_{\vv}\hv|^2 + c \|\vv_1\|^2\|\vv_1\|_{H^2}^2\|\hv\|^2 ,
\end{align*}
implying
\begin{equation}\label{vest3}
|b(\vv_1, \hv,A_{\vv}\hv)|\leq \frac{1}{12}|A_{\vv}\hv|^2 + c \big(\|\vv_1\|^2+\|\vv_1\|_{H^2}^2\big)\|\vv_1\|^2 \|\hv\|^2.
\end{equation}
Hence, in light of \eqref{vest0}--\eqref{vest3}, we derive from \eqref{vest} the differential inequality
\begin{equation}\label{vestineq}
\begin{split}
	\frac{\dd }{\dd t}\|\hv\|^2+|A_{\vv}\hv|^2
	\leq &\frac{\kappa_T}{2\delta}\|\he\|^2+\frac{\kappa_q}{2\delta^2}\|\hq\|^2\\
	&+ c\big[\|\vv_1\|^4+\|\vv_1\|^2\|\vv_1\|_{H^2}^2+ \|\vv_2\|^2_{H^2}+ 
\|\vv_2\|^2\|\vv_2\|^2_{H^2}\big]\|\hv\|^2.
\end{split}
\end{equation}
We now turn our attention to the moist static energy equation. Applying the same technique, we have 
\begin{equation}\label{eest}
\begin{aligned}
\frac12\frac{\dd}{\dd t}|\he|^2+a_T(\he,\he)=&-b(\hv,e_2,\he)+
\frac{R}{c_p}(\frac1p\omega_1(\he-L\hq),\he)\\
&+\frac{R}{c_p}(\frac1p\ho(e_2-Lq_2),\he)
+L(a_T(\hq,\he)-a_q(\hq,\he)).
\end{aligned}
\end{equation}
Thanks to the orthogonality property of the trilinear form, we have 
$$
-b(\hv,e_2,\he)=b(\hv,\he,e_2)=(\hv\cdot \nabla \he,e_2)+(\ho\de_p\he,e_2).
$$
Therefore, as above,
\begin{equation}\label{eest1}
|(\hv\cdot \nabla \he,e_2)|\leq c\|\hv\|_{L^6}\|\he\|\|e_2\|_{L^3}\leq c\|\hv\|\|\he\|\|e_2\|
\leq\frac{\kappa_{T}}{10}\|\he\|^2+c\|e_2\|^2\|\hv\|^2,
\end{equation}
and by anisotropic estimates,
\begin{equation}\label{eest2}
\begin{aligned}
|(\ho \de_p\he,e_2)|&\leq c\|\ho\|_{L^4_{x,y}L^\infty_p}\|\de_p\he\|_{L^2_{x,y}L^2_p}\|e_2\|_{L^4_{x,y}L^2_p}\\
&\leq c\|\nabla\cdot\hv\|_{L^4_{x,y}L^2_p}\|\de_p\he\|_{L^2_{x,y}L^2_p}\|e_2\|_{L^4_{x,y}L^2_p}\\
&\leq c\|\nabla\cdot\hv\|_{L^4_{x,y}L^2_p}|\de_p\he|\|e_2\|_{L^4_{x,y}L^2_p}\\
&\leq c \|\hv\|^{1/2} |A_{\vv}\hv|^{1/2} |e_2|^{1/2}\|e_2\|^{1/2}\|\he\|\\
&\leq \frac{\kappa_T}{10}\|\he\|^2+ \frac{\delta}{8}|A_{\vv}\hv|^2+c|e_2|^2\|e_2\|^2\|\hv\|^2
\end{aligned}
\end{equation}
Thanks to the continuity of the bilinear forms $a_T$ and $a_q$, we are able to estimate the last term as
\begin{equation}\label{eest3}
|L(a_T(\hq,\he)-a_q(\hq,\he))|\leq c\|\he\|\|\hq\|\leq \frac{\kappa_{T}}{10} \|\he\|^2+\frac{\kappa_{q}}{8\delta} \|\hq\|^2.
\end{equation}
Regarding the intermediate terms, we have
\begin{equation}\label{eest4}
\begin{aligned}
\left|\frac{R}{c_p}\left(\frac1p\omega_1(\he-L\hq),\he\right)\right|&\leq c|\omega_1|\big(\|\he\|^2_{L^4}+\|\he\|_{L^4}\|\hq\|_{L^4}\big)\\
&\leq c\|\vv_1\|\big(|\he|^{1/2}\|\he\|^{3/2}+|\he|^{1/4}\|\he\|^{3/4}|\hq|^{1/4}\|\hq\|^{3/4}\big)\\
&\leq \frac{\kappa_{T}}{10}\|\he\|^2+c\|\vv_1\|^4(|\he|^2+|\hq|^2)+\frac{\kappa_{q}}{8\delta}\|\hq\|^2,
\end{aligned}
\end{equation}
where we took advantage of the Sobolev embedding $H^{3/4}\subset L^4$ and interpolation inequalities. In a similar manner,
\begin{equation}\label{eest5}
\begin{aligned}
\left|\frac{R}{c_p}\left(\frac1p\ho(e_2-Lq_2),\he\right)\right|&\leq  c\|\ho\|_{L^6} (|e_2|+|q_2\|)\|\he\|_{L^3} \\
&\leq c|A_{\vv}\hv| (|e_2|+|q_2|)|\he|^{1/2}\|\he\|^{1/2} \\
&\leq  \frac{\delta}{8}|A_{\vv}\hv|^2+\frac{\kappa_{T}}{10}\|\he\|^2+c(|e_2|^4+|q_2|^4)|\he|^2. 
\end{aligned}
\end{equation}
Therefore, in view of \eqref{eest1}--\eqref{eest5}, the energy equation \eqref{eest} becomes
\begin{equation}\label{eestineq}
\begin{aligned}
\frac{\dd}{\dd t}|\he|^2+\kappa_{T}\|\he\|^2&
\leq \frac{\delta}{2}|A_{\vv}\hv|^2 +\frac{\kappa_{q}}{2\delta}\|\hq\|^2+c \big[1+|e_2|^2\big]\|e_2\|^2\|\hv\|^2 +c\|\vv_1\|^4 \big(|\he|^2+|\hq|^2\big).
\end{aligned}
\end{equation}
It remains to deal with the specific humidity equation. The corresponding energy equality reads
\begin{equation}\label{qest}
\begin{aligned}
&\frac12\frac{\dd}{\dd t}|\hq|^2+a_q(\hq,\hq)+\left(\frac1p\omega_1^-F^+(T_2)(h_{q_1}-h_{q_2}),\hq \right)=-b(\hv,q_2,\hq)\\
&\qquad
-\left(\frac1p\omega_1^-h_{q_1}(F^+(T_1)-F^+(T_2)),\hq\right)-(\frac1ph_{q_2}F^+(T_2)(\omega_1^--\omega_2^-),\hq).
\end{aligned}
\end{equation}
As a consequence of the monotonicity of the multivalued map $q\mapsto H(q-q_s)$, we find
\begin{equation}\label{qest1}
\left(\frac1p\omega_1^-F^+(T_2)(h_{q_1}-h_{q_2}),\hq \right)\geq 0
\end{equation}
Also, arguing as in \eqref{eest1}--\eqref{eest2}, we infer that
\begin{equation}\label{qest2}
\begin{aligned}
|b(\hv,q_2,\hq)|\leq\frac{\delta^2}{4}|A_{\vv}\hv|^2+\frac{\kappa_{q}}{6}\|\hq\|^2+c(1+|q_2|^2) \|q_2\|^2\|\hv\|^2.
\end{aligned}
\end{equation}
Moreover, since $\|h_{q_1}\|_{L^\infty}\leq 1$ and $F^+$ is globally Lipschitz-continuous, we can write
\begin{equation}\label{qest3}
\begin{aligned}
\left|\left(\frac1p\omega_1^-h_{q_1}(F^+(T_1)-F^+(T_2)),\hq\right)\right|&\leq c|\omega_1^-|\| F(T_1)-F(T_2)\|_{L^6}\|\hq\|_{L^3}\\
&\leq c \|\vv_1\| \|\hT\|_{L^6}|\hq|^{1/2}\|\hq\|^{1/2}\\
&\leq c \|\vv_1\| (\|\he\|+\|\hq\|)|\hq|^{1/2}\|\hq\|^{1/2}\\
&\leq \frac{\delta \kappa_{T}}{4} \|\he\|^2+\frac{\kappa_{q}}{6}\|\hq\|^2+c\|\vv_1\|^4|\hq|^2.
\end{aligned}
\end{equation}
In a simpler way, recalling
that $F$ is a bounded function, we obtain
\begin{equation}\label{qest4}
\left|\left(\frac1ph_{q_2}F^+(T_2)(\omega_1^--\omega_2^-),\hq\right)\right|\leq c|\ho||\hq|\leq c\|\hv\|^2+\frac{\kappa_{q}}{6} \|\hq\|^2.
\end{equation}
Collecting \eqref{qest1}--\eqref{qest4}, the equation \eqref{qest} turns into
\begin{equation}\label{qestineq}
\frac{\dd}{\dd t}|\hq|^2+\kappa_{q}\|\hq\|^2\leq \frac{\delta^2}{2}|A_{\vv}\hv|^2 
+\frac{\delta\kappa_{T}}{2}\|\he\|^2+c \|\vv_1\|^4|\hq|^2+ c(1+\|q_2\|^2+|q_2|^2\|q_2\|^2)\|\hv\|^2.
\end{equation}
We now use the fact that the norm $|A_{\vv}^{1/2}\hv|$ is equivalent to the norm $\|\hv\|$ and put together \eqref{vestineq}, \eqref{eestineq}, and \eqref{qestineq} in the following way. The energy functional
$$
\Psi(t)=\delta^2|A_{\vv}^{1/2}\hv(t)|^2+\delta|\he(t)|^2+|\hq(t)|^2
$$
satisfies the differential inequality
$$
\Psi^\prime(t) \leq g(t) \Psi(t), \qquad t\in(0,\T],
$$
where 
\begin{align*}
g(t)=c\big[&1+\|\vv_2\|_{H^2}^2+ \|\vv_1\|^2\|\vv_1\|_{H^2}^2+ \|\vv_2\|^2\|\vv_2\|_{H^2}^2 \\ 
&+\|\vv_1\|^4+\|e_2\|^2+|e_2|^2\|e_2\|^2
+\|q_2\|^2+|q_2|^2\|q_2\|^2\big].
\end{align*}
The fact that  $(\vv_1,e_1,q_1)$ and $(\vv_2,e_2,q_2)$ are quasi-strong solutions ensures that 
$g\in L^1(0,\T)$. We can therefore apply the standard Gronwall inequality, to obtain the continuous 
dependence estimate
$$
\Psi(t)\leq \Psi(0)\exp\left(\int_0^{\T}g(t)\dd t\right),
$$
which is stated explicitly in the statement of Theorem~\ref{thm:unieq}, and whose proof is now achieved.

\begin{remark}
The only step in which the replacement of $F$ with its positive part $F^+$ has been used is in \eqref{qest1},
to exploit monotonicity properties of the set-valued map $H(q-q_s)$. This appears unavoidable at the moment,
and it allows to circumvent the difficulty of dealing with non-Lipschitz nonlinearities, by using instead monotonicity 
arguments.
\end{remark}

\section{Maximum principles}\label{sec:maxprinc}

\noindent In order to prove uniqueness of solutions, we  replaced
the nonlinear function $F$ by $F^+$. 
This amounts to requiring that the temperature distribution $T$ satisfies the bounds given by \eqref{eq:nonbound}, namely
\begin{equation}\label{eq:nonbound2}
0\leq T\leq \frac{L R}{c_pR_v}\simeq 1548K, \qquad \text{a.e. in } \M\times [0,\T].
\end{equation}
Note in addition that the physical model \eqref{eq1.1.1}--\eqref{eq1.1.6} is probably not valid anymore if
$T>1548K$, for possibile utilization in extraterrestrial atmospheres. 

In \cite{CFTT}, physical bounds of the form \eqref{eq:nonbound2} were derived for both the temperature and the specific
humidity. Here, the picture is quite similar, despite the fact that the velocity vector field $\vv$ is not a given datum
anymore, but part of the unknowns. In particular, one can repeat word for word the arguments in 
\cite[Proposition 4.1]{CFTT}
to obtain a positivity result.

\begin{proposition}\label{prop:posit}
With the hypotheses in Theorem~\ref{thm6.2.1}, 
suppose $T_0,q_0$ and $S_T,S_q$ are positive functions. 
Then we have $T(\cdot,t),q(\cdot,t)\geq 0$  almost everywhere in $\M$ and for every  
$t\in [0,\T]$.
\end{proposition}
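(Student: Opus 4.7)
The strategy is to test the potential-temperature and specific-humidity equations against the negative parts $-\theta^-$ and $-q^-$, respectively, and close the resulting differential inequalities via Gronwall. The positivity of $T$ and that of $\theta$ are equivalent, since $T=(p/p_0)^{R/c_p}\theta$ and the prefactor is strictly positive on $[p_0,p_1]$; moreover $T_0\geq 0$ is equivalent to $\theta_0\geq 0$. Thus it suffices to establish $\theta(\cdot,t)\geq 0$ and $q(\cdot,t)\geq 0$ a.e.

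For $q$, I would take $\tilde q=-q^-\in L^2(0,\T;H^1)$ in the weak formulation \eqref{eq:vTq3}. The time derivative yields $\langle\partial_t q,-q^-\rangle=\tfrac12\ddt|q^-|^2$; the coercivity of $a_q$ gives $a_q(q,-q^-)=a_q(-q^-,-q^-)\geq \kappa_q\|q^-\|^2$ because $\nabla q^+$ and $\nabla q^-$ (and the corresponding boundary traces) have disjoint supports; and $b(\vv,q,-q^-)=0$ by the cancellation \eqref{eq2.3.8}. The decisive structural point is that $q_s>0$: on $\{q<0\}\subset\{q<q_s\}$ one has $h_q(x,y,p)=0$ by \eqref{amorino1}, so the nonlinear source $(\frac1p\omega^-h_qF(T),-q^-)$ vanishes identically. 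Finally, $(S_q,-q^-)\leq 0$ since $S_q\geq 0$ and $-q^-\leq 0$. Therefore
\[
\ddt|q^-|^2+2\kappa_q\|q^-\|^2\leq 0,
\]
and $q_0\geq 0$ gives $q^-\equiv 0$ on $[0,\T]$.

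For $\theta$ the argument is similar but requires more work because the nonlinearity does not vanish on $\{\theta<0\}$. I would test \eqref{eq2.1.8b} with $\tilde\theta=-\theta^-$. The time derivative and $a_\theta$ are handled as above. The lower-order form $m_\theta(\theta,-\theta^-)$ is either already $\leq 0$ (the skew-symmetric piece of $m_\theta$ cancels pointwise on $\{\theta<0\}$, where $\partial_p\theta=-\partial_p\theta^-$) or may be bounded via \eqref{eq2.3.5} and absorbed by $\tfrac{\kappa_T}{2}\|\theta^-\|^2+c|\theta^-|^2$. The trilinear term vanishes: $b(\vv,\theta,-\theta^-)=0$. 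For the nonlinear right-hand side, on $\{\theta<0\}$ the Lipschitz bound $|\tilde F(p,\theta)|\leq c_{\tilde F}|\theta^-|$ from \eqref{eq:tF} (recall $\tilde F(p,0)=0$) combined with $\omega^-\leq|\omega|$ and H\"older--Sobolev gives
\[
\left|\left(\tfrac{L}{c_p}\bigl(\tfrac{p_0}{p}\bigr)^{R/c_p}\tfrac1p\omega^-h_q\tilde F(p,\theta),-\theta^-\right)\right|\leq c\int_\M \omega^-(\theta^-)^2\,\dd\M\leq c|\omega|_{L^3}|\theta^-|\,\|\theta^-\|.
\]
Since $S_\theta\geq 0$ also yields $(S_\theta,-\theta^-)\leq 0$, using Young's inequality to absorb $\|\theta^-\|^2$ into the diffusion I would obtain
\[
\ddt|\theta^-|^2+\kappa_T\|\theta^-\|^2\leq c\bigl(1+|\omega|_{L^3}^2\bigr)|\theta^-|^2.
\]
The strong-solution regularity $\vv\in L^2(0,\T;H^2)$ yields $\omega\in L^2(0,\T;L^3)$ via \eqref{eq2.1.5} and the Sobolev embedding $H^{1/2}\hookrightarrow L^3$, so the coefficient lies in $L^1(0,\T)$. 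Gronwall and $\theta_0^-=0$ give $\theta^-\equiv 0$.

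The only delicate step is the treatment of the nonlinear source in the $\theta$-equation: it is here that one must simultaneously exploit the vanishing of $\tilde F$ at $\theta=0$ (turning the source into a quadratic quantity in $|\theta^-|$) and the integrability of $\omega$ provided by the strong-solution regularity. The $q$-equation is structurally easier because the Heaviside selection $h_q$ vanishes identically on $\{q<0\}$ as soon as $q_s>0$, so the nonlinear coupling drops out entirely. This is why the argument of \cite[Proposition~4.1]{CFTT} transfers verbatim to the present setting despite $\vv$ now being an unknown rather than a datum.
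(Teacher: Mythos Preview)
Your argument is correct and follows the standard Stampacchia truncation approach—testing with the negative part and closing via Gronwall—which is exactly what the paper invokes by referring to \cite[Proposition~4.1]{CFTT}. In particular, your identification of the two key structural points (that $h_q$ vanishes on $\{q<0\}$ since $q_s>0$, and that $\tilde F(p,0)=0$ makes the $\theta$-source quadratic in $\theta^-$ so that the available integrability of $\omega$ from the strong-solution regularity suffices) is precisely why the CFTT argument carries over verbatim, as the paper asserts without further detail.
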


The main difference compared to \cite{CFTT} concerns the upper bounds. Specifically, a key assumption there was 
that $\omega\in L^\infty(\M\times(0,\T))$, which is not compatible with our estimates above on strong solutions. Here, we will instead
exploit a technique reminiscent of the famous nonlinear iteration of De Giorgi \cite{DG} in order to prove the following
$L^\infty$ bound on the temperature.

\begin{proposition}\label{prop:Tbound}
With the hypotheses in Theorem~\ref{thm6.2.1}, we
suppose that $T_0\in L^\infty$ is positive and we further assume that 
$S_T\in L^2(0,\T;H^1)$.
Then there exists  a positive constant
$M_1$ such that
\begin{equation}\label{eq:Tqbound}
0\leq T\leq M_1, \qquad \text{a.e. in } \M\times[0,\T].
\end{equation}
Specifically, we have
$$
M_1=M_1(\T,\|\vv_0\|,\|T_0\|, \|T_0\|_{L^\infty},\|q_0\|,\|S_T\|_{L^2(0,\T;H^1)},\norm{(\bs S_{\vv}, S_{T}, S_{q})}_{L^2(0,\T; H)}).
$$
\end{proposition}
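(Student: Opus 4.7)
The plan is a De Giorgi-type level-set iteration on the $T$-equation \eqref{eq:vTq2}; the lower bound $T \geq 0$ is already Proposition \ref{prop:posit}, so only the upper bound needs attention. I would fix $K_0 \geq \|T_0\|_{L^\infty}$ and, for a constant $M > 0$ to be chosen in the end, introduce the increasing levels $k_n = K_0 + M(1 - 2^{-n})$ so that $k_0 = K_0$ and $k_n \nearrow K_0 + M$. Setting $T_n = (T - k_n)^+$, the strong-solution regularity of Theorem \ref{thm6.2.1} ensures $T_n \in L^\infty(0,\T; L^2) \cap L^2(0,\T; H^1)$, with $T_n(0) \equiv 0$ since $T_0 \leq K_0 \leq k_n$. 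The aim is to show that, for $M$ large enough, the sequence
$$Y_n := \sup_{t\in[0,\T]} |T_n(t)|^2 + \int_0^\T \|T_n(t)\|^2\, \dd t$$
tends to zero, which forces $T \leq K_0 + M$ almost everywhere in $\M \times [0,\T]$.

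The first step is to test \eqref{eq:vTq2} against $T_n$. Writing $T = T_n + k_n$ on $\{T_n > 0\}$, the transport term vanishes by \eqref{eq2.3.8} and the tangency $\uu \cdot \boldsymbol n|_{\partial \M} = 0$, since
$$b(\vv, T, T_n) = b(\vv, T_n, T_n) + k_n\, b(\vv, 1, T_n) = 0.$$
The dissipation provides $a_T(T, T_n) \geq \kappa_T \|T_n\|^2$ because $a_T(k_n, T_n)$ produces only nonnegative boundary contributions. Using $|F^+| \leq C_F$, the saturation source satisfies $|D(\omega, T, h_q)| \leq C|\omega|$, while the antidissipative piece decomposes as
$$m_T(\omega, T, T_n) = \int_\M \frac{R}{c_p p}\omega\, T_n^2 \, \dd\M + k_n \int_\M \frac{R}{c_p p}\omega\, T_n \, \dd\M.$$
Both summands will be controlled by H\"older and Sobolev interpolation using $\omega \in L^2(0,\T; L^3)$, which follows from the strong bound $\vv \in L^2(0,\T; H^2)$ combined with \eqref{eq2.1.5} and the embedding $H^{1/2}(\M) \hookrightarrow L^3(\M)$; the source $(S_T, T_n)$ is handled by Cauchy--Schwarz. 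After Young's inequality and a Gronwall step that absorbs the $L^1_t$-factor $\|\omega\|_{L^3}^2$, one arrives at
$$Y_n \leq C\,(1 + k_n)^2 \int_0^\T\!\!\int_{A_n(t)} \bigl(|\omega|^2 + |S_T|^2 + 1\bigr)\, \dd\M\, \dd t,$$
where $A_n(t) = \{x \in \M : T(x,t) > k_n\}$.

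The iteration closes as follows. On $\{T_n > 0\}$ one has $T_{n-1} \geq k_n - k_{n-1} = M 2^{-n}$, so Chebyshev gives $|A_n(t)| \leq (2^n/M)^r \|T_{n-1}(t)\|_{L^r}^r$ for any $r \geq 1$. Combining this with the parabolic embedding $L^\infty(0,\T;L^2) \cap L^2(0,\T;H^1) \hookrightarrow L^{10/3}(\M \times (0,\T))$ converts the energy bound into a recursion
$$Y_n \leq C\, b^n\, \frac{Y_{n-1}^{1+\alpha}}{M^\beta},\qquad b > 1,\ \alpha,\beta > 0,$$
with a constant $C$ depending on the quantities listed in the statement of the proposition. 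A standard De Giorgi iteration lemma then yields $Y_n \to 0$ whenever $M$ exceeds a threshold $M_* = M_*(Y_0)$, and $Y_0$ itself is controlled by the data via Theorem \ref{thm6.2.1}. Setting $M_1 = K_0 + M_*$ concludes the argument. The main obstacle is the antidissipative term $m_T(\omega, T, T_n)$ together with the linear-in-$k_n$ remainder it produces: closing the recursion in $M$ crucially uses the replacement of $F$ by the bounded $F^+$ (which downgrades the saturation source to an $|\omega|$-bound rather than an $|\omega T|$-bound) and the full strength of the $L^2(0,\T;H^2)$-regularity of $\vv$ to place $\omega$ in $L^2(0,\T;L^3)$; the hypothesis $S_T \in L^2(0,\T; H^1)$ contributes the extra spatial integrability needed to balance the growth in $k_n$ of the source contribution.
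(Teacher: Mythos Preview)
Your overall De Giorgi scheme matches the paper's, but there is a genuine gap at the passage from your intermediate bound
\[
Y_n \;\leq\; C\,(1+k_n)^2 \int_0^{\T}\!\!\int_{A_n(t)} \bigl(|\omega|^2 + |S_T|^2 + 1\bigr)\,\dd\M\,\dd t
\]
to the claimed recursion $Y_n \leq C\,b^n\,Y_{n-1}^{1+\alpha}/M^{\beta}$ with $\alpha,\beta>0$. The issue is the $|\omega|^2$ term together with the prefactor $(1+k_n)^2\sim M^2$. The best space--time integrability available for $\omega$ is $L^2(0,\T;L^6)$ (from $\vv\in L^2(0,\T;H^2)$) or, by interpolation with $L^\infty(0,\T;L^2)$, $L^{10/3}(\M\times(0,\T))$. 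Using the latter and the space--time Chebyshev bound $|\{T_n>0\}|\leq (2^n/M)^{10/3}Y_{n-1}^{5/3}$, one obtains
\[
\int_0^{\T}\!\!\int_{A_n}|\omega|^2 \;\leq\; \|\omega\|_{L^{10/3}}^2\,|\{T_n>0\}|^{2/5}
\;\leq\; C\,(2^n/M)^{4/3}\,Y_{n-1}^{2/3},
\]
and hence $(1+k_n)^2\int_{A_n}|\omega|^2 \leq C\,(2^n)^{4/3}\,M^{2/3}\,Y_{n-1}^{2/3}$. This has a \emph{positive} power of $M$ and a \emph{sublinear} power of $Y_{n-1}$; the iteration lemma does not apply and $Y_n$ need not tend to zero. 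No choice of exponents in H\"older/Chebyshev repairs this, because to beat the $M^2$ prefactor via $|\{T_n>0\}|^{1-2/p}$ one would need $\omega\in L^p_{t,x}$ with $p>5$, which is not available. The same obstruction hits the $|S_T|^2$ term (only $S_T\in L^2_tL^6_x$), which yields $Y_{n-1}^{2/3}$ as well.

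The paper avoids this by \emph{never} applying Cauchy--Schwarz to produce $\int_{A_n}|\omega|^2$; it keeps the $T_k$ factor throughout. Concretely, there is no Gronwall step: after integrating the level-set energy identity one bounds each of the four terms $\int\!\!\int|\omega||T_k|^2$, $\lambda_k\!\int\!\!\int|\omega||T_k|$, $\int\!\!\int|\omega||T_k|$, $\int\!\!\int|S_T||T_k|$ directly. For instance, for the linear term one pairs $\omega\in L^6_x$ with $T_k\in L^{6/5}_x$, then uses the pointwise bound $\mathds{1}_{\{T_k>0\}}\leq (2^k/M)\,T_{k-1}$ to replace $|T_k|^{6/5}$ by $(2^k/M)^{8/5}|T_{k-1}|^{14/5}$; interpolating $L^{14/5}$ between $L^2$ and $L^6$ yields the superlinear factor $Q_{k-1}^{7/6}$ and the net recursion $Q_k\leq C_0\,M^{-1/3}\,4^k\,Q_{k-1}^{7/6}$. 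The quadratic term $\int\!\!\int|\omega||T_k|^2$ is handled the same way (pairing $\omega\in L^6$ with $T_k^2\in L^{6/5}$), also producing $Q_{k-1}^{7/6}/M^{1/3}$. In short: discarding the $T_n$ factor by squaring is exactly what breaks the balance between the $M$-growth of the coefficient and the $M$-decay from Chebyshev; retaining it is what makes the iteration superlinear. A minor side remark: the boundedness $|F|\leq C_F$ (from \eqref{N2}) already suffices to reduce the saturation source to an $|\omega|$ bound, so the replacement by $F^+$ plays no role in this particular proposition.
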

The only extra requirement is $S_T\in L^2(0,\T;H^1)$. The above proposition will be proven in the 
subsequent Section \ref{sub:degiorgi}. Moreover, as the equation for $q$ is similar to the equation for $T$,
an analogous result holds for the specific humidity.

\begin{proposition}\label{prop:qbound}
With the hypotheses in Theorem~\ref{thm6.2.1}, assume that
$q_0\in L^\infty$ is positive and
$S_q\in L^2(0,\T;H^1)$. Then
there exist  a positive constant
$M_2$ such that
\begin{equation}\label{eq:Tqbound2}
0\leq q\leq M_2, \qquad \text{a.e. in } \M\times[0,\T].
\end{equation}
Specifically, we have
$$
M_2=M_2(\T,\|\vv_0\|,\|T_0\|,\|q_0\|,\|q_0\|_{L^\infty},\|S_q\|_{L^2(0,\T;H^1)},\norm{(\bs S_{\vv}, S_{T}, S_{q})}_{L^2(0,\T; H)}).
$$
\end{proposition}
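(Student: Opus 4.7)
The plan is to mirror the De Giorgi--type level--set argument that establishes Proposition~\ref{prop:Tbound}, taking advantage of the fact that the $q$-equation is structurally simpler than the $T$-equation: it carries no antidissipative term of the form $\frac{R}{c_pp}\omega T$, and its nonlinear right-hand side $-D(\omega,T,h_q)$ is already uniformly bounded in $T$ and $h_q$ through $|F|\leq C_F$ and $h_q\in[0,1]$. Positivity $q\geq 0$ is furnished by Proposition~\ref{prop:posit}, so only the upper bound needs to be established.

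Set $k_0=\max\{q_s,\|q_0\|_{L^\infty}\}$ and, for every $k\geq k_0$, use $[q-k]^+\in L^2(0,\T;H^1)$ as test function in \eqref{eq:vTq3}. Since $\uu=(\vv,\omega)$ is divergence-free and $\uu\cdot\boldsymbol n=0$ on $\partial\M$, the cancellation in Lemma~\ref{lem2.1} yields $b(\vv,q,[q-k]^+)=0$. The diffusive bilinear form gives $a_q(q,[q-k]^+)\geq \kappa_q\|[q-k]^+\|^2$, the constant $k$ contributing only a favorable non--negative boundary term on $\Gamma_i$. Because $k\geq q_s$ forces $h_q=1$ pointwise on $\{q>k\}$, the reaction contribution obeys
\begin{align*}
-\bigl(D(\omega,T,h_q),[q-k]^+\bigr)
=-\int_{\{q>k\}}\frac{1}{p}\omega^- F(T)[q-k]^+\,\dd\M
\leq \frac{C_F}{p_0}\int_\M \omega^-\,[q-k]^+\,\dd\M.
\end{align*}
Integrating in time and combining H\"older's inequality with the Sobolev embedding $H^1\hookrightarrow L^6$, with $\omega\in L^2(0,\T;L^6)$ (inherited from $\vv\in L^2(0,\T;H^2)$ via \eqref{eq2.1.5}), and with the hypothesis $S_q\in L^2(0,\T;H^1)\hookrightarrow L^2(0,\T;L^6)$, one obtains, writing $A_k(t)=\{q(\cdot,t)>k\}$ and $\mu(k)=\int_0^\T|A_k(t)|\,\dd t$, a recursion of the form
\begin{align*}
\phi(k):=\sup_{t\in[0,\T]}|[q-k]^+(t)|^2+\int_0^\T\|[q-k]^+(t)\|^2\,\dd t
\leq \frac{C}{(k-h)^{2\sigma}}\,\phi(h)^{1+\sigma},
\qquad k>h\geq k_0,
\end{align*}
for a suitable $\sigma>0$. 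A standard De Giorgi iteration lemma then produces a finite $k^{*}$ with $\phi(k^{*})=0$, that is $q\leq k^{*}=:M_2$ almost everywhere in $\M\times[0,\T]$.

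The principal technical obstacle is securing the strictly super-linear exponent $1+\sigma>1$ in the iteration: every right-hand side contribution must yield a positive power of $\mu(k)$ through H\"older's inequality, which is precisely what forces the reinforced hypothesis $S_q\in L^2(0,\T;H^1)$ instead of merely $L^2(0,\T;L^2)$. Once this is in place, the explicit form of $M_2$ is obtained by tracking the dependence of the constants on $\|q_0\|_{L^\infty}$, the strong-solution norms of $\vv$ and $\omega$ supplied by Theorem~\ref{thm6.2.1} and estimate \eqref{eq4.4.66}, the constant $C_F$, and $\|S_q\|_{L^2(0,\T;H^1)}$, thereby matching the dependence stated in the proposition.
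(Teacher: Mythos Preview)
Your proposal is correct and follows essentially the same De Giorgi level-set iteration that the paper carries out in detail for Proposition~\ref{prop:Tbound} and explicitly instructs the reader to repeat (in simplified form) for Proposition~\ref{prop:qbound}. You have correctly identified the simplifications---the absence of the antidissipative term $\frac{R}{c_pp}\omega q$ leaves only the $D$ and $S_q$ contributions on the right---and your two-level recursion $\phi(k)\leq C(k-h)^{-2\sigma}\phi(h)^{1+\sigma}$ is the standard equivalent reformulation of the paper's dyadic iteration $Q_k\leq C_0 M^{-\alpha}4^k Q_{k-1}^{7/6}$ with $\lambda_k=M(1-2^{-k})$.
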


The proof of the above proposition will be left to the reader, as it is shorter and simpler than that of
Proposition \ref{prop:Tbound}.

\begin{remark}\label{rmk:dissp}
The main drawback of such bounds is the dependence on the final time $\T$. However, this is not caused
by a flaw in the proof, but rather from the non-uniformity of the basic $L^2$ energy estimate \eqref{eq4.4.66}.
This seems to be an intrinsic feature of the model, caused essentially by the presence of the anti-dissipative
term 
$$
-\frac{R}{c_p p}\omega T
$$
in the temperature equation.
\end{remark}

The methods of \cite{CFTT} work here as well for the specific humidity, without further assumptions on $\omega$. 
Complementary to Proposition \ref{prop:qbound} in the (physical) case of zero forcing, we also have the following:
\begin{proposition}\label{prop:qbound2}
With the hypotheses in Theorem~\ref{thm6.2.1}, assume that $q$ is a strong solution 
to \eqref{eq:vTq3} on $[0,\T]$, with the nonlinear function $F$ replaced by
its positive part $F^+$, and with positive initial data $q_0\in L^\infty$.
If $S_q=0$, then
\begin{equation}\label{eq:Tqbound3}
0\leq q\leq \|q_0\|_{L^\infty} \qquad \text{a.e. in } \M\times[0,\T].
\end{equation}
\end{proposition}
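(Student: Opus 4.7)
The positivity $q\geq 0$ is Proposition~\ref{prop:posit}, so the plan is to focus on the upper bound. Set $M=\|q_0\|_{L^\infty}\geq 0$ and test the $q$-equation \eqref{eq:vTq3} with $\tilde q=(q-M)^+$. For a strong solution, $q\in C([0,\T];H^1)\cap L^2(0,\T;H^2)$ and $\de_t q\in L^2(0,\T;L^2)$; since $M$ is constant, $(q-M)^+$ belongs to $L^2(0,\T;H^1)$ and hence lies in the admissible test space, and the chain rule yields
\[
\la\de_t q,(q-M)^+\ra=\tfrac12\ddt |(q-M)^+|^2.
\]

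The first key step is to exploit the usual three structural cancellations. For the convective term, writing it in three-dimensional divergence form and using $\dive_3\uu=0$ together with the boundary conditions ($\omega=0$ on $\Gamma_i\cup\Gamma_u$ and $\vv\cdot\boldsymbol n=0$ on $\Gamma_\ell$), we obtain
\[
b(\vv,q,(q-M)^+)=\tfrac12\int_\M \uu\cdot\nabla_3 \bigl([(q-M)^+]^2\bigr)\,\dd\M=0,
\]
exactly as in \eqref{eq2.3.8}. For the dissipation, I would use that $\nabla q=\nabla(q-M)^+$ and $\de_p q=\de_p(q-M)^+$ on $\{q>M\}$ (and both vanish on $\{q\leq M\}$), so the volume contributions to $a_q(q,(q-M)^+)$ reproduce the squared seminorm of $(q-M)^+$. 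The boundary contribution on $\Gamma_i$ reads
\[
\alpha_q\int_{\Gamma_i}\Bigl(\tfrac{gp_1}{R\overline T}\Bigr)^2 q\,(q-M)^+\,\dd\Gamma_i
=\alpha_q\int_{\Gamma_i}\Bigl(\tfrac{gp_1}{R\overline T}\Bigr)^2\bigl[((q-M)^+)^2+M(q-M)^+\bigr]\,\dd\Gamma_i\geq 0,
\]
because $M\geq 0$ and $(q-M)^+\geq 0$; consequently $a_q(q,(q-M)^+)\geq 0$.

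The second key step concerns the sign of the source term. With $F$ replaced by $F^+$, the element $D(\omega,T,h_q)=\frac{1}{p}\omega^- h_q F^+(T)$ is pointwise nonnegative, since $\omega^-\geq 0$, $h_q\in[0,1]$ by \eqref{amorino1}--\eqref{amorino2}, $F^+(T)\geq 0$ and $p\geq p_0>0$. Pairing with the nonnegative test function $(q-M)^+$ and recalling $S_q=0$, the right-hand side of \eqref{eq:vTq3} satisfies
\[
-(D(\omega,T,h_q),(q-M)^+)\leq 0.
\]
Putting everything together yields the differential inequality
\[
\tfrac12\ddt |(q-M)^+|^2\leq 0,\qquad t\in(0,\T],
\]
and since $(q_0-M)^+\equiv 0$ by the definition of $M$, a standard integration gives $(q-M)^+\equiv 0$ on $[0,\T]$, i.e.\ $q\leq M$ a.e., which is \eqref{eq:Tqbound3}.

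The only delicate point I anticipate is the rigorous justification of the chain rule $\la\de_t q,(q-M)^+\ra=\tfrac12\ddt|(q-M)^+|^2$ and the use of $(q-M)^+$ as a test function: this is standard for strong solutions once one checks that the map $r\mapsto ((r-M)^+)^2/2$ is convex and $C^1$ with Lipschitz derivative, so that it can be composed with $q\in H^1(0,\T;L^2)\cap L^2(0,\T;H^1)$. The replacement of $F$ by $F^+$ is used solely to guarantee the sign of $D$; without it, the source term could be negative and drive $q$ above $M$. The argument is otherwise insensitive to the size of $q_s$, the presence of the nonlinearity $\widetilde F$, or the coupling with $\vv$ and $T$.
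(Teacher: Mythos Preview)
Your proof is correct and follows precisely the Stampacchia-type argument that the paper invokes from \cite{CFTT} (the paper does not reprove Proposition~\ref{prop:qbound2} in detail but refers to that method). The key sign observation $-\la\frac{1}{p}\omega^- h_q F^+(T),(q-M)^+\ra\leq 0$, the cancellation $b(\vv,q,(q-M)^+)=b(\vv,(q-M)^+,(q-M)^+)=0$, and the treatment of the $\Gamma_i$ boundary term via $q(q-M)^+=((q-M)^+)^2+M(q-M)^+\geq 0$ are exactly the ingredients the paper has in mind, and you correctly note that no $L^\infty$ bound on $\omega$ is needed here.
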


\begin{remark}
Property \eqref{eq:Tqbound3} is what one should expect from the physical 
considerations. Indeed, the specific humidity is the (dimensionless) 
ratio of the mass of water vapor to the total mass of the system.
One therefore expects that
$$
q\in [0,1], \qquad \text{a.e. in } \M\times [0,\T].
$$ 
Moreover, \eqref{eq:Tqbound3} also implies that if the initial datum is smaller than the saturation
concentration, then the under-saturated regime persists for all times.
If, as it should be (\cite[p.~163]{Hal71}), $q_s$ depends on $T$ (typically, 
$q_s=\frac{C}{p}\e^{-L/R_vT}$, as in \cite{Hal71}), then a decrease of $T$ produces a decrease of $q_s$,
thus increasing the chances of supersaturation. The mathematical theory of 
the case where $q_s$ depends on $T$ has been developed in \cite{BCT}.
\end{remark}

\begin{remark}
We observe once more that the truncation of $F$ is not required to prove the maximum principle
neither for the temperature nor for the specific humidity, as opposed to what was proved in
\cite{CFTT}, where the truncation of $F$ was assumed to prove the upper bound for $q$.  Indeed,
it was there crucial an estimate of the type
$$
-\la\frac1p  \omega^-h_q F ^+(T),[q-M_2]^+\ra\leq 0,
$$
when using the Stampacchia method on the specific humidity equation.
However, a bound in space and time (\emph{independent} of the final time $\T$), like in \eqref{eq:nonbound2} 
seems to be out of reach at the moment: the possibility of exponential growth of the $L^2$ norm
of the temperature (see \eqref{eq4.4.66}) caused by the anti-dissipative term mentioned in Remark \ref{rmk:dissp}
prevents us to prove uniform $L^\infty$ bounds as well (see \eqref{eq:iter3} and \eqref{eq:argentina}).
\end{remark}

\subsection{Upper bounds for the temperature via De Giorgi iterations}\label{sub:degiorgi}
We prove here Proposition \ref{prop:Tbound}.
We consider the weak formulation of the temperature equation \eqref{eq:vTq2}, and take the test
function $\widetilde T$ to be $T_\lambda=[T-\lambda]^+$, where $\lambda$ is any positive constant
such that
$$
\lambda\geq \|T_0\|_{L^\infty}.
$$
The corresponding energy equation reads
\begin{align*}
\frac12\ddt |T_\lambda|^2 + a_T(T_\lambda,T_\lambda)+ 
\lambda\alpha_T\int_{\Gamma_i} \left(\frac{gp_1}{R\overline T}\right)^2T_\lambda\,\dd\Gamma_i &=
m_T(\omega,T_\lambda,T_\lambda) +m_T(\omega,\lambda,T_\lambda)\\
&\quad+\frac{L}{c_p}( D(\omega, T,h_q),T_\lambda)+(S_{T},T_\lambda),
\end{align*}
where we took advantage of the orthogonality property of the trilinear form $b$, namely
$$
b(\vv,T,T_\lambda)=b(\vv,T-\lambda,T_\lambda)=b(\vv,T_\lambda,T_\lambda)=0.
$$
As a consequence, we have 
\begin{align*}
\ddt |T_\lambda|^2 +2 \kappa_T\|T_\lambda\|^2
 &\leq
c_1\int_\M |\omega||T_\lambda|^2 \dd\M+c_1\lambda\int_\M |\omega||T_\lambda| \dd\M\\
&\quad+c_2\int_\M |\omega||T_\lambda| \dd\M+2\int_\M |S_T||T_\lambda| \dd\M,
\end{align*}
where
$$
c_1=\frac{Rc_p}{p_0}, \qquad c_2=\frac{Lc_p}{p_0}C_F,\qquad (C_F>0 \text{ from \eqref{N2}}).
$$
Hence, integrating on $(0,t)$ and using the fact that $T_\lambda(0)=0$, we obtain
\begin{equation}\label{eq:iter1}
\begin{aligned}
\sup_{t\in[0,\T]}|T_\lambda(t)|^2 +2 \kappa_T\int_0^{\T}\|T_\lambda(t)\|^2\dd t
 &\leq
c_1\int_0^{\T}\int_\M |\omega||T_\lambda|^2 \dd\M\dd t+c_1\lambda\int_0^{\T}\int_\M |\omega||T_\lambda| \dd\M\dd t\\
&\quad+c_2\int_0^{\T}\int_\M |\omega||T_\lambda| \dd\M\dd t+2\int_0^{\T}\int_\M |S_T||T_\lambda| \dd\M\dd t.
\end{aligned}
\end{equation}
For $M\geq 2\|T_0\|_{L^\infty}$ to be fixed later, we define the increasing sequence of positive numbers
$$
\lambda_k=M(1-2^{-k}),\qquad k\geq 0,
$$
and set $T_k=T_{\lambda_k}$. Note that $T_k\leq T_{k-1}$ for every $k\geq 0$, and, moreover,
$$
T_{k-1}\geq 2^{-k}M \qquad \text{on } \{(x,y,p,t): T_k(x,y,p,t)>0\}.
$$
In turn,
\begin{equation}\label{eq:iter11}
\mathds{1}_{\{T_k>0\}}\leq \frac{2^k}{M}T_{k-1}, \quad \forall k\geq 1.
\end{equation}
Defining 
\begin{equation}\label{eq:iter0}
Q_k=\sup_{t\in[0,\T]}|T_k(t)|^2 +2 \kappa_T\int_0^{\T}\|T_k(t)\|^2\dd t,
\end{equation}
we infer from \eqref{eq:iter1} that, for every $k\geq 1$,
\begin{equation}\label{eq:iter2}
\begin{aligned}
Q_k &\leq
c_1\int_0^{\T}\int_\M |\omega||T_k|^2 \dd\M\dd t+c_1M\int_0^{\T}\int_\M |\omega||T_k| \dd\M\dd t\\
&\quad+c_2\int_0^{\T}\int_\M |\omega||T_k| \dd\M\dd t+2\int_0^{\T}\int_\M |S_T||T_k| \dd\M\dd t,
\end{aligned}
\end{equation}
where we used  $\|T_0\|_{L^\infty}\leq \lambda_k\leq M$  for each $k\geq 1$ from our choice of $M$. Moreover,
from the estimate \eqref{eq4.4.66}, which can now be extended up to $\T$ in view of the global existence result in Theorem
\ref{thm6.2.1}, we have
\begin{equation}\label{eq:iter3}
Q_0=\sup_{t\in[0,\T]}|T(t)|^2 +2 \kappa_T\int_0^{\T}\|T(t)\|^2\dd t\leq Ce^{C\T}\big(|(\vv_0,T_0,q_0)|^2 +\norm{(\bs S_{\vv}, S_{T}, S_{q})}_{L^2(0,\T; H)}^2\big).
\end{equation}
We now proceed with estimating the right hand side of \eqref{eq:iter2}. Note that it is enough to find upper bounds for the first two terms, as the third and fourth terms are very similar to the second one.

We have
\begin{align*}
\int_0^{\T}\int_\M |\omega||T_k|^2\dd\M\dd t&\leq \int_0^{\T}\|\omega\|_{L^6}\left(\int_\M |T_k|^{12/5}\dd\M\right)^{5/6}\dd t \\
&\leq
\|\omega\|_{L^2(0,\T;H^1)}\left(\int_0^{\T}\left(\int_\M |T_k|^{12/5}\dd\M\right)^{5/3}\dd t \right)^{1/2}.
\end{align*}
In view of \eqref{eq:iter11} and the fact that $T_k\leq T_{k-1}$, we have
\begin{align*}
\int_0^{\T}\left(\int_\M |T_k|^{12/5}\dd\M\right)^{5/3}\dd t
&\leq \int_0^{\T}\left(\int_\M |T_{k-1}|^{12/5}\mathds{1}_{\{T_k>0\}}^{2/5}\dd\M\right)^{5/3}\dd t  \\
&\leq \frac{2^{2k/3}}{M^{2/3}}\int_0^{\T}\left(\int_\M |T_{k-1}|^{14/5}\dd\M\right)^{5/3}\dd t 		\\
&= \frac{2^{2k/3}}{M^{2/3}}\int_0^{\T} \|T_{k-1}\|_{L^{14/5}}^{14/3}\dd t.
\end{align*}
Taking advantage of the interpolation inequality
$$
\|\varphi\|_{L^{14/5}}\leq \|\varphi\|_{L^2}^{4/7}\|\varphi\|^{3/7}_{L^6},
$$
we find that
\begin{align*}
\int_0^{\T}\left(\int_\M |T_k|^{12/5}\dd\M\right)^{5/3}\dd t
&\leq \frac{2^{2k/3}}{M^{2/3}}\int_0^{\T} \|T_{k-1}\|_{L^2}^{8/3}\|T_{k-1}\|^2_{L^6}\dd t\\
&\leq \frac{2^{2k/3}}{M^{2/3}}\sup_{t\in[0,\T]}|T_{k-1}(t)|^{8/3}\int_0^{\T} \|T_{k-1}(t)\|^2\dd t\\
&\leq \frac{2^{2k/3}}{2\kappa_TM^{2/3}} Q_{k-1}^{7/3}.
\end{align*}
Finally, we obtain the nonlinear estimate
\begin{equation}\label{eq:iterest1}
c_1\int_0^{\T}\int_\M |\omega||T_k|^2 \dd\M\dd t\leq \frac{c_1}{\sqrt{2\kappa_T}}\|\vv\|_{L^2(0,\T;H^2)} \frac{2^{k/3}}{M^{1/3}} Q_{k-1}^{7/6}.
\end{equation}
Now for the second term, we use similar arguments as before and obtain
\begin{align*}
\int_0^{\T}\int_\M |\omega||T_k|\dd\M\dd t\leq
\|\vv\|_{L^2(0,\T;H^2)}\left[\int_0^{\T}\left(\int_\M |T_k|^{6/5}\dd\M\right)^{5/3}\dd t \right]^{1/2}.
\end{align*}
Using again \eqref{eq:iter11}, we can estimate the above term as
\begin{align*}
\int_0^{\T}\left(\int_\M |T_k|^{6/5}\dd\M\right)^{5/3}\dd t
&\leq \int_0^{\T}\left(\int_\M |T_{k-1}|^{6/5}\mathds{1}_{\{T_k>0\}}^{8/5}\dd\M\right)^{5/3}\dd t  \\
&\leq \frac{2^{8k/3}}{M^{8/3}}\int_0^{\T}\left(\int_\M |T_{k-1}|^{14/5}\dd\M\right)^{5/3}\dd t 		\\
&= \frac{2^{8k/3}}{M^{8/3}}\int_0^{\T} \|T_{k-1}\|_{L^{14/5}}^{14/3}\dd t.
\end{align*}
Up to the constant in front of the integral, we are in the same situation as above. Hence,
\begin{align*}
\int_0^{\T}\left(\int_\M |T_k|^{6/5}\dd\M\right)^{5/3}\dd t
\leq \frac{2^{8k/3}}{2\kappa_TM^{8/3}} Q_{k-1}^{7/3},
\end{align*}
and thus
\begin{equation}\label{eq:iterest2}
c_1M\int_0^{\T}\int_\M |\omega||T_k| \dd\M\dd t\leq \frac{c_1}{\sqrt{2\kappa_T}}\|\vv\|_{L^2(0,\T;H^2)} \frac{2^{4k/3}}{M^{1/3}} Q_{k-1}^{7/6}.
\end{equation}
The third and fourth terms are estimated in the exact same way as the second term. We therefore obtain
\begin{equation}\label{eq:iterest3}
c_2\int_0^{\T}\int_\M |\omega||T_k| \dd\M\dd t\leq \frac{c_2}{\sqrt{2\kappa_T}}\|\vv\|_{L^2(0,\T;H^2)} \frac{2^{4k/3}}{M^{4/3}} Q_{k-1}^{7/6}.
\end{equation}
and
\begin{equation}\label{eq:iterest4}
2\int_0^{\T}\int_\M |S_T||T_k| \dd\M\dd t\leq \frac{2}{\sqrt{2\kappa_T}}\|S_T\|_{L^2(0,\T;H^1)} \frac{2^{4k/3}}{M^{4/3}} Q_{k-1}^{7/6}.
\end{equation}
In light of \eqref{eq:iter2} and by \eqref{eq:iterest1}--\eqref{eq:iterest4}, the nonlinear iteration may be written as
\begin{equation}\label{eq:iter4}
Q_k\leq \frac{C_0}{M^{1/3}}4^k Q_{k-1}^{7/6} ,\qquad \forall k\geq 1
\end{equation}
where we have assumed $M\geq 1$, and we have set 
$$
C_0:=C_0(\T,\|\vv_0\|,\|T_0\|,\|q_0\|,\|S_T\|_{L^2(0,\T;H^1)},\norm{(\bs S_{\vv}, S_{T}, S_{q})}_{L^2(0,\T; H)})
$$ 
to be the bound
on the velocity field given by Theorem~\ref{thm6.2.1}. Thanks to the nonlinearity in \eqref{eq:iter4}, 
$$
\lim_{k\to\infty} Q_k=0,
$$
provided we choose $M$ sufficiently large. In fact, it is possible to
find that $Q_k$ satisfies the explicit bound
\begin{equation}\label{eq:iter5}
Q_k\leq \left(\frac{Q_0 4^{42}C_0^6}{M^2} \right)^{\left(\frac76\right)^k} \left(\frac{M^{1/3}}{4^7C_0}\right)^6 4^{-6k} ,\qquad \forall k\geq 1.
\end{equation}
Hence, it suffices to impose the condition
$$
\frac{Q_0 4^{42}C_0^6}{M^2}\leq1,
$$ i.e.,
  \begin{equation}\label{eq:argentina}
    M\geq \sqrt{Q_0}2^{42}C_0^3.   
  \end{equation}
In turn, since $Q_0\leq C_0$, we require that
\begin{equation}\label{eq:iter6}
M\geq 2^{42}C_0^{7/2}.
\end{equation}
As $k\to\infty$, we have that $\lambda_k\to M$, and from \eqref{eq:iter0} we learn that
$$
[T-M]^+=\lim_{k\to\infty}T_k =0,
$$
from which the upper bound on the temperature follows.

\appendix

\section{Primitive equations of the atmosphere}\label{sec-kz-idea}
\noindent In \cite{KZ07}, the authors proved the global existence of strong 
solutions for the primitive equations of the ocean with  Dirichlet boundary 
conditions for the side boundary. Adapting these techniques, we are able to prove 
a similar result for the primitive equations of the atmosphere with  free-slip 
boundary conditions for the side boundary. The result in Theorem~\ref{thm6.1.1} below is essential 
for deriving the time-uniform $H^1$-estimate for the velocity $\vv$ and hence the global existence 
of \emph{strong} solutions for the primitive equations with saturation in Section \ref{sec5}.

In this appendix, we decouple the primitive equations of the atmosphere from the temperature and the humidity,
and consider the equation
\begin{equation}\label{eq6.1.1}
\begin{split}
	&\frac{\de \vv}{\de t} + \vv\cdot \nabla\vv + \omega \frac{\de \vv}{\de p}  + \nabla \Phi_s + \A_{\vv} \vv = \boldsymbol S_{\vv},\\
	&\dive\bs v +\frac{\de\omega}{\de p} = 0,
\end{split}
\end{equation}
with initial condition
\begin{equation}
	\bs v(x,y,p,0)=\bs v_0(x,y,p),
\end{equation}
and boundary conditions
\begin{equation}\label{eq6.1.3}
	\left(\nu_{\vv}\frac{\de \vv}{\de p} + \alpha_{\vv}\vv\right)\bigg|_{\Gamma_i}=\omega\big|_{\Gamma_i} = 0,\quad\;  \frac{\de \vv}{\de p}\bigg|_{\Gamma_u} =\omega\big|_{\Gamma_u}= 0,\quad\;  (\vv\cdot \boldsymbol n)\big|_{\Gamma_\ell}=\frac{\de (\vv\cdot\boldsymbol\tau)}{\de \boldsymbol n}\bigg|_{\Gamma_\ell}=0.
\end{equation}
Our result  resembles those in \cite[Theorem~2.1]{KZ07} and \cite[Theorem 2.1]{KZ08}.
The proof of Theorem~\ref{thm6.1.1} below actually follows the lines of \cite[Theorem 2.1]{KZ07} 
and \cite[Theorem 2.1]{KZ08}, where the difference is that instead of the ordinary three-dimensional 
Laplace operator, we consider the more complicated  operator $\A_{\vv}$ 
in \eqref{eq6.1.1}.  
This does not create additional difficulties for the energy estimates below. Nonetheless, 
we present the full details for the proof of Theorem~\ref{thm6.1.1} below 
for the sake of completeness.

\begin{theorem}\label{thm6.1.1}
	Let the space $\V$ be defined as in Section \ref{subsec2.3fs} 
	and assume that $\vv_0\in \V$ and $\bs S_{\vv} \in L^2(0,\T; L^2)$. Then 
	there exists a unique strong solution 
	$$
	\vv\in L^\infty(0,\T; \V)\cap L^2(0,\T; H^2)
	$$ 
	of the primitive equations \eqref{eq6.1.1}--\eqref{eq6.1.3}. 
\end{theorem}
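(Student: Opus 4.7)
The strategy is to combine Galerkin-based local well-posedness with an a priori $H^1$ estimate for the velocity that is uniform on every bounded time interval, in the spirit of \cite{KZ07,KZ08}. A standard Faedo-Galerkin procedure using the eigenfunctions of $A_{\vv}$, together with the arguments already employed in the proof of Lemma~\ref{lem3.5.3}, yields a unique strong solution on some maximal time interval $[0,t_{\max})$ satisfying $\vv\in L^\infty(0,t_{\max};\V)\cap L^2(0,t_{\max};H^2)$. The only issue is therefore to rule out finite-time blow-up by proving a uniform bound
\[
\sup_{t\in[0,\T]}\|\vv(t)\|^2 + \int_0^{\T}|A_{\vv}\vv(t)|^2\,\dd t \leq C\bigl(\T,\|\vv_0\|,\|\boldsymbol S_{\vv}\|_{L^2(0,\T;L^2)}\bigr).
\]
The $L^2$ estimate obtained by testing with $\vv$ itself gives $|\vv|\in L^\infty(0,\T)$ and $\|\vv\|\in L^2(0,\T)$, but this alone is insufficient because the vertical transport term $\omega\,\de_p\vv$ cannot be absorbed in the standard $H^1$ energy identity: since $\omega$ is recovered from $\vv$ by \eqref{eq2.1.5}, it involves one horizontal derivative of $\vv$ and so creates a quasilinear obstruction to closing the estimate directly.

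To overcome this, the plan is to use the barotropic/baroclinic splitting $\vv=\overline{\vv}+\widetilde{\vv}$ with
\[
\overline{\vv}(x,y,t)=\frac{1}{p_1-p_0}\int_{p_0}^{p_1}\vv(x,y,p,t)\,\dd p,\qquad \widetilde{\vv}=\vv-\overline{\vv},
\]
so that $\overline{\vv}$ is two-dimensional and, by \eqref{eq2.1.6}, divergence-free, while $\widetilde{\vv}$ has zero vertical average and $\omega$ depends only on $\widetilde{\vv}$. First I would derive an $L^\infty_t L^2\cap L^2_t H^1$ bound on $\de_p\vv$: differentiating \eqref{eq6.1.1} in $p$ and pairing with $(gp/R\overline{T})^2\de_p\vv$, the pressure-gradient term $\nabla\Phi_s$ disappears, the boundary terms generated by \eqref{eq6.1.3} are either zero or of the correct sign, and the delicate nonlinearity $\int_\M \de_p(\omega\,\de_p\vv)\cdot\de_p\vv\,\dd\M$ is handled by integration by parts together with $\de_p\omega=-\dive\vv$ and anisotropic $L^4_{x,y}L^2_p$ inequalities of the type already used in \eqref{eq3.5.9a}. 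Gronwall applied against the integrable bound coming from the $L^2$ energy estimate then closes the loop for $\de_p\vv$.

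With $\de_p\vv$ controlled and the vertical Poincar\'e inequality $|\widetilde{\vv}|\leq C|\de_p\vv|$ at our disposal, I would next bound $\widetilde{\vv}$ in $L^\infty_tL^4\cap L^2_t L^{12}$ by testing its evolution equation with $|\widetilde{\vv}|^2\widetilde{\vv}$; the vanishing vertical average and the fact that $\omega$ depends only on $\widetilde{\vv}$ render the nonlinear contributions tractable via anisotropic H\"older-Sobolev estimates. Finally, $\overline{\vv}$ satisfies a two-dimensional Navier-Stokes-like system whose forcing consists of the vertical average of $\boldsymbol S_{\vv}$ plus quadratic contributions from $\widetilde{\vv}$; a classical 2D $H^1$ energy estimate combined with the $L^4$ control of $\widetilde{\vv}$ yields a bound on $\|\overline{\vv}\|_{H^1}$, and summing the two contributions produces the desired bound on $\|\vv\|$. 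The main obstacle throughout is the handling of $\omega\,\de_p\vv$; the splitting is designed precisely to trade this bad term for controllable anisotropic norms of $\widetilde{\vv}$. The only genuinely new feature with respect to \cite{KZ07,KZ08} is the free-slip lateral boundary condition in \eqref{eq6.1.3} (in place of Dirichlet), but the integration-by-parts identities underpinning the $p$-differentiated and the $L^4$ estimates go through verbatim, since both $\vv\cdot\boldsymbol n$ and $\de_{\boldsymbol n}(\vv\cdot\boldsymbol\tau)$ vanish on $\Gamma_\ell$ and these conditions are compatible with the operator $\A_{\vv}$. Uniqueness at the strong-solution level follows by subtracting two solutions and applying the $H^1$ continuous-dependence argument of Lemma~\ref{lem3.5.3}, which uses exactly the same anisotropic estimates.
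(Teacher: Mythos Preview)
Your overall strategy follows the Cao--Titi barotropic/baroclinic scheme rather than the Kukavica--Ziane route the paper actually takes, and that is a legitimate alternative. However, the order in which you run the estimates contains a genuine gap: you claim that the $L^\infty_tL^2\cap L^2_tH^1$ bound on $\de_p\vv$ closes by Gronwall against the basic $L^2$ energy estimate, but it does not. After differentiating the advection in $p$ and cancelling what can be cancelled, you are left with cubic terms of the type $\int_\M(\de_p\vv\cdot\nabla)\vv\cdot\de_p\vv\,\dd\M$ and $\int_\M(\dive\vv)|\de_p\vv|^2\,\dd\M$, i.e.\ $\int_\M|\nabla\vv|\,|\de_p\vv|^2\,\dd\M$. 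Any H\"older/Sobolev splitting (anisotropic or not) that places $\nabla\vv$ in $L^2(\M)$ produces, after Young's inequality, a Gronwall factor proportional to $\|\vv\|^4$, and $\|\vv\|^4$ is \emph{not} in $L^1(0,\T)$ from the basic energy inequality alone (only $\|\vv\|^2$ is). In the Cao--Titi argument this obstruction is removed by first establishing $\widetilde\vv\in L^\infty_tL^6$ (or $L^4$), which \emph{does} close against the basic energy bound precisely because $\nabla\Phi_s$ is absent from the baroclinic equation; one then integrates by parts in the horizontal variables to move the gradient off $\vv$ and controls $\int_\M|\widetilde\vv|\,|\de_p\vv|\,|\nabla\de_p\vv|\,\dd\M$ using the now-available $L^6$ bound on $\widetilde\vv$. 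Swap the order of your second and third steps and the outline becomes correct.

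For comparison, the paper avoids the barotropic/baroclinic splitting altogether: it derives an $L^6$ bound on the full $\vv$ and an $L^2$ bound on $\de_p\vv$ as a \emph{coupled} system, and closes both simultaneously through a pressure estimate $\|\nabla\Phi_s\|_{L^2_tL^{3/2}_{x,y}}$ obtained from the two-dimensional Stokes problem satisfied by the vertical average $M\vv$ (via the Sohr--von Wahl $L^q_tL^p_x$ theory), combined with a short-time partition of $[0,\T]$. The coupling is the price for not splitting; the payoff is that no separate $\widetilde\vv$ equation needs to be written down or estimated.
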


\begin{proof}
	In the following, we will write the norms with respect to the spaces explicitly, that is we write $\norm{\cdot}_{\V}$ denoting the norm on the space $\V$ and $|{\cdot}|$ the absolute value. We also denote by $\norm{\cdot}_{L^p}$ (or $\norm{\cdot}_{L^p(\M)}$) the norm on the space $L^p(\M)$ and by $\norm{\cdot}_{L^p_{x,y}}$ (or $\norm{\cdot}_{L^p(\M')}$) the norm on the space $L^p(\M')$ for $1\leq p\leq \infty$.
	
	Following \cites{KZ07, KZ08}, we first write \eqref{eq6.1.1}$_1$ in component form:
	\begin{equation}\label{eq6.1.4}
		\frac{\de u_k}{\de t} + \sum_{j=1}^3\de_j(u_ju_k)   + \de_k \Phi_s + \A_{\vv} u_k =  S_{k},\quad\quad k=1,2,\\
	\end{equation}
	where 
	$$
	\bs u=(u_1,u_2,u_3)=(\bs v, \omega)=(v_1, v_2, \omega),\quad\quad \bs S_{\vv}=(S_1, S_2).
	$$
	Local existence of strong solutions to \eqref{eq6.1.1}--\eqref{eq6.1.3} is well-known (see e.g.~\cite{PTZ08}) 
	and to prove Theorem~\ref{thm6.1.1} it suffices to show the existence of a constant $\widetilde M>0$ such that $\norm{\bs v(\cdot, t)}_{\V}\leq \widetilde M$ for all $t\in[0,\T]$. 
	
	Let $0\leq\tau_1<\tau_2\leq\T$. In what follows, all the computations are understood for $t\in[\tau_1, \tau_2]$. 
	Before entering into the estimates, we introduce the average operator $M$ in the vertical direction:
	$$
	M\bs v(x,y)=\frac{1}{h}\int_{p_0}^{p_1}\bs v(x,y,p)\dd p,\quad\quad (x,y)\in\M',
	$$
	 where $h:=p_1-p_0$.
	
	We first obtain an $L^6$-estimate on $\bs v$. We multiply \eqref{eq6.1.4} by $u_k^5$, where $k=1,2$, integrate over $\M$, and sum the resulting equations. We arrive at
	\begin{equation}\label{eq6.1.5}
	\begin{split}
		\frac16\sum_{k=1}^2\ddt\norm{u_k}_{L^6}^6 + \frac{5\mu_{\vv}}{9} \sum_{k=1}^2\int_{\M}\abs{\nabla(u_k^3)}^2\dd\M + \frac{5\nu_{\vv}}{9} \sum_{k=1}^2\int_{\M} \left(\frac{gp}{R\overline T}\right)^2\abs{\de_p(u_k)}^3\dd\M \\
		+\alpha_{\vv}\sum_{k=1}^2\int_{\Gamma_i} \left(\frac{gp_1}{R\overline T}\right)^2\abs{u_k}^6\,\dd\Gamma_i
		=-\sum_{k=1}^2\int_{\M} u_k^5 \de_k\Phi_s\dd\M + \sum_{k=1}^2\int_{\M} S_ku_k^5\dd\M\\
		=-h\sum_{k=1}^2\int_{\M'} M(u_k^5) \de_k\Phi_s\dd\M' + \sum_{k=1}^2\int_{\M} S_ku_k^5\dd\M.\\
	\end{split}
	\end{equation}
	As in \cite{KZ07}, the first term on the right-hand side of \eqref{eq6.1.5} can be bounded  using H\"older's inequality and the Sobolev embedding $W^{1,6/5}(\M')\hookrightarrow L^3(\M')$ by
	\begin{equation*}
		C \sum_{k=1}^2\norm{ M(u_k^5) }_{L^3} \norm{\nabla\Phi_s}_{L^{3/2}}
			\leq C\sum_{k=1}^2( \norm{ \nabla M(u_k^5) }_{L^{6/5}}  + \norm{ M(u_k^5) }_{L^{6/5}} )\norm{\nabla\Phi_s}_{L^{3/2}_{x,y}}.
	\end{equation*}
	Now, we estimate
	$$
	\norm{ \nabla M(u_k^5) }_{L^{6/5}} \leq C\norm{u_k^2\nabla(u_k^3)}_{L^{6/5}} \leq CJ^2\bar J^3,
	\quad
	\norm{ M(u_k^5) }_{L^{6/5}}\leq C\norm{u_k}_{L^6}^5\leq C J^5\leq CJ^2\bar J^3,
	$$
	where we denoted 
	$$
	J(t)=\big(\norm{(u_1^3, u_2^3)}_{L^2}^2\big)^{1/6} =\big(\sum_{k=1}^2\norm{u_k}_{L^6}^6\big)^{1/6},
	$$
	and
	\begin{equation*}\begin{split}
			\bar J(t) =& \big(\norm{(u_1^3, u_2^3)}_{\V}^2\big)^{1/6} =\bigg(\sum_{k=1}^2\int_{\M}\abs{\nabla(u_k^3)}^2\dd\M  \\
			 &+\sum_{k=1}^2\int_{\M} \left(\frac{gp}{R\overline T}\right)^2\abs{\de_p(u_k^3)}^2\dd\M 
		+\sum_{k=1}^2\int_{\Gamma_i} \left(\frac{gp_1}{R\overline T}\right)^2\abs{u_k}^6\,\dd\Gamma_i \bigg)^{1/6},
	\end{split}\end{equation*}
	and used the Poincar\'e's inequality, 
	\begin{equation}\label{eq6.1.7}
		J(t)\leq C \bar J(t).
	\end{equation}
	The second term on the right side of \eqref{eq6.1.5} can be estimated by
	\begin{equation*}\begin{split}
\sum_{k=1}^2\norm{S_k}_{L^2}\norm{u_k^5}_{L^2}=\sum_{k=1}^2\norm{S_k}_{L^2}\norm{u_k^3}_{L^{10/3}}^{5/3}\leq \sum_{k=1}^2\norm{S_k}_{L^2}\norm{u_k^3}_{L^2}^{2/3}\norm{u_k^3}_{L^2},
	\end{split}\end{equation*}
	which is further bounded using the Poincar\'e inequality by
	\begin{equation*}
		\sum_{k=1}^2\norm{S_k}_{L^2}\norm{u_k}_{L^6}^{2}\norm{(u_1^3, u_2^3)}_{\V}\leq C \mathbf SJ^2\bar J^3,
	\end{equation*}
	where
	$$
	\mathbf S(t)=\left(\sum_{k=1}^2\norm{S_k(t)}_{L^2}^2\right)^{1/2}.
	$$
	Now, we recall that $\kappa_{\vv}=\min(\mu_{\vv}, \nu_{\vv}, \alpha_{\vv})$ and  deduce from \eqref{eq6.1.5} that
	$$
	\frac16 \ddt J^6 + \frac59 \kappa_{\vv}\bar J^6 \leq C\norm{\nabla\Phi_s}_{L^{3/2}_{x,y}}J^2\bar J^3 + C\mathbf SJ^2\bar J^3,
	$$
	which, by Young's inequality, implies that
	\begin{equation}\label{eq6.1.8}
		\ddt J^6 + \kappa_{\vv}\bar J^6 \leq C\norm{\nabla\Phi_s}^2_{L^{3/2}_{x,y}}J^4 + C\mathbf S^2 J^4.
	\end{equation}
We now set
	$$
	K(t)=\big( \sum_{k=1}^2\norm{\de_pu_k(\cdot, t)}_{L^2}^2\big)^{1/2},\quad\quad
	$$
	and
	\begin{equation*}\begin{split}
		\bar K(t) = \bigg(\sum_{k=1}^2\int_{\M}\abs{\nabla(\de_pu_k)}^2\dd\M + \sum_{k=1}^2\int_{\M} \left(\frac{gp}{R\overline T}\right)^2\abs{\de_{pp}(u_k)}^2\dd\M \\
	+\frac{\mu_{\vv}}{\nu_{\vv}}\sum_{k=1}^2\int_{\Gamma_i} \abs{\nabla u_k}^2\dd\Gamma_i\bigg)^{1/2}.
	\end{split}\end{equation*}
	In order to obtain the estimates for $K$ and $\bar K$, we multiply \eqref{eq6.1.4} by $-\de_{pp}u_k$, where $k=1,2$ integrate over $\M$, and sum the resulting equations together; we find
	\begin{equation}\begin{split}\label{eq6.1.10}
		\frac12\ddt K^2 + \kappa_{\vv}\bar K^2 = \sum_{j,k=1}^2\int_{\M}u_j\de_ju_k\de_{pp}u_k\dd\M
		+\sum_{k=1}^2\int_{\M} u_3\de_pu_k\de_{pp}u_k\dd\M\\
		+\sum_{k=1}^2\int_{\M} \de_k\Phi_s\de_{pp}u_k\dd\M
		-\sum_{k=1}^2\int_{\M} S_k\de_{pp}u_k \dd\M.
	\end{split}\end{equation}
	Integrations by parts on the first two terms on the right-hand side of \eqref{eq6.1.10} yield
	\begin{equation*}\begin{split}
		 \sum_{j,k=1}^2\int_{\M}u_j\de_ju_k\de_{pp}u_k\dd\M
		 =-\sum_{j,k=1}^2\int_{\M} \de_pu_j\de_ju_k\de_pu_k\dd\M -\frac{\alpha_{\vv}}{\nu_{\vv}} \sum_{j,k=1}^2\int_{\Gamma_i} u_j\de_ju_k u_k \dd\Gamma_i \\+
		 \frac12\sum_{j,k=1}^2\int_{\M} \de_ju_j\de_pu_k\de_pu_k\dd\M
		 =\sum_{j,k=1}^2\int_{\M}\de_{pj}u_j u_k \de_pu_k \dd\M
		 + \sum_{j,k=1}^2\int_{\M}\de_{p}u_j u_k \de_{pj}u_k \dd\M\\
		 +\frac{\alpha_{\vv}}{2\nu_{\vv}} \sum_{j,k=1}^2\int_{\Gamma_i} \de_ju_ju_ku_k \dd\Gamma_i 
		 -\sum_{j,k=1}^2\int_{\M} u_j\de_{pj}u_k\de_pu_k\dd\M,\\
	\end{split}\end{equation*}
	and with the divergence free condition on $\bs u$, we have
	\begin{equation*}\begin{split}
		\sum_{k=1}^2\int_{\M} u_3\de_pu_k\de_{pp}u_k\dd\M
		= -\frac12 \sum_{k=1}^2\int_{\M} \de_pu_3\de_pu_k\de_{p}u_k\dd\M\\
		= \frac12\sum_{j,k=1}^2\int_{\M} \de_ju_j\de_pu_k\de_pu_k\dd\M=-\sum_{j,k=1}^2\int_{\M} u_j\de_{pj}u_k\de_pu_k\dd\M.
	\end{split}\end{equation*}
	The right-hand side of \eqref{eq6.1.10} is now less than or equal to
	\begin{equation}\begin{split}\label{eq6.1.11}
		C\sum_{j,k=1}^2\norm{u_k}_{L^6}\norm{\de_pu_k}_{L^3}\norm{\nabla(\de_pu_j)}_{L^2}
		+C\sum_{j,k=1}^2\norm{u_k}_{L^6}\norm{\de_pu_j}_{L^3}\norm{\nabla(\de_pu_k)}_{L^2}\\
		+C\sum_{j,k=1}^2\norm{u_j}_{L^6}\norm{\de_pu_k}_{L^3}\norm{\nabla(\de_pu_k)}_{L^2}
		+\sum_{k=1}^2\norm{S_k}_{L^2}\norm{\de_{pp}u_k}_{L^2}\\
		+\sum_{k=1}^2\left|\int_{\M} \de_k\Phi_s\de_{pp}u_k \dd\M \right|
		+\frac{\alpha_{\vv}}{2\nu_{\vv}} \sum_{j,k=1}^2\left|\int_{\Gamma_i} \de_ju_ju_ku_k  \dd\Gamma_i \right|.
	\end{split}\end{equation}
	The first three terms in \eqref{eq6.1.11} are bounded by $CJK^{1/2}\bar K^{3/2}$ and the fourth term is bounded by $C\mathbf S\bar K$. We rewrite the fifth term in \eqref{eq6.1.11} as
	\begin{equation}\begin{split}
		\sum_{k=1}^2\left|\int_{\M'} \de_k\Phi_s\int_{p_0}^{p_1}\de_{pp}u_k \dd p\,\dd\M' \right|
		=\sum_{k=1}^2\left|\int_{\M'} \de_k\Phi_s (\de_{p}u_k)\big|_{p=p_0}^{p=p_1} \dd\M' \right|\\
		=\frac{\alpha_{\vv}}{\nu_{\vv}}\sum_{k=1}^2\left|\int_{\M'} \de_k\Phi_s u_k\big|_{p=p_1} \dd\M' \right|
		\leq \frac{\alpha_{\vv}}{\nu_{\vv}}\sum_{k=1}^2 \norm{\de_k\Phi_s}_{L^{3/2}_{x,y}} \norm{ u_k\big|_{p=p_1} }_{L^3_{x,y}},
	\end{split}\end{equation}
	and we have by the trace theorem
	$$
	\norm{ u_k\big|_{p=p_1} }_{L^3_{x,y}} \leq C\norm{ u_k\big|_{p=p_1} }_{L_{x,y}^4}
	\leq C\norm{ u_k\big|_{p=p_1} }_{H_{x,y}^{1/2}}
	\leq C\norm{ u_k }_{H^1(\M)} \leq C\bar E.
	$$
	where
	$$
	\bar E(t) =\norm{\bs v(\cdot, t) }_{H^1(\M)} = \bigg(\sum_{k=1}^2 \norm{ u_k(\cdot, t) }_{H^1(\M)}^2 \bigg)^{1/2} .
	$$
	By H\"older's and Young's inequalities, the last term in \eqref{eq6.1.11} is estimated by
	$$
		C\sum_{j,k=1}^2\norm{\nabla u_j}_{L^2(\Gamma_i)}\norm{u_k^2}_{L^3(\Gamma_i)}
		\leq \frac{\kappa_{\vv}}8\bar K^2 + \frac{\kappa_{\vv}}8 \bar J^6 + C.
	$$
	Now, we can deduce from \eqref{eq6.1.10} that
	\begin{equation*}\begin{split}
		\frac12\ddt K^2 + \kappa_{\vv}\bar K^2 \leq CJK^{1/2}\bar K^{3/2} + C\norm{\nabla\Phi_s}_{L^{3/2}_{x,y}}\bar E + C\mathbf S\bar K + \frac{\kappa_{\vv}}8\bar K^2 + \frac{\kappa_{\vv}}8 \bar J^6 + C,
	\end{split}\end{equation*}
	whence by noticing that $K(t)\leq \bar E(t)$,
	\begin{equation}\label{eq6.1.13}
		\ddt K^2 + \kappa_{\vv}\bar K^2 \leq C\bar E^2J^4 + C\norm{\nabla\Phi_s}_{L^{3/2}_{x,y}}^2 + \bar E^2 + C\mathbf S^2 +  \frac{\kappa_{\vv}}8 \bar J^6 + C.
	\end{equation}
	
	Next, we need an estimate on $\norm{\nabla\Phi_s}_{L^2_tL^{3/2}_{x,y}}=\norm{\nabla\Phi_s}_{L^2_tL^{3/2}_{x,y}([\tau_1,\tau_2]\times\M')}$. For this purpose, we average the primitive equations in the vertical direction to obtain
	\begin{equation}\begin{split}
		&\frac{\de Mu_k }{\de t} - \mu_{\vv}\Delta Mu_k + \de_k \Phi_s = -\frac{\alpha_{\vv}}{p_1-p_0}\bigg(\frac{gp_1}{R\overline T}\bigg)^2 u_k\big|_{p=p_1},\\
		&\hspace{150pt}-\sum_{k=1}^2M(\de_j(u_ju_k)) + MS_k,\quad\quad k=1,2,\\
		&\de_1Mu_1 + \de_2Mu_2 = 0.
	\end{split}\end{equation}
	This is a linear two-dimensional Stokes problem for $(M\bs v, \Phi_s)$ with initial datum
	$$
	M\bs v(\cdot, t)\big|_{t=\tau_1} = M\bs v(\cdot, \tau_1).
	$$
	By the $L_t^qL_{x,y}^p$ estimates for the Stokes problem due to Sohr and von Wahl \cite{SVW86}, we have
	\begin{equation*}\begin{split}
		\norm{\nabla\Phi_s}_{L^2_tL^{3/2}_{x,y}} \leq C\norm{ u_k\big|_{p=p_1} }_{L^2_tL^{3/2}_{x,y}} + C\sum_{j,k=1}^2\norm{ M(\de_j(u_ju_k))}_{L^2_tL^{3/2}_{x,y}}\\
		+C\sum_{k=1}^2\norm{MS_k }_{L^2_tL^{3/2}_{x,y}}
		+C\sum_{k=1}^2\norm{ \nabla u_k(\cdot,\tau_1)}_{L^{2}}\\
		\leq C\norm{ u_k\big|_{p=p_1} }_{L^2_tL^{3/2}_{x,y}}
		+C\sum_{j,k=1}^2\norm{ u_j\de_ju_k }_{L^2_tL^{3/2}(\M)}\\
		+C\sum_{k=1}^2\norm{S_k }_{L^2_tL^{2}(\M)}
		+C\norm{ \nabla\bs v(\cdot,\tau_1)}_{L^2(\M)}
		=:I_1+ I_2 + I_3 +I_4.
	\end{split}\end{equation*}
	As before, by the trace theorem, the first term $I_1$ is estimated as
	$$
	I_1 \leq C\norm{ \bs v\big|_{p=p_1} }_{L^2_tL^{4}_{x,y}}
	\leq C\norm{ \norm{\bs v}_{H^1(\M)} }_{L^2_t}
	\leq C\norm{ \bar E }_{L^2_t}
	$$
	The second term $I_2$ is estimated as
	$$
	I_2 \leq C \sum_{j,k=1}^2\norm{ \norm{u_j}_{L^6}\norm{\de_ju_k}_{L^2} }_{L^2_t}
	\leq C\norm{ J\bar E }_{L^2_t}.
	$$
	Collecting the estimates for $I_1$ and $I_2$, we obtain
	\begin{equation}\label{eq6.1.15}
		\norm{\nabla\Phi_s}_{L^2_tL^{3/2}_{x,y}}^2
		\leq C\norm{ \bar E }_{L^2_t}^2 + C\norm{ J\bar E }_{L^2_t}^2 + C\norm{\mathbf S}_{L^2_t}^2 + C\norm{ \nabla \bs v(\cdot,\tau_1)}_{L^2}^2.
	\end{equation}
	Now, considering $0\leq\tau_1<\tau_2<\tau_3$ and integrating \eqref{eq6.1.8} on $[\tau_1, \tau_2]$ yields
	$$
	J(\tau_2)^6 + \kappa_{\vv}\norm{ \bar J^3 }_{L_t^2(\tau_1, \tau_2)}^2
	\leq J(\tau_1)^6 + \norm{\nabla\Phi_s}_{L^2_tL^{3/2}_{x,y}}^2\sup_{\tau_1\leq\tau\leq\tau_3}J(\tau)^4
	+C\norm{\mathbf S}_{L_t^2}^2\sup_{\tau_1\leq\tau\leq\tau_3}J(\tau)^4,
	$$
	where the $L_t^2$-norms are taken over $[\tau_1, \tau_3]$. Implementing the pressure estimate \eqref{eq6.1.15}, we obtain
	\begin{equation*}\begin{split}
		J(\tau_2)^6 + \kappa_{\vv}\norm{ \bar J^3 }_{L_t^2(\tau_1, \tau_2)}^2
			\leq J(\tau_1)^6 + C\norm{ \bar E }_{L^2_t}^2\sup_{\tau_1\leq\tau\leq\tau_3}J(\tau)^4
			+C\norm{ \bar E }_{L^2_t}^2\sup_{\tau_1\leq\tau\leq\tau_3}J(\tau)^6\\
			+C\big( \norm{\mathbf S}_{L_t^2}^2 + \norm{ \nabla \bs v(\cdot,\tau_1)}_{L^{2}}^2 \big)\sup_{\tau_1\leq\tau\leq\tau_3}J(\tau)^4,
	\end{split}\end{equation*}
	which is valid for all $\tau_2\in[\tau_1, \tau_3]$. Hence, taking $\sup_{\tau_1\leq\tau\leq\tau_3}$ on both sides and using 
	the Cauchy-Schwarz inequality to absorb the terms involving $\sup_{\tau_1\leq\tau\leq\tau_3}J(\tau)^4$ into the left-hand 
	side, we obtain
	\begin{equation}\begin{split}\label{eq6.1.16}
		\sup_{\tau_1\leq \tau\leq\tau_3}J(\tau)^6+ \kappa_{\vv}\norm{ \bar J^3 }_{L_t^2(\tau_1, \tau_3)}^2
			\leq 2J(\tau_1)^6 + C\norm{ \bar E }_{L^2_t}^4
			+C\norm{ \bar E }_{L^2_t}^2\sup_{\tau_1\leq\tau\leq\tau_3}J(\tau)^6\\
			+C\big( \norm{\mathbf S}_{L_t^2}^4 + \norm{ \nabla \bs v(\cdot,\tau_1)}_{L^{2}}^4 \big).
	\end{split}\end{equation}
	Similarly, we integrate \eqref{eq6.1.13} on $[\tau_1, \tau_2]$ and use the pressure estimate \eqref{eq6.1.15} to find
	\begin{equation*}\begin{split}
		K(\tau_2)^2 &+ \kappa_{\vv}\norm{\bar K}_{L_t^2(\tau_1, \tau_2)}^2
		\leq K(\tau_1)^2 + C\norm{\bar E}_{L_t^2}^2 \sup_{\tau_1\leq\tau\leq\tau_3}J(\tau)^4
		+ C\norm{ \bar E }_{L^2_t}^2 \\
		&+ C\norm{ \bar E }_{L^2_t}^2\sup_{\tau_1\leq\tau\leq\tau_3}J(\tau)^2+\frac{\kappa_{\vv}}8 \norm{ \bar J^3 }_{L_t^2}^2
		+C\big( \norm{\mathbf S}_{L_t^2}^2 + \norm{ \nabla \bs v(\cdot,\tau_1)}_{L^{2}}^2 +1 \big).
	\end{split}\end{equation*}
	Therefore, 
	\begin{equation}\begin{split}\label{eq6.1.17}
		\sup_{\tau_1\leq \tau\leq\tau_3}&K(\tau)^2 + \kappa_{\vv}\norm{\bar K}_{L_t^2(\tau_1, \tau_3)}^2
		\leq K(\tau_1)^2 + C\norm{\bar E}_{L_t^2}^2 \sup_{\tau_1\leq\tau\leq\tau_3}J(\tau)^4
		+ C\norm{ \bar E }_{L^2_t}^2 \\
		&+ C\norm{ \bar E }_{L^2_t}^2\sup_{\tau_1\leq\tau\leq\tau_3}J(\tau)^2+\frac{\kappa_{\vv}}8 \norm{ \bar J^3 }_{L_t^2}^2
		+C\big( \norm{\mathbf S}_{L_t^2}^2 + \norm{ \nabla \bs v(\cdot,\tau_1)}_{L^{2}}^2 +1 \big).
	\end{split}\end{equation}	
	Now, choose $\delta>0$ such that
	\begin{equation}\label{eq6.1.18}
		\norm{\bar E}_{L_t^2(t, t+2\delta)}^2\leq \frac1\gamma,\quad\quad\forall\,t \in[0, \T],
	\end{equation}
	where $\gamma$ is a sufficiently large constant to be determined later. Let $t^0=0$ and then for $j=1,\cdots,l$, choose $t^j\in(j\delta, (j+1)\delta)$ such that
	\begin{equation}\label{eq6.1.19}
		\norm{\bs v(\cdot, t^j)}_{H^1(\M)}^2 \leq \frac1\delta\int_{j\delta}^{(j+1)\delta} \norm{\bs v(\cdot, \tau)}_{H^1(\M)}^2\dd\tau \leq \frac{1}{\delta \gamma},
	\end{equation}
	where $l$ is the largest integer such that $(l+1)\delta\leq \T$. Let $t^{l+1}=\T$. Note that 
	\begin{equation}
		t^{j+1}-t^j\leq 2\delta,\quad\quad\forall j=0,\ldots,l.
	\end{equation}
	Summing the inequalities \eqref{eq6.1.16} and \eqref{eq6.1.17} with $\tau_1=t^j$ and $\tau_3=t^{j+1}$ implies that
	\begin{equation*}\begin{split}
		\sup_{t^j\leq t\leq t^{j+1}}J(t)^6+ \frac{7\kappa_{\vv}}8\norm{ \bar J^3 }_{L_t^2}^2
		+\sup_{t^j\leq t\leq t^{j+1}}K(t)^2 + \kappa_{\vv}\norm{\bar K}_{L_t^2}^2
		\leq 2J(t^j)^6 + K(t^j)^2
		+C\norm{ \bar E }_{L^2_t}^4\\
			+C\norm{ \bar E }_{L^2_t}^2\sup_{\tau_1\leq\tau\leq\tau_3}J(\tau)^6
			+C\norm{\bar E}_{L_t^2}^2 \sup_{\tau_1\leq\tau\leq\tau_3}J(\tau)^4
		+ C\norm{ \bar E }_{L^2_t}^2 
		+ C\norm{ \bar E }_{L^2_t}^2\sup_{\tau_1\leq\tau\leq\tau_3}J(\tau)^2\\
		+C\big( \norm{\mathbf S}_{L_t^2}^4 + \norm{ \nabla \bs v(\cdot,t^j)}_{L^2}^4 \big)+C\big( \norm{\mathbf S}_{L_t^2}^2 + \norm{ \nabla \bs v(\cdot,t^j)}_{L^{2}}^2 +1 \big),
	\end{split}\end{equation*}
	which, by using Young's inequality and absorbing $\sup_{\tau_1\leq\tau\leq\tau_3}J(\tau)^4$ and $\sup_{\tau_1\leq\tau\leq\tau_3}J(\tau)^2$ into the left-hand side, yields
	\begin{equation*}\begin{split}
		\sup_{t^j\leq t\leq t^{j+1}}J(t)^6
		+\sup_{t^j\leq t\leq t^{j+1}}K(t)^2 + \kappa_{\vv}\norm{\bar K}_{L_t^2}^2
		\leq 4J(t^j)^6 + 2K(t^j)^2
		+C\norm{ \bar E }_{L^2_t}^4\\
			+C\norm{ \bar E }_{L^2_t}^2\sup_{\tau_1\leq\tau\leq\tau_3}J(\tau)^6
			+C\big( \norm{\mathbf S}_{L_t^2}^4 + \norm{ \nabla \bs v(\cdot,t^j)}_{L^{2}}^4 +1\big).
	\end{split}\end{equation*}
	Using \eqref{eq6.1.18} and \eqref{eq6.1.19} and 
	$$
	\max\left\{J(t^j)^2,K(t^j)^2\right\} \leq C\bar E(t^j)^2=C\norm{ \bs v(\cdot, t^j) }_{H^1(\M)}^2 \leq \frac{C}{\delta\gamma},
	$$
	we obtain
	\begin{equation}\begin{split}\label{eq6.1.20}
		\sup_{t^j\leq t\leq t^{j+1}}J(t)^6
		+\sup_{t^j\leq t\leq t^{j+1}}K(t)^2 + \kappa_{\vv}\norm{\bar K}_{L_t^2}^2
		\leq \frac{C}{\delta^3\gamma^3} + \frac{C}{\delta\gamma} \\
		+ \frac{C}{\gamma^2}+\frac{C}{\gamma}\sup_{\tau_1\leq\tau\leq\tau_3}J(\tau)^6
		+C\big( \norm{\mathbf S}_{L_t^2}^4 + \frac{C}{\delta^2\gamma^2} +1\big).
	\end{split}\end{equation}
	Therefore, choosing $\gamma>0$ large enough to absorb the fourth term in in the right-hand side of \eqref{eq6.1.20}, we eventually find that
	\begin{equation}\label{eq6.1.21}
		J(t),\, K(t),\, \norm{\bar K}_{L^2(0,\T)} \leq \widetilde M,\quad\quad t\in[0,\T],
	\end{equation}
	where 
	$$
	\widetilde M:=\biggl(\frac{C}{\delta^3\gamma^3} + \frac{C}{\delta\gamma} + \frac{C}{\gamma^2}
		+C\big( \norm{\mathbf S}_{L_t^2}^4 + \frac{C}{\delta^2\gamma^2} +1\big)\biggr)\left(\frac1{\kappa_{\vv}} + 1\right).
	$$
	We are now in position to bound $\norm{\bs v(\cdot,t)}_{\V}$ for all $t\in[0,\T]$ with a constant independent of $\T$. Taking the inner product of each
	side of \eqref{eq6.1.1} with $A_{\vv}\vv$ leads to
	\begin{equation*}\begin{split}
		\frac12\ddt\norm{A_{\vv}^{1/2} \vv }_{L^2}^2 + \norm{ A_{\vv}\vv}_{L^2}^2
		&\leq \sum_{k=1}^2\norm{u_j\de_j\vv}_{L^2}\norm{ A_{\vv}\vv}_{L^2}+\norm{u_3\de_{p}\bs v}_{L^2}\norm{ A_{\vv}\vv}_{L^2}\\ 
		&\quad+ \norm{S_{\vv}}_{L^2}\norm{ A_{\vv}\vv}_{L^2},
	\end{split}\end{equation*}
	which implies that
	\begin{equation}\begin{split}
		\ddt\norm{A_{\vv}^{1/2} \vv }_{L^2}^2 &+ \norm{ A_{\vv}\vv}_{L^2}^2
		\leq \sum_{k=1}^2\norm{u_j\de_j\vv}_{L^2}^2+\norm{u_3\de_{p}\bs v}_{L^2}^2 + C\norm{S_{\vv}}_{L^2}^2\\
		&\leq CJ^2\norm{A_{\vv}^{1/2} \vv }_{L^2}\norm{ A_{\vv}\vv}_{L^2} + CK\bar K\norm{A_{\vv}^{1/2} \vv }_{L^2}\norm{ A_{\vv}\vv}_{L^2} + C\norm{S_{\vv}}_{L^2}^2,
	\end{split}\end{equation}
	where we used the anisotropic estimates for the second term in the right-hand side. Therefore,
	$$
	\ddt\norm{A_{\vv}^{1/2} \vv }_{L^2}^2 + \norm{ A_{\vv}\vv}_{L^2}^2
		\leq CJ^4\norm{A_{\vv}^{1/2} \vv }_{L^2}^2 + CK^2\bar K^2\norm{A_{\vv}^{1/2} \vv }^2_{L^2} + C\norm{S_{\vv}}_{L^2}^2,
	$$
and the uniform bound  of $\norm{\bs v(t)}_{\V}$ follows from the Gronwall lemma and the estimate \eqref{eq6.1.21}. Hence, we proved the global existence of Theorem~\ref{thm6.1.1}. The uniqueness follows similarly as in \cite[pp.~2748]{KZ07} and since it is not important for our goal here, we thus omit the details. This ends the proof of Theorem~\ref{thm6.1.1}.
\end{proof}

\section*{Acknowledgments}
\noindent MCZ thanks Professor C.G. Gal for suggesting the use of the De Giorgi technique for the maximum
principle results in Section \ref{sec:maxprinc}.
MCZ, AH, and RT were partially supported by the National Science Foundation 
under the grant NSF DMS-1206438 and by the Research Fund of Indiana University,
IK was supported in part by the NSF grant DMS-1311943, while MZ was supported
in part by the NSF grant DMS-1109562.

\begin{bibdiv}
\begin{biblist}

\bib{AG}{article}{
   author={Az{\'e}rad, Pascal},
   author={Guill{\'e}n, Francisco},
   title={Mathematical justification of the hydrostatic approximation in the
   primitive equations of geophysical fluid dynamics},
   journal={SIAM J. Math. Anal.},
   volume={33},
   date={2001},
   pages={847--859},
}

\bib{B}{book}{
   author={Barbu, Viorel},
   title={Nonlinear differential equations of monotone types in Banach
   spaces},
   publisher={Springer},
   place={New York},
   date={2010},
}

\bib{BCT}{article}{
   author={Bousquet, Arthur},
   author={Coti Zelati, Michele},
   author={Temam, Roger},
   title={Phase transition models in atmospheric dynamics},
   journal={Milan Journal of Mathematics},
   volume={82},
   date={2014},
   pages={99--128},
}

\bib{CT06}{article}{
   author={Cao, Chongsheng},
   author={Titi, Edriss S.},
   title={Global well-posedness of the three-dimensional viscous primitive
   equations of large scale ocean and atmosphere dynamics},
   journal={Ann. of Math. (2)},
   volume={166},
   date={2007},
   pages={245--267},
}

\bib{CFTT}{article}{
   author={Coti Zelati, Michele},
   author={Fr{\'e}mond, Michel},
   author={Temam, Roger},
   author={Tribbia, Joseph},
   title={The equations of the atmosphere with humidity and saturation: uniqueness and physical bounds},
   journal={Phys. D},
   volume={264},
   date={2013},
   pages={49--65},
}

\bib{CT12}{article}{
   author={Coti Zelati, Michele},
   author={Temam, Roger},
   title={The atmospheric equation of water vapor with saturation},
   journal={Boll. Unione Mat. Ital. (9)},
   volume={5},
   date={2012},
   pages={309--336},
}

\bib{DG}{article}{
   author={De Giorgi, Ennio},
   title={Sulla differenziabilit\`a e l'analiticit\`a delle estremali degli integrali multipli regolari},
   language={Italian},
   journal={Mem. Accad. Sci. Torino. Cl. Sci. Fis. Mat. Nat. (3)},
   volume={3},
   date={1957},
   pages={25--43},
}

\bib{ET76}{book}{
   author={Ekeland, Ivar},
   author={Temam, Roger},
   title={Convex analysis and variational problems},
   note={Translated from the French;
   Studies in Mathematics and its Applications, Vol. 1},
   publisher={North-Holland Publishing Co.},
   place={Amsterdam},
   date={1976},
}

\bib{ET01}{article}{
   author={Ewald, Brian D.},
   author={Temam, Roger},
   title={Maximum principles for the primitive equations of the atmosphere},
   journal={Discrete Contin. Dynam. Systems},
   volume={7},
   date={2001},
   pages={343--362},
}

\bib{G}{book}{
   author={Gill, A.E.},
   title={Atmosphere-ocean dynamics},
   publisher={International Geophysics Series, Vol. 30, Academic Press},
   place={San Diego},
   date={1982},
}

\bib{GMR}{article}{
   author={Guill{\'e}n-Gonz{\'a}lez, F.},
   author={Masmoudi, N.},
   author={Rodr{\'{\i}}guez-Bellido, M. A.},
   title={Anisotropic estimates and strong solutions of the primitive
   equations},
   journal={Differential Integral Equations},
   volume={14},
   date={2001},
   pages={1381--1408},
}	

\bib{GH1}{article}{
   author={Guo, Boling},
   author={Huang, Daiwen},
   title={Existence of weak solutions and trajectory attractors for the
   moist atmospheric equations in geophysics},
   journal={J. Math. Phys.},
   volume={47},
   date={2006},
   pages={083508, 23},
}

\bib{GH2}{article}{
   author={Guo, Boling},
   author={Huang, Daiwen},
   title={Existence of the universal attractor for the 3-D viscous primitive
   equations of large-scale moist atmosphere},
   journal={J. Differential Equations},
   volume={251},
   date={2011},
   pages={457--491},
}

\bib{Hal71}{book}{
   author={Haltiner, G.J.},
   title={Numerical weather prediction},
   publisher={John Wiley \& Sons},
   place={New York},
   date={1971},
}

\bib{HW80}{book}{
   author={Haltiner, G.J.},
   author={Williams, R.T.},
   title={Numerical prediction and dynamic meteorology},
   publisher={John Wiley \& Sons},
   place={New York},
   date={1980},
}

\bib{KS}{book}{
   author={Kinderlehrer, David},
   author={Stampacchia, Guido},
   title={An introduction to variational inequalities and their
   applications},
   publisher={Academic Press Inc.},
   place={New York},
   date={1980},
}

\bib{Kob1}{article}{
   author={Kobelkov, Georgij M.},
   title={Existence of a solution `in the large' for the 3D large-scale
   ocean dynamics equations},
   language={English, with English and French summaries},
   journal={C. R. Math. Acad. Sci. Paris},
   volume={343},
   date={2006},
   pages={283--286},
}

\bib{Kob2}{article}{
   author={Kobelkov, Georgy M.},
   title={Existence of a solution ``in the large'' for ocean dynamics
   equations},
   journal={J. Math. Fluid Mech.},
   volume={9},
   date={2007},
   pages={588--610},
}

\bib{KPRZ}{article}{
   author={I. Kukavica}, 
   author={Y. Pei},
   author={W. Rusin}, 
   author={M. Ziane},
   title={Primitive equations with continuous initial data},
   journal={Nonlinearity},
   volume={27},
   date={2014},
   pages={1135--1155},
}

\bib{KZ07}{article}{
   author={Kukavica, Igor},
   author={Ziane, Mohammed},
   title={On the regularity of the primitive equations of the ocean},
   journal={Nonlinearity},
   volume={20},
   date={2007},
   pages={2739--2753},
}

\bib{KZ08}{article}{
   author={Kukavica, Igor},
   author={Ziane, Mohammed},
   title={Uniform gradient bounds for the primitive equations of the ocean},
   journal={Differential Integral Equations},
   volume={21},
   date={2008},
   pages={837--849},
}

\bib{Lio69}{book}{
   author={Lions, J.-L.},
   title={Quelques m\'ethodes de r\'esolution des probl\`emes aux limites
   non lin\'eaires},
   publisher={Dunod},
   date={1969},
}

\bib{LTW92a}{article}{
   author={Lions, Jacques-Louis},
   author={Temam, Roger},
   author={Wang, ShouHong},
   title={New formulations of the primitive equations of atmosphere and
   applications},
   journal={Nonlinearity},
   volume={5},
   date={1992},
   pages={237--288},
}

\bib{LTW92b}{article}{
   author={Lions, Jacques-Louis},
   author={Temam, Roger},
   author={Wang, Shou Hong},
   title={On the equations of the large-scale ocean},
   journal={Nonlinearity},
   volume={5},
   date={1992},
   pages={1007--1053},
}

\bib{LTW93}{article}{
   author={Lions, J.-L.},
   author={Temam, R.},
   author={Wang, S.},
   title={Models for the coupled atmosphere and ocean. (CAO I,II)},
   journal={Comput. Mech. Adv.},
   volume={1},
   date={1993},
   pages={5--119},
}

\bib{LTW95}{article}{
   author={Lions, Jacques-Louis},
   author={Temam, Roger},
   author={Wang, Shou Hong},
   title={Mathematical theory for the coupled atmosphere-ocean models. (CAO III)},
   journal={J. Math. Pures Appl. (9)},
   volume={74},
   date={1995},
   pages={105--163},
}

\bib{TM05}{book}{
   author={Miranville, Alain},
   author={Temam, Roger},
   title={Mathematical modeling in continuum mechanics},
   publisher={Cambridge University Press},
   place={Cambridge},
   date={2005},
}

\bib{Ped87}{book}{
   author={Pedlosky, J.},
   title={Geophysical Fluid Dynamics},
   publisher={Springer-Verlag},
   place={New York},
   date={1987},
}

\bib{PTZ08}{article}{
   author={Petcu, Madalina},
   author={Temam, Roger M.},
   author={Ziane, Mohammed},
   title={Some mathematical problems in geophysical fluid dynamics},
   conference={
      title={Handbook of numerical analysis. Vol. XIV. Special volume:
      computational methods for the atmosphere and the oceans},
   },
   book={
      volume={14},
      publisher={Elsevier/North-Holland},
      place={Amsterdam}
   },
   date={2009},
   pages={577--750},
}

\bib{Ric88}{book}{
   author={Richardson, Lewis F.},
   title={Weather prediction by numerical process},
   series={Cambridge Mathematical Library},
   publisher={Cambridge University Press},
   place={Cambridge},
   date={2007},
}

\bib{R}{book}{
   author={Rockafellar, R. Tyrrell},
   title={Convex analysis},
   publisher={Princeton University Press},
   place={Princeton, NJ},
   date={1997},
}

\bib{RY89}{book}{
   author={Rogers, R.R.},
   author={Yau, M.K.},
   title={A Short Course in Cloud Physics},
   publisher={Butterworth-Heinemann},
   date={1989},
}

\bib{S01}{book}{
   author={Sohr, Hermann},
   title={The Navier-Stokes equations},
   publisher={Birkh\"auser Verlag},
   place={Basel},
   date={2001},
}

\bib{SVW86}{article}{
   author={Sohr, Hermann},
   author={von Wahl, Wolf},
   title={On the regularity of the pressure of weak solutions of Navier-Stokes equations},
   journal={Arch. Math. (Basel)},
   volume={46},
   date={1986},
   pages={428--439},
}

\bib{T95}{book}{
   author={Temam, Roger},
   title={Navier-Stokes equations and nonlinear functional analysis},
   publisher={SIAM},
   place={Philadelphia},
   date={1995},
}

\bib{Tem97}{book}{
   author={Temam, Roger},
   title={Infinite-dimensional dynamical systems in mechanics and physics},
   publisher={Springer-Verlag},
   place={New York},
   date={1997},
}

\bib{T01}{book}{
   author={Temam, Roger},
   title={Navier-Stokes equations, theory and numerical analysis},
   publisher={AMS Chelsea Publishing},
   place={Providence},
   date={2001},
}

\bib{TZ04}{article}{
   author={Temam, Roger},
   author={Ziane, Mohammed},
   title={Some mathematical problems in geophysical fluid dynamics},
   conference={
      title={Handbook of mathematical fluid dynamics. Vol. III},
   },
   book={
      publisher={North-Holland},
      place={Amsterdam},
   },
   date={2004},
   pages={535--657},
}

\bib{TS82}{book}{
   author={Turcotte, D. L.},
   author={Schubert, G.},   
   title={Geodynamics: Applications of Continuum Physics to Geological Problems},
   publisher={John Wiley and Sons},
   place={New York},
   date={1982},
}

\end{biblist}
\end{bibdiv}

\end{document}